\renewcommand{\epsilon}{\varepsilon}
\numberwithin{equation}{section}
\newtheoremstyle{thmlemcorr}{10pt}{10pt}{\itshape}{}{\bfseries}{.}{10pt}{{\thmname{#1}\thmnumber{ #2}\thmnote{ (#3)}}}
\newtheoremstyle{thmlemcorr*}{10pt}{10pt}{\itshape}{}{\bfseries}{.}\newline{{\thmname{#1}\thmnumber{ #2}\thmnote{ (#3)}}}
\newtheoremstyle{defi}{10pt}{10pt}{\itshape}{}{\bfseries}{.}{10pt}{{\thmname{#1}\thmnumber{ #2}\thmnote{ (#3)}}}
\newtheoremstyle{remexample}{10pt}{10pt}{}{}{\bfseries}{.}{10pt}{{\thmname{#1}\thmnumber{ #2}\thmnote{ (#3)}}}
\newtheoremstyle{ass}{10pt}{10pt}{}{}{\bfseries}{.}{10pt}{{\thmname{#1}\thmnumber{ A#2}\thmnote{ (#3)}}}
\theoremstyle{thmlemcorr}
\newtheorem{theorem}{Theorem}
\numberwithin{theorem}{section}
\newtheorem{lemma}[theorem]{Lemma}
\newtheorem{corollary}[theorem]{Corollary}
\newtheorem{proposition}[theorem]{Proposition}
\theoremstyle{thmlemcorr*}
\newtheorem{theorem*}{Theorem}
\newtheorem{lemma*}[theorem]{Lemma}
\newtheorem{corollary*}[theorem]{Corollary}
\newtheorem{proposition*}[theorem]{Proposition}
\newtheorem{problem*}[theorem]{Problem}
\newtheorem{conjecture*}[theorem]{Conjecture}
\theoremstyle{defi}
\newtheorem{definition}[theorem]{Definition}
\theoremstyle{remexample}
\newenvironment{remark}
  {\pushQED{\qed}\remarkx}
  {\popQED\endremarkx}
\theoremstyle{ass}
\newcommand{\Fcal}{\mathcal{F}}
\DeclareMathOperator{\esssup}{ess\,sup}
\newcommand{\normb}[1]{\bigl\|#1\bigr\|}
\newcommand{\N}{\mathbb{N}}
\newcommand{\R}{\mathbb{R}}
\newcommand{\weakly}{\rightharpoonup}
\newcommand{\weaklystar}{\overset{*}\rightharpoonup}
\newcommand{\eps}{\epsilon}
\newcommand{\phirho}{\varphi^{\rho}}
\newcommand{\phirhotilde}{\tilde{\varphi}^{\rho}}
\def\Xint#1{\mathchoice
{\XXint\displaystyle\textstyle{#1}}%
{\XXint\textstyle\scriptstyle{#1}}%
{\XXint\scriptstyle\scriptscriptstyle{#1}}%
{\XXint\scriptscriptstyle\scriptscriptstyle{#1}}%
\!\int}
\def\XXint#1#2#3{{\setbox0=\hbox{$#1{#2#3}{\int}$}
\vcenter{\hbox{$#2#3$}}\kern-.5\wd0}}
\def\dashint{\,\Xint-}
\DeclarePairedDelimiter\abs{\lvert}{\rvert}
\DeclarePairedDelimiter{\norm}{\lVert}{\rVert}
\newcommand{\Rn}{\R^{n}}
\renewcommand{\phi}{\varphi}
\newcommand{\dx}{\, dx}
\newcommand{\dy}{\, dy}
\newcommand{\F}{\mathcal{F}}
\newcommand{\Ccr}{C_{c}^{\infty}(\R^n)}
\newcommand{\Sinf}{S^{\alpha,\infty}}
\newcommand{\Sp}{S^{\alpha,p}}
\newcommand{\Spo}{S^{\alpha,p}_0}
\newcommand{\Spg}{S^{\alpha,p}_g}
\newcommand{\starto}{\stackrel{*}{\rightharpoonup}}
\newcommand{\weakto}{\rightharpoonup}
\newcommand{\na}{\nabla^{\alpha}}
\newcommand{\da}[1]{(-\Delta)^{#1}}
\newcommand{\nanl}{\nabla^{\alpha}_{\normalfont{\text{NL}}}}
\def\Xint#1{\mathchoice
   {\XXint\displaystyle\textstyle{#1}}%
   {\XXint\textstyle\scriptstyle{#1}}%
   {\XXint\scriptstyle\scriptscriptstyle{#1}}%
   {\XXint\scriptscriptstyle\scriptscriptstyle{#1}}%
   \!\int}
\def\XXint#1#2#3{{\setbox0=\hbox{$#1{#2#3}{\int}$}
     \vcenter{\hbox{$#2#3$}}\kern-.5\wd0}}
\def\dashint{\Xint-}
\newcommand{\dac}{(-\Delta)^{\frac{1-\alpha}{2}}}
\newcommand{\Spmn}{S^{\alpha,p}(\R^n;\R^m)}
\newcommand{\Rmn}{\mathbb{R}^{m \times n}}
\newcommand{\Spom}{S^{\alpha,p}_{0}(\Omega;\R^m)}
\newcommand{\Spgm}{S^{\alpha,p}_{g}(\Omega;\R^m)}
\newcommand{\Sinfperm}{S^{\alpha,\infty}_{per}(Q;\R^m)}
\newcommand{\Winfperm}{W^{1,\infty}_{per}(Q;\R^m)}
\newcommand{\Sinfper}{S^{\alpha,\infty}_{per}(Q)}
\newcommand{\Winfper}{W^{1,\infty}_{per}(Q)}
\newcommand{\Frel}{\mathcal{F}^{\mathrm{rel}}_{\alpha}}
\newcommand{\Lipb}{\mathrm{Lip}_{b}}
\newcommand{\Lip}{\mathrm{Lip}}
\g@addto@macro\bfseries{\boldmath}
\title[]{Quasiconvexity in the fractional calculus of variations: Characterization of lower semicontinuity and relaxation}
\author{Carolin Kreisbeck}
\address{Mathematisch-Geographische Fakult\"at, Katholische Universit\"at Eichst\"att-Ingolstadt, Osten\-stra{\ss}e 28, 85072 Eichst\"att}
\email{carolin.kreisbeck@ku.de}
\author{Hidde Sch\"{o}nberger}
\address{Mathematical Institute, Utrecht University, Postbus 80010, 3508 TA Utrecht, The Netherlands}
\email{h.m.j.schonberger@students.uu.nl}
\begin{document}


\maketitle

\thispagestyle{empty}
\begin{abstract} 
Based on recent developments in the theory of fractional Sobolev spaces, an interesting new class of nonlocal variational problems has emerged in the literature.  
These problems, which are the focus of this work, involve integral functionals that depend on Riesz fractional gradients instead of ordinary gradients and are considered subject to a complementary-value condition.
 
With the goal of establishing a comprehensive existence theory, we provide a full characterization for the weak lower semicontinuity of these functionals under suitable growth assumptions on the integrands.
In doing so, we surprisingly identify quasiconvexity, which is  intrinsic to the standard vectorial calculus of variations, as the natural notion also in the fractional setting.  
In the absence of quasiconvexity,
we determine a representation formula for the corresponding relaxed functionals, obtained via partial quasiconvexification outside the region where complementary values are prescribed. Thus, in contrast to classical results,
 the relaxation process induces a structural change in the functional, turning the integrand from a homogeneous into an inhomogeneous one. 
Our proofs rely crucially on an inherent relation between classical and fractional gradients, which we extend to Sobolev spaces, enabling us to transition between the two settings.

\vspace{8pt}

\noindent\textsc{MSC (2020): 49J45, 35R11} %
\vspace{8pt}

\noindent\textsc{Keywords:}  nonlocal variational problems, Riesz fractional gradient, fractional Sobolev spaces, weak lower semicontinuity, quasiconvexity, relaxation
\vspace{8pt}

\noindent\textsc{Date:} \today.
\end{abstract}
%

\section{Introduction}
The modern methods in the calculus of variations revolve, generally speaking, around an existence theory for minimizers of functionals defined on function spaces. 
An essential task
when it comes to applying these methods is to identify criteria to characterize the lower semicontinuity of the functionals under consideration with respect to a suitable topology that guarantees coercivity.
In case these conditions fail, minimizers may not exist, and relaxation techniques are commonly
deployed in order to capture the (asymptotic) behavior of minimizing sequences.

The most prominent and widely studied class of variational problems during the last decades involve integral functionals with integrands depending on functions and their gradient fields, often minimized subject to fixed boundary values. That is, when formulated on Sobolev spaces, one considers functionals of the form
\begin{equation}
\F(v)=\int_{\Omega} f(x, v(x), \nabla v(x))\dx \qquad \text{for $v\in g+ W_0^{1,p}(\Omega;\R^m)$,}
\label{eq:functional_gradient}
\end{equation} 
where $\Omega\subset\R^n$ is an open and bounded set, $p\in(0,1)$, $f:\Omega\times \R^m\times \R^{m\times n}\to \R$ a Carath\'eodory function with suitable growth 
assumptions, and $g\in W^{1,p}(\Omega;\R^m)$. 

It is well-known that quasiconvexity (\'a la Morrey~\cite{Mor}) is the generalized notion of convexity inherently associated with these (vectorial) variational problems; in the sense that $\Fcal$ is weak lower semicontinuous if and only if the integrand $f$ is 
quasiconvex in its third variable~\cite{Mor, Dacorogna, AcerbiFusco}, and that the relaxed version of $\Fcal$, i.e., its weak lower semicontinuous envelope, is obtained via quasiconvexification of $f$~\cite{Dacorognarelaxation, Dacorogna}.
 
The last years have seen increasing efforts to extend these results towards new types of variational functionals, and especially those with nonlocal features have attracted attention.  
On the one hand, this is motivated by applications,
where it is advantageous to take
global effects and long-range interactions into account, like for instance, in models of continuum mechanics, including peridynamics~\cite{Sil00, peridynamics2, MeD15} and new energetic approaches to hyperelasticity~\cite{Bellido}, or when using nonlocal regularizers
in imaging~\cite{OsherGilboa, AubertKornprobst} and machine learning applications~\cite{AntilKhatri, HollerKunish}.
On the other hand, from a theoretical perspective,  variational problems with nonlocalities bring about interesting mathematical challenges; since the standard methods, often based on localization arguments, are not readily applicable.
These difficulties are nicely illustrated for nonlocal double-integral functionals, which have been studied amongst others in~\cite{Pedregalperidynamics,Bellidoperidynamics,Kreisbeckintegral, MCT20}. 
For these functionals, separate convexity arises as the relevant notion for the characterization of lower semicontinuity, but 
it turns out that their relaxation is in general not structure preserving
and no universally applicable representation formula is currently known.
 
A different type of nonlocality involves the well-known concepts of fractional derivatives. 
While the fractional partial differential equations has been a very active field of research for decades, efforts towards developing a well-rounded existence theory for a fractional variational calculus
in multiple dimensions are more recent~\cite{Shieh1,Shieh2,Bellido}.
For an overview of variational problems involving one-dimensional fractional derivatives, we refer e.g.~to~\cite{Torres1}. \medskip

In this paper, we focus on a new class of variational problems, which can be viewed as the most natural fractional analogue of~\eqref{eq:functional_gradient}. It was first proposed in the scalar setting ($m=1$) by Shieh \& Spector in~\cite{Shieh1} and very recently formulated in a vectorial version by Bellido, Cueto \& Mora-Corral~\cite{Bellido}. 
The essential differences when compared to~\eqref{eq:functional_gradient} is that now the Riesz fractional gradient takes the role of the ordinary (weak) gradient. As recently shown by Silhavy in~\cite{Silhavy}, the former is (up to a constant) the unique fractional derivative operator to satisfy natural invariance and scaling properties, and can thus be considered the canonical choice of fractional derivative.

Let $\alpha\in (0,1)$. The Riesz fractional gradient is the singular-integral operator defined for $\varphi \in \Ccr$ by
\[
\na \phi(x):=\mu_{n,\alpha}\int_{\R^n}\frac{\phi(y)-\phi(x)}{\abs{y-x}^{n+\alpha}}\frac{y-x}{\abs{y-x}}\dy, \quad x\in \R^n,
\]
with a specific real constant $\mu_{n,\alpha}$.
Via a fractional integration by parts formula this definition can be extended in a distributional sense to functions $u\in L^p(\R^n)$ with $p\in [1, \infty]$, and componentwise to vector fields $u\in L^p(\R^n;\R^m)$. More details can be found in~Section~\ref{sec:rieszpotential}, besides, for broader context, we refer to~\cite{Obstacle,Mengesha,ShiehRegularity,Schikorra,Spector,Unified} for selected examples of recent research on fractional partial differential equations involving Riesz fractional gradients. 

Replacing the common weak gradient with the Riesz fractional gradient in~\eqref{eq:functional_gradient} necessitates two further changes in the new problem set-up, namely, the proper choice of function spaces, here,
\begin{align}\label{Salphap}
\Sp(\R^n;\R^m)=\{u \in L^p(\R^n;\R^m) \,: \, \na u \in L^p(\R^n;\R^{m\times n})\},
\end{align}
as well as, a substitute for the prescribed boundary values, which are local objects by nature, in terms of complementary values, see~\eqref{Sgalphap}.

Hence, we investigate here functionals of the form
\begin{equation}
\F_\alpha(u) =\int_{\R^n} f(x, u(x), \na u(x))\dx \qquad\text{for} \ u \in \Spg(\Omega;\R^m),
\label{eq:functional1}
\end{equation}
where $\Omega\subset\R^n$ is an open and bounded set, $p\in (1,\infty)$, $f:\R^n \times \R^m\times\Rmn \to \R$ a Carath\'eodory function with suitable growth assumptions, and $g\in \Sp(\R^n;\R^m)$; the complementary-value space $\Spg(\Omega;\R^m)$ is defined as
\begin{align}\label{Sgalphap}
\Spg(\Omega;\R^m)=\{u \in \Sp(\R^n;\R^m) \, : \, u=g \ \text{a.e. in} \ \Omega^c\}.
\end{align}

In light of the new class of variational problems with the functionals from~\eqref{eq:functional1}, it is an important task to develop a comprehensive existence theory for solutions thereof; noticing also, that minimizers of \eqref{eq:functional1} satisfy a (weak) Euler-Lagrange 
system of fractional partial differential equations \cite{Shieh2,Bellido}.
Our paper contributes two central aspects to this program, a full characterization for the weak lower semicontinuity and a relaxation result, which we 
state in Theorem~\ref{th:characterization} and Theorem~\ref{theo:relaxation} below.

Despite the apparent structural resemblance between $\Fcal_\alpha$ in~\eqref{eq:functional1} and $\Fcal$ in~\eqref{eq:functional_gradient},  a major difference for the technical treatment is rooted in the intrinsic nonlocal structure of the functionals, which enters through the appearance of the fractional derivatives.  Also, the fact that the functions in $\Spg(\Omega;\R^m)$ are defined on an unbounded domain, namely on all of $\R^n$, imposes technical subtleties beyond the standard theory for problems as in~\eqref{eq:functional_gradient}.

Our approach builds on the realization that there is a way to translate between the standard and the fractional problem. To do so, we 
exploit the connection between the gradients of classical and fractional Sobolev functions, using the Riesz potential $I_{\alpha}$ and fractional Laplacian $\da{\alpha/2}$; more explicitly, we prove a suitable extension of the identities for smooth test functions $\phi \in \Ccr$,
\[
\na \varphi=\nabla I_{1-\alpha} \varphi\qquad \text{and}\qquad \nabla \varphi= \na \dac \phi,
\]
to the classical and fractional Sobolev spaces. This allows switching between the two settings and thus, creates a powerful tool that enables us to recourse to the rich pool of results from the by now well-established theory on classical variational problems.
In a  similar spirit, other problems in the theory of partial differential equations have been approached in this way, for example in ~\cite{ShiehRegularity}. 

It becomes evident that our approach, and more generally, an  analysis of the fractional integral functionals in~\eqref{eq:functional1}, would not be possible without the substantial insights into the properties of spaces in~\eqref{Salphap}. In this sense, the works of \cite{Shieh1, Shieh2}, as well as the series of papers \cite{Comi1,Comi2,Comi3}, where Comi \& Stefani and coauthors (besides their primary focus on fractional spaces of bounded variations) introduce and study the fractional Sobolev spaces $\Sp(\R^n)$ from a distributional viewpoint, lay important groundwork. Among the many desirable properties that the new fractional Sobolev spaces share 
with the classical Sobolev spaces we mention for instance, fractional Poincar{\'e}-type, Sobolev and Morrey inequalities,  compact embeddings, and density of smooth functions with compact support; for more context, we refer to Section~\ref{sec:preliminaries}.
\medskip

As announced already, our first main result is a full characterization for the weak lower semicontinuity of $\F_\alpha$, which is given in terms of the quasiconvexity (in the third variable) of the integrand $f$ restricted to $\Omega$.  
As such, it generalizes the work in~\cite{Shieh1, Bellido}, where the convexity (in the scalar case) and polyconvexity  (in the vectorial case) of $f$ are identified as the sufficient conditions, respectively.
\begin{theorem}[Characterization of weak lower semicontinuity]
\label{th:characterization} 
Let $\alpha \in (0,1)$, $p \in (1,\infty)$, $\Omega \subset \R^n$ open and bounded, and $g \in \Sp(\R^n;\R^m)$. Suppose that $f:\R^n\times \R^m\times\Rmn \to \R$ is a Carath\'{e}odory function that satisfies
\[
0 \leq f(x,z,A) \leq a(x)+C(\abs{z}^p+\abs{A}^p)\quad \text{for a.e.~$x\in \R^n$ and for all} \ (z, A)\in \R^m\times \Rmn
\]
with $a \in L^1(\R^n)$ and $C>0$. Then the functional
\[
\F_{\alpha}(u)=\int_{\R^n}f(x,u,\na u)\dx, \quad u\in \Spgm,
\]
is (sequentially) weakly lower semicontinuous on $\Spgm$ if and only if $A \mapsto f(x,z,A)$ is quasiconvex for a.e. $x \in \Omega$ and all $z \in \R^m$.
\end{theorem}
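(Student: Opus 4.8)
The plan is to transfer the problem to the classical setting by means of the operator identities $\na u = \nabla I_{1-\alpha}u$ and $\nabla v = \na(-\Delta)^{(1-\alpha)/2}v$, suitably extended from $\Ccr$ to the Sobolev spaces involved, and then invoke the known characterization of weak lower semicontinuity for functionals of the type \eqref{eq:functional_gradient} (Morrey's theorem, in the form of Acerbi–Fusco). Concretely, given $u \in \Spgm$, set $v := I_{1-\alpha}u$, so that $\nabla v = \na u$, and observe that the map $u \mapsto v$ identifies $\Spgm$ with an affine subspace of $W^{1,p}_{\loc}$, under which weak convergence in $\Spgm$ corresponds to weak convergence of the gradients $\nabla v_k \weakto \nabla v$ in $\Lpmn$ together with $v_k \weakto v$ in a suitable sense (here I would use the compact fractional embeddings recorded in Section~\ref{sec:preliminaries} to upgrade weak convergence of $u_k$ in $\Sp$ to strong $L^p$-convergence on compact sets, hence pointwise a.e.\ convergence along a subsequence, which is what is needed to pass to the limit in the lower-order variable $z$). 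The functional then reads $\F_\alpha(u) = \int_{\R^n} f(x, u(x), \nabla v(x))\dx$, which is almost of the classical form except that $u$ and $v$ are genuinely different functions linked by a nonlocal operator.

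\textbf{Sufficiency.} Assume $A \mapsto f(x,z,A)$ is quasiconvex for a.e.\ $x \in \Omega$ and all $z$. Take $u_k \weakto u$ in $\Spgm$. By the complementary-value constraint, $u_k = g = u$ a.e.\ in $\Omega^c$, so $\na u_k = \na u$ there and the integrand depends on $k$ only through the values on $\Omega$; outside $\Omega$ the contribution $\int_{\Omega^c} f(x,g,\na g)\dx$ is a fixed finite constant by the growth bound. On $\Omega$, after extracting a subsequence realizing the liminf, I would use the compact embedding to get $u_k \to u$ strongly in $L^p(\Omega;\R^m)$ and a.e., while $\na u_k = \nabla v_k \weakto \nabla v = \na u$ in $\Lpmn$. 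Then standard lower semicontinuity for quasiconvex integrands with $p$-growth (applied with the frozen-and-then-varying $z$ argument, e.g.\ via the Acerbi–Fusco theorem, noting $v_k \weakto v$ in $W^{1,p}(\Omega';\R^m)$ for any bounded $\Omega' \supset \overline{\Omega}$) yields $\liminf_k \int_\Omega f(x,u_k,\na u_k)\dx \ge \int_\Omega f(x,u,\na u)\dx$. Adding the constant exterior term gives $\liminf_k \F_\alpha(u_k) \ge \F_\alpha(u)$.

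\textbf{Necessity.} Conversely, suppose $\F_\alpha$ is weakly lower semicontinuous; I must show $f(x,\cdot,\cdot)$ is quasiconvex in $A$ for a.e.\ $x \in \Omega$ and all $z$. Here I would argue by contradiction using localized test functions: fix a Lebesgue point $x_0 \in \Omega$ of the relevant integrands, a matrix $A_0$ and vector $z_0$, and a $Q$-periodic $\phi \in W^{1,\infty}_{\per}(Q;\R^m)$ with $\int_Q \nabla\phi = 0$; one builds oscillating perturbations of an affine-type function with gradient $A_0$ localized in a small cube around $x_0$, transported into $\Spgm$ via the operator $(-\Delta)^{(1-\alpha)/2}$ (so that the fractional gradient of the constructed sequence equals the classical gradient of the oscillating classical profile, up to errors that vanish because the fractional and classical gradients agree on $\Ccr$ and the correction operators are continuous on the appropriate scales). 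Weak lower semicontinuity applied to this sequence forces the classical Jensen-type inequality $f(x_0,z_0,A_0) \le \int_Q f(x_0,z_0,A_0+\nabla\phi(y))\dy$, i.e.\ quasiconvexity at $(x_0,z_0,A_0)$; ranging over a countable dense set of data and using continuity of $f$ in $(z,A)$ gives the claim a.e.\ on $\Omega$.

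\textbf{Main obstacle.} The technical heart of the argument — and the step I expect to be hardest — is making the change of variables $u \leftrightarrow v$ rigorous as a correspondence between the \emph{constrained} space $\Spgm$ and a classical Sobolev setting, while controlling the nonlocal tails: the function $v = I_{1-\alpha}u$ need not lie in $W^{1,p}(\R^n;\R^m)$ globally (only locally), and the exterior datum $g$, though fixed, lives on all of $\R^n$, so one must check that the exterior contribution is finite and genuinely independent of $k$, and that the recovery/oscillation sequences used for necessity actually satisfy the complementary-value condition exactly. Handling these tail and constraint-compatibility issues, via the fractional Poincaré and Sobolev inequalities and the density of $\Ccr$ in $\Sp$, is where the bulk of the work lies; once the dictionary between the two settings is in place, both implications reduce to their classical counterparts.
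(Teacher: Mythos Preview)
Your proposal contains a genuine gap in the sufficiency direction. You write that since $u_k = g = u$ a.e.\ in $\Omega^c$, it follows that $\na u_k = \na u$ on $\Omega^c$, so that the exterior contribution is a fixed constant independent of $k$. This is false: the Riesz fractional gradient is a \emph{nonlocal} operator, so $\na u_k(x)$ for $x \in \Omega^c$ depends on the values of $u_k$ on all of $\R^n$, in particular on $\Omega$, where $u_k$ varies with $k$. Thus $\na u_k \neq \na g$ on $\Omega^c$ in general, and the exterior integral $\int_{\Omega^c} f(x,g,\na u_k)\dx$ genuinely depends on $k$. The paper handles this via the key Lemma~\ref{le:strongoutside}, which shows that weak convergence $u_k \weakto u$ in $\Spgm$ nevertheless forces \emph{strong} convergence $\na u_k \to \na u$ in $L^p((\Omega')^c;\Rmn)$ for every $\Omega' \Supset \Omega$; this is proved using the fractional Leibniz rule and a cut-off argument, and is what allows the exterior term to be controlled (by strong lower semicontinuity plus monotone convergence as $\Omega' \downarrow \Omega$). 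You flag the exterior contribution as a concern in your ``Main obstacle'' paragraph, but your sufficiency argument treats it as trivial, which it is not.

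For the necessity direction your outline is closer in spirit to the paper's, but there is still a missing idea. You propose to perturb ``an affine-type function with gradient $A_0$'', but affine functions are not in $\Spom$ (they neither decay nor satisfy the complementary-value condition), and there is no obvious fractional analogue. The paper replaces this step by an explicit construction (Lemma~\ref{le:construction}): for any $x_0 \in \Omega$, $z_0 \in \R^m$, $A_0 \in \Rmn$ one builds $\phi \in C_c^\infty(\Omega;\R^m)$ with $\phi(x_0)=z_0$ and $\na\phi(x_0)=A_0$, using symmetry properties of the singular kernel. The oscillating test sequence is then $u + \chi\,(-\Delta)^{(1-\alpha)/2}\phi_j^\rho$, where $\chi$ is a cut-off ensuring membership in $\Spom$ and the error terms coming from the Leibniz rule (Lemma~\ref{le:leibnizp}) are shown to vanish in $L^p$. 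Without a substitute for affine functions and without controlling the cut-off errors, the localization step does not go through.
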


There are two notable aspects about this result we wish to elaborate on. 
First,
the statement of Theorem~\ref{th:characterization} reveals that the necessary and sufficient condition on $f$ is in fact independent of the fractional parameter $\alpha$, and even more, identical (inside of $\Omega$) with the condition that characterizes the weak lower semicontinuity in the standard gradient setting. 
Indeed, this is a consequence of the parallels between the two settings that allow us to utilize the classical weak lower semicontinuity result. 

If asked a priori to make a guess about the correct characterizing condition in Theorem~\ref{th:characterization},   expecting a truly fractional notion does not seem unnatural, especially with an eye to the formal analogy of the problem with~\eqref{eq:functional_gradient}. To reconcile this intuition with our findings of Theorem~\ref{th:characterization}, we introduce the notion of $\alpha$-quasiconvexity as follows: A  function $h: \Rmn \to \R$ is called $\alpha$-quasiconvex if for any $A\in \Rmn$,
\[
h(A)\leq \int_{(0,1)^n}h(A+\na \phi)\dx \qquad\text{for all } \phi \in S^{\alpha, \infty}_{per}((0,1)^n;\R^m),
\]
where $S^{\alpha,\infty}_{per}((0,1)^n;\R^m)$ contains all $(0,1)^n$-periodic functions in $S^{\alpha, \infty}(\R^n;\R^m)$. 
In fact, $\alpha$-quasi\-convexity of $h$ is independent of $\alpha \in (0,1)$, and equivalent with quasiconvexity, as we conclude in~Corollary~\ref{cor:equivalence}.

Second, we point out that Theorem~\ref{th:characterization} requires quasiconvexity only inside of $\Omega$. 
The reason for that is the following, somewhat surprising, property of complementary-value spaces $\Spgm$. In fact, for weakly converging sequences $u_j \weakto u$ in $\Spgm$, we show that the fractional gradients converge strongly in the complement of $\Omega$, see Lemma~\ref{le:strongoutside} below.

The intrinsic difficulty in the proof of Theorem~\ref{th:characterization} is that the link between the classical and fractional gradient goes through a nonlocal operation, which does not preserve complementary values. This necessitates cut-off arguments with careful error estimates for the fractional gradients, see Lemma~\ref{le:leibnizp}. Furthermore, as a replacement for the use of affine functions in the classical setting, we construct in~Lemma~\ref{le:construction} smooth functions with compact support that have a specific fractional gradient at a given point. \medskip

Our second main theorem is, to the best of our knowledge, the first relaxation result for a class of (vectorial) fractional integrals. 
It is stated for functionals $\Fcal_\alpha$ with a homogeneous integrand, that is, no direct dependence on $x$ and $u$, but only on the fractional gradient $\na u$. 
As one would expect in view of the characterization result of Theorem~\ref{th:characterization} the relaxation of $\Fcal$, meaning, its weakly lower semicontinuous envelope, can be obtained by taking the quasiconvexification $f^{\rm qc}$ of the integrand $f$ inside $\Omega$, while $f$ is kept unchanged in the region where complementary values are prescribed. We observe 
the remarkable effect that an integral functional with a homogeneous integrand is turned, through the relaxation process, into one with an inhomogeneous integrand.

\begin{theorem}[Relaxation formula]\label{theo:relaxation}
Let $\alpha \in (0,1)$, $p \in (1,\infty)$, $\Omega \subset \R^n$ open and bounded, and $g \in \Sp(\R^n;\R^m)$. Consider the functional 
\begin{equation}\label{eq:integralfunctional2}
\F_\alpha(u)=\int_{\R^n}f(\na u)\dx, \quad  \ u \in \Spgm,
\end{equation} 
where $f:\R^{m\times n}\to \R$ is continuous and satisfies 
\[
c\abs{A}^p\leq f(A) \leq C\abs{A}^p \quad \text{for all $A\in \R^{m\times n}$}
\]
with constants $C\geq c>0$. 
Then the relaxation of $\F_{\alpha}$ with respect to the weak convergence in $\Spgm$
is given  by
\begin{align*}
\F_\alpha^{\rm rel}(u)&=\inf\{\liminf_{j\to \infty} \F_{\alpha}(u_j): u_j\weakly u \text{ in $\Spgm$} \}\\ 
& = \int_{\Omega} f^{\rm qc}(\na u)\dx+\int_{\Omega^c}f(\na u)\dx, \qquad u \in \Spgm,
\end{align*}
where $f^{\rm qc}:\Rmn \to \R$ denotes the quasiconvex envelope of $f$.
\end{theorem}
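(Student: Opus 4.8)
The plan is to establish the two inequalities $\F_\alpha^{\rm rel}\ge G$ and $\F_\alpha^{\rm rel}\le G$, where $G(u):=\int_\Omega f^{\rm qc}(\na u)\dx+\int_{\Omega^c}f(\na u)\dx$. As a preliminary, note that $f^{\rm qc}$ is continuous, quasiconvex, and satisfies $c\abs{A}^p\le f^{\rm qc}(A)\le f(A)\le C\abs{A}^p$, the lower bound because $A\mapsto c\abs{A}^p$ is convex, hence a quasiconvex minorant of $f$. Thus $f^{\rm qc}$ meets the hypotheses of Theorem~\ref{th:characterization} (with $a\equiv0$ and no $z$-dependence), so $u\mapsto\int_{\R^n}f^{\rm qc}(\na u)\dx$ is weakly sequentially lower semicontinuous on $\Spgm$. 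Moreover, since $f$ and $f^{\rm qc}$ vanish at $0$ and have $p$-growth, the associated Nemytskii operators are continuous from $L^p$ into $L^1$ over any (possibly unbounded) measurable set; in particular $G$ is continuous for strong $S^{\alpha,p}$-convergence, and $\F_\alpha$ is coercive, $\F_\alpha(u)\ge c\norm{\na u}_{L^p}^p$, so sublevel sets are bounded in $\Spgm$ by the fractional Poincar\'e inequality.

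\emph{Lower bound.} Let $u_j\weakly u$ in $\Spgm$. By Lemma~\ref{le:strongoutside} the fractional gradients converge strongly outside $\Omega$, $\na u_j\to\na u$ in $L^p(\Omega^c;\Rmn)$, hence $\int_{\Omega^c}f(\na u_j)\dx\to\int_{\Omega^c}f(\na u)\dx$ and likewise for $f^{\rm qc}$. Using $f\ge f^{\rm qc}$ on $\Omega$, the identity $\int_\Omega f^{\rm qc}(\na u_j)\dx=\int_{\R^n}f^{\rm qc}(\na u_j)\dx-\int_{\Omega^c}f^{\rm qc}(\na u_j)\dx$, and weak lower semicontinuity of the full-space functional together with the convergence on $\Omega^c$, we find
\begin{align*}
\liminf_{j\to\infty}\F_\alpha(u_j)
&\ge \liminf_{j\to\infty}\int_\Omega f^{\rm qc}(\na u_j)\dx+\int_{\Omega^c}f(\na u)\dx\\
&\ge \int_{\R^n}f^{\rm qc}(\na u)\dx-\int_{\Omega^c}f^{\rm qc}(\na u)\dx+\int_{\Omega^c}f(\na u)\dx=G(u),
\end{align*}
and taking the infimum over admissible sequences yields $\F_\alpha^{\rm rel}\ge G$.

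\emph{Upper bound.} By the lower bound it suffices to construct, for each $u\in\Spgm$, a sequence $u_j\weakly u$ in $\Spgm$ with $\F_\alpha(u_j)\to G(u)$. Since $\na u=\nabla I_{1-\alpha}u$ (Section~\ref{sec:preliminaries}), the field $\na u$ is a gradient, so on a large ball $B\supset\overline\Omega$ we may write $\na u=\nabla v$ with $v\in W^{1,p}(B;\R^m)$. Fix $\delta>0$, set $\Omega_\delta:=\{x\in\Omega:\dist(x,\partial\Omega)>\delta\}\subset\subset\Omega$, and apply the classical relaxation theorem \cite{Dacorogna} for the homogeneous integrand $f$ on $\Omega_\delta$ with Dirichlet datum $v$: this yields $\phi_j^\delta\in W^{1,p}_0(\Omega_\delta;\R^m)$, extended by zero to $\R^n$, with $\phi_j^\delta\weakly0$ in $W^{1,p}$, with $\{\abs{\nabla\phi_j^\delta}^p\}_j$ equi-integrable, and with $\int_{\Omega_\delta}f(\na u+\nabla\phi_j^\delta)\dx\to\int_{\Omega_\delta}f^{\rm qc}(\na u)\dx$. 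We transfer this correction back: let $w_j^\delta:=\dac\phi_j^\delta$, which belongs to $S^{\alpha,p}(\R^n;\R^m)$ and satisfies $\na w_j^\delta=\nabla\phi_j^\delta$ on $\R^n$ by the extended identities of Section~\ref{sec:preliminaries}; pick $\eta_\delta\in C_c^\infty(\Omega;[0,1])$ with $\eta_\delta\equiv1$ on $\overline{\Omega_\delta}$ and set $u_j^\delta:=u+\eta_\delta w_j^\delta$. By Lemma~\ref{le:leibnizp}, $\eta_\delta w_j^\delta\in S^{\alpha,p}(\R^n;\R^m)$; it is supported in $\Omega$, so $u_j^\delta\in\Spgm$, and the Leibniz-type formula of Lemma~\ref{le:leibnizp} gives $\na u_j^\delta=\na u+\eta_\delta\nabla\phi_j^\delta+E_j^\delta$, where $E_j^\delta$ is the commutator of the fractional gradient with $\eta_\delta$ applied to $w_j^\delta$. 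This commutator is a smoothing, hence locally compact, operator whose kernel decays like $\abs{x-y}^{-n-\alpha}$ at infinity under a uniform $L^1$-bound on $\eta_\delta w_j^\delta$; combined with $w_j^\delta\weakly0$ this yields $E_j^\delta\to0$ strongly in $L^p(\R^n;\Rmn)$ as $j\to\infty$. Consequently $u_j^\delta\weakly u$ in $\Spgm$, and since $\nabla\phi_j^\delta$ vanishes outside $\Omega_\delta$,
\[
\F_\alpha(u_j^\delta)=\int_{\Omega_\delta}f(\na u+\nabla\phi_j^\delta+E_j^\delta)\dx+\int_{\Omega\setminus\Omega_\delta}f(\na u+E_j^\delta)\dx+\int_{\Omega^c}f(\na u+E_j^\delta)\dx.
\]
Letting $j\to\infty$, the first integral tends to $\int_{\Omega_\delta}f^{\rm qc}(\na u)\dx$ (absorbing $E_j^\delta$ via equi-integrability, then invoking the classical relaxation), while the second and third tend to $\int_{\Omega\setminus\Omega_\delta}f(\na u)\dx$ and $\int_{\Omega^c}f(\na u)\dx$ by Nemytskii continuity. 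Sending $\delta\to0$, the resulting value converges to $G(u)$ because $\abs{\Omega\setminus\Omega_\delta}\to0$ and $f(\na u),f^{\rm qc}(\na u)\in L^1$; a diagonalization in $(\delta,j)$ — legitimate because the weak topology is metrizable on the bounded sublevel sets involved — produces a single recovery sequence, so $\F_\alpha^{\rm rel}\le G$.

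\emph{Main obstacle.} The decisive and most delicate point is the cut-off in the upper bound: transferring a classical correction back to the fractional setting inevitably destroys the complementary values, and restoring them via $\eta_\delta$ generates the error $E_j^\delta$. Proving that $E_j^\delta\to0$ strongly in $L^p(\R^n;\Rmn)$ — simultaneously handling the compactness on bounded regions and the slowly decaying tail at infinity — is exactly where Lemma~\ref{le:leibnizp} carries the argument; once this is in hand, the interplay of the limits $j\to\infty$ and $\delta\to0$ and the appeal to an equi-integrable version of the classical relaxation theorem are routine. (That the characterizing notion is genuinely the \emph{classical} quasiconvexification rather than an $\alpha$-dependent one is consistent with the equivalence of $\alpha$-quasiconvexity and quasiconvexity in Corollary~\ref{cor:equivalence}.)
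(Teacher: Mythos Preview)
Your argument follows the same strategy as the paper's --- lower bound via weak lower semicontinuity of the candidate $G$, upper bound by pulling back a classical recovery sequence through $(-\Delta)^{(1-\alpha)/2}$ and a cut-off --- and is essentially correct. Two technical slips are worth flagging. First, Lemma~\ref{le:strongoutside} only yields $\na u_j\to\na u$ in $L^p((\Omega')^c;\Rmn)$ for $\Omega'\supset\!\supset\Omega$, not on $\Omega^c$ itself, so your splitting in the lower bound is not quite justified as written; the paper sidesteps this by observing directly that $G$ is weakly lower semicontinuous (Theorem~\ref{th:characterization} applied to $\tilde f(x,A)=\mathbbm{1}_\Omega(x)f^{\rm qc}(A)+\mathbbm{1}_{\Omega^c}(x)f(A)$) and using $G\le\F_\alpha$. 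Second, the identity $\na u=\nabla I_{1-\alpha}u$ holds only for $\phi\in C_c^\infty(\R^n)$ and can fail for $u\in S^{\alpha,p}$ when $p\ge n/(1-\alpha)$ (see Remark~\ref{rem3}); the correct tool is Proposition~\ref{prop:connectionriesz}\,$(i)$. Finally, the vanishing $E_j^\delta\to0$ in $L^p$ requires \emph{strong} convergence $w_j^\delta\to0$ in $L^p$, not merely weak --- this does hold, via the interpolation estimate~\eqref{bound} and $\phi_j^\delta\to0$ in $L^p$ --- and then Lemma~\ref{le:leibnizp} gives the error control exactly as in the paper.
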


The rest of the paper is organized as follows. In Section~\ref{sec:preliminaries}, we begin by fixing notations and collect the necessary technical tools and auxiliary results from the theory of fractional Sobolev spaces, with a special view to the implications of a complementary-value condition, see especially~Lemma \ref{le:strongoutside}. 
Section~\ref{sec:connections} is then concerned with establishing the connections between the classical and fractional Sobolev spaces, and, in particular, between classical and fractional gradients (see Proposition~\ref{prop:connectionfraclaplacian}), which serve as the foundation for the proofs in the subsequent sections. The proof of the characterization result in Theorem \ref{th:characterization} is then presented in Section~\ref{sec:lsc}, along with a study of the new notion of $\alpha$-quasiconvexity. As a consequence, we also show the existence of minimizers to fractional variational problems on complementary-value spaces under suitable assumptions. Finally, we prove the relaxation result of Theorem \ref{theo:relaxation} in Section~\ref{sec:relaxation}.

\section{Preliminaries and technical tools}\label{sec:preliminaries}

We first introduce the notation and then give a selection of tools that will be used throughout the paper. 

\subsection{Notation} Unless mentioned otherwise, $\alpha\in (0,1)$ and $Q=(0,1)^n\subset \R^n$. We denote the Eulidean norm of a vector $x=(x_1, \ldots, x_n)\in \R^n$ by $\abs{x}=\left(\sum_{i=1}^n x_i^2\right)^{1/2}$ and similarly, the Frobenius norm of a matrix $A \in \Rmn$ by $\abs{A}$. The ball centered at $x \in \R^n$ and with radius $\rho>0$ is written as $B(x,\rho)=\{ y \in \R^n : \abs{x-y}<\rho\}$. 
For $E\subset \R^n$, we indicate its complement as $E^c:=\R^n \setminus E$ and its closure as $\overline{E}$. The notation $E \Subset F$ for sets $E, F\subset \R^n$ means that $E$ is compactly contained in $F$, i.e., $\overline{E} \subset F$ and $\overline{E}$ is compact. Let 
\[
\mathbbm{1}_E(x) = \begin{cases} 1 &\text{for} \ x \in E,\\
0 &\text{otherwise},
\end{cases}\qquad x\in \R^n,
\]
be the indicator function of a set $E \subset \R^n$.
Moreover, $\Gamma$ stands for Euler's gamma function.

Let $U\subset \R^n$ be an open set. 
The space $C_{c}^{\infty}(U)$ symbolizes the smooth functions $\varphi:U\to \R$ with compact support in $U \subset \R^n$.  Note that functions in $C_c^\infty(U)$ are identified with their trivial extension to $\R^n$ by zero without further mention. Further, let $C^\infty(\R^n)$ and $C_0(\R^n)$ be the spaces of smooth functions on $\R^n$ and continuous functions on $\R^n$ vanishing at infinity, respectively.

By $\Lipb(\R^n)$, we refer to all the functions $\psi:\R^n \to \R$ that are Lipschitz continuous and bounded on $\R^n$ and we write $\Lip(\psi)$ for the Lipschitz constant of $\psi$.
The space $C^{0,\beta}(\R^n)$ with $\beta\in (0,1]$ consists of all real-valued $\beta$-H\"{o}lder continuous functions defined on $\R^n$. 

The Lebesgue measure of $U \subset \R^n$ is denoted by $\abs{U}$, and for a 
 function $u:U\to \R$, let us introduce the notation
\[
\dashint_{U} u(x)\dx=\frac{1}{\abs{U}}\int_{U}u(x)\dx,
\]
provided the integral exists.
We use the standard notation for Lebesgue- and Sobolev-spaces, that is, $L^p(U)$ for $p\in [1, \infty]$ is the space of $p$-real-valued integrable functions on $U$ with the norm 
\[
\norm{u}_{L^p(U)}=\begin{cases}\displaystyle\left( \int_{U} \abs{u(x)}\dx\right)^{1/p} &\text{if} \ p \in [1,\infty),\\
\esssup_{x \in U} \abs{u(x)} &\text{if} \ p=\infty,
\end{cases}\qquad u\in L^p(U);
\]
for brevity, we write $\norm{u}_{L^{p}(\R^n)}=\norm{u}_p$ when $U=\R^n$.  
 Moreover, $W^{1,p}(U)$ for $p\in [1, \infty]$ is the space of $L^p$-functions on $U$ with $p$-integrable weak derivatives, endowed with the norm
\[
\norm{u}_{W^{1,p}(U)}=\norm{u}_{L^p(U)}+\norm{\nabla u}_{L^p(U)};
\]
here $\nabla u$ stands for the weak gradient of $u$. 

The spaces of functions that are locally in $L^p(\R^n)$ and $W^{1, p}(\R^n)$ are denoted by $L^p_{loc}(\R^n)$ and $W^{1,p}_{loc}(\R^n)$.
Besides, $W^{1,p}_{0}(U)$ stands for those functions in $W^{1,p}(U)$ with zero boundary value in the sense of the trace. By $C^{\infty}_{per}(Q)$ and $\Winfper$ we indicate the $Q$-periodic functions in $C^{\infty}(\R^n)$ and $W^{1,\infty}(\R^n)$, respectively.  Notice also that we make frequent use of the fact that $W^{1,\infty}(\R^n)=\Lipb(\R^n)$ as a consequence of Rademacher's theorem. Furthermore, $p'\in [1, \infty]$ stands for the dual exponent of $p$, i.e., $1/p+1/p'=1$, and we recall that a sequence $(v_j)_j\subset L^p(U)$ is called $p$-equi-integrable if $(|v_j|^p)_j$ is equi-integrable.  

In general, the definitions above can be extended componentwise to spaces of vector-valued functions. Our notation then explicitly mentions the target space, like, for example, $L^p(U;\R^m)$ consists of all functions $u:U\to \R^m$ whose individual components lie in $L^p(U)$.

Finally, we use $C$ to denote a generic constant, which may change from one estimate to the next without further mention. Whenever we wish to indicate the dependence of $C$ on certain quantities, we add them in brackets.

\subsection{Fractional Calculus}
\label{sec:rieszpotential}
We start with a few basic observations about a singular integral operator that plays a major role in the fractional calculus, namely the Riesz potential. 
\begin{definition}[Riesz potential]
For $u:\R^n \to \R^m$ measurable and $\alpha\in (0,n)$, the Riesz potential $I_{\alpha}u$ of $u$ of order $\alpha$ is defined as
\begin{align*}
I_{\alpha}u(x)=\frac{1}{\gamma_{n,\alpha}}\int_{\R^{n}} \frac{u(y)}{\abs{x-y}^{n-\alpha}}\dy, \qquad x\in \R^n,
\end{align*}
with $\gamma_{n,\alpha}=\pi^{n/2}2^{\alpha}\frac{\Gamma(\alpha/2)}{\Gamma((n-\alpha)/2)}$, provided the integrals exist a.e.~in $\R^n$. 
\end{definition}

Since the Riesz potential of a function $u$ corresponds to its convolution with the locally integrable $ x\mapsto \gamma_{n, \alpha}^{-1}\abs{x}^{\alpha-n}$, the function $u$ needs to decay to zero sufficiently fast in order for the Riesz potential of $u$ to be well-defined. 
 To be precise, $I_{\alpha}u$ is well-defined if and only if
\begin{equation}
\label{eq:rieszpotentialwelldefined}
\int_{\R^n}\frac{|u(y)|}{(1+\abs{y})^{n-\alpha}}\dy <\infty,
\end{equation}
see e.g.~\cite[Theorem 1.1, Chapter~2]{Mizuta}, and in this case, $I_{\alpha}u$ is locally integrable.
Note that \eqref{eq:rieszpotentialwelldefined} is satisfied for any $u \in L^p(\R^n)$ with $1\leq p<n/\alpha$ and  that, then $I_{\alpha}u \in L^p_{loc}(\R^n)$, see~\cite[Theorem 2.1, Chapter~4]{Mizuta}. 
 
Moreover, when $u \in L^{\infty}(\R^n)$ has compact support, then $I_{\alpha}u \in L^{\infty}(\R^n)$, and one obtains that $I_{\alpha}u\in L^{\infty}(\R^n)\cap C^\infty(\R^n)$ for any $\phi \in \Ccr$.
For more on the Riesz potential, there is a wide literature to refer to, such as~\cite{Mizuta, Stein}. 

The central objects of study in this manuscript involve Riesz fractional gradients, which can be defined  for bounded Lipschitz functions
as follows, cf.~\cite[Definition 2.1]{Silhavy},~\cite[Section~2.2]{Comi1}.
\begin{definition}[Riesz fractional gradient]
\label{def:rieszfractionalderivative}
Let $\alpha \in (0,1)$ and $\phi \in \Lipb(\R^n)$. Then the (vector-valued) Riesz fractional gradient of $\phi$ is defined as
\begin{align}\label{Rieszfracgradient}
\na \phi(x)=\mu_{n,\alpha}\int_{\R^n}\frac{\phi(y)-\phi(x)}{\abs{y-x}^{n+\alpha}}\frac{y-x}{\abs{y-x}}\dy,\qquad x\in \R^n,
\end{align}
with $\mu_{n,\alpha}=2^{\alpha}\pi^{-n/2}\frac{\Gamma((n+\alpha+1)/2)}{\Gamma((1-\alpha)/2)}$.
\end{definition}
The integral in~\eqref{Rieszfracgradient} with $\varphi\in \Lipb(\R^n)$ exists everywhere and $\na \phi \in L^{\infty}(\R^n;\R^n)$ with the bound
\begin{equation}\label{eq:interpolationbound}
\abs{\na \phi(x)} \leq |\mu_{n,\alpha}| \int_{\R^n} \frac{\abs{\phi(y)-\phi(x)}}{\abs{y-x}^{n+\alpha}}\dy \leq C(n,\alpha)\norm{\phi}_{\infty}^{1-\alpha}\Lip(\phi)^{\alpha} 
\end{equation} 
for all $x \in \R^n$, see~\cite[Lemma~2.3]{Comi2}. The previous definition extends naturally to $\phi \in \Lipb(\R^n;\R^m)$, with $\na \phi(x)$ elements of $\Rmn$, by taking the Riesz fractional gradient componentwise. 

We point out that, in contrast to the properties of classical gradients, considering $\phi \in \Ccr$ does not necessarily imply that also $\na \phi$ has compact support. In one dimension, for instance, a straightforward calculation shows that any non-negative $\phi \in C_c^{\infty}((0,1))$ satisfies $\na \phi(x) \not =0$ for $x\in (0,1)^c$. Still, if we follow \cite[Proposition 5.2]{Silhavy} in defining the class of functions
\[
\mathcal{T}(\R^n)=\{\psi \in C^{\infty}(\R^n)  :\partial^{a}\psi \in L^1(\R^n) \cap C_0(\R^n) \ \text{for all multi-indices} \ a \in (\N\cup\{0\})^n\}\subset \Lipb(\R^n),
\]
where $\partial^{a}\psi$ denotes the $a$th partial derivative of $\psi$,
then $\na \phi \in \mathcal{T}(\R^n;\R^n)$ for all $\phi \in \Ccr$. 
An alternative representation of the Riesz fractional gradient, with particular relevance for this paper, can be given in terms of a classical gradient and the Riesz potential, precisely,
\begin{equation}
\na \phi=\nabla I_{1-\alpha}\phi=I_{1-\alpha}\nabla \phi \qquad\text{for $\phi\in \Ccr$, }
\label{eq:rieszpotentialcharacterization}
\end{equation} 
see e.g.~\cite[Proposition 2.2]{Comi1}, \cite[Theorem~1.2]{Shieh1}; in Proposition~\ref{prop:connectionriesz}\,$(i)$ below  (cf.~also Remark~\ref{rem3}\,b)), we extend \eqref{eq:rieszpotentialcharacterization} to a more general class of functions.

As proven in~\cite[Theorem~2.2]{Silhavy}, the Riesz fractional gradient is (up to a constant) the only linear operator on $C_c^\infty(\R^n)$ that is rotationally and translationlly invariant, $\alpha$-homogeneous and satisfies weak requirement of continuity; here, the $\alpha$-homogeneity of $\na \varphi$ for $\varphi\in C_c^\infty(\R^n)$ means that for any $\lambda >0$,
\begin{equation*}\label{eq:homogeneity}
\na \phi_\lambda(x)=\lambda^{\alpha} \na \phi (\lambda x) \quad \text{for $x\in \R^n$ with $\varphi_\lambda = \varphi(\lambda\, \cdot\,)$}.
\end{equation*}
This characterization result identifies the Riesz fractional gradient in some sense as the canonical fractional derivative, although it has only received increased attention in recent times. A very common object in the literature, on the other hand, is the fractional Laplacian. Out of the number of equivalent definitions (cf.~\cite{Silhavy, Kwasnicki}), we choose to work here with the following. 
\begin{definition}[Fractional Laplacian]\label{def:fracLaplacian}
Let $\alpha \in (0,1)$ and $\phi \in \Lipb(\R^n)$. Then the (scalar-valued) fractional Laplacian of $\phi$ is defined as
\begin{align}\label{fracLaplacian}
\da{\alpha/2}\phi(x)= \nu_{n,\alpha}\int_{\Rn}\frac{\phi(x+h)-\phi(x)}{\abs{h}^{n+\alpha}}\, dh, \qquad x\in \R^n,
\end{align}
with $\nu_{n,\alpha}=2^{\alpha}\pi^{-n/2}\frac{\Gamma((n+\alpha)/2)}{\Gamma(-\alpha/2)}$.
\end{definition}
The fact that this object is well-defined and lies in $L^{\infty}(\R^n)$ can be seen similarly to \cite[Lemma 2.2]{Comi2}, 
and it holds that $\da{\alpha/2}\phi \in \mathcal{T}(\R^n)$ for $\phi \in \Ccr$ by \cite[Proposition 5.2]{Silhavy}.
For vector-valued functions, we let $(-\Delta)^{\alpha/2}$ act componentwise.

Both the Riesz fractional gradient as well as the fractional Laplacian satisfy a duality relation: for all $\phi \in \Ccr$ and $\psi \in \Lipb(\R^n)$, it holds that
\begin{equation}
\int_{\R^n} \na \phi\,\psi\dx=-\int_{\R^n}\phi\,\na \psi\dx
\label{eq:duality}
\end{equation}
and
\begin{equation}
\int_{\R^n} \da{\alpha/2} \phi\,\psi\dx=\int_{\R^n}\phi\,\da{\alpha/2} \psi\dx.
\label{eq:duality2}
\end{equation}
The identity~\eqref{eq:duality} follows from the equivalent formulation in \cite[Proposition 2.8]{Comi2}, which involves the fractional divergence, and~\eqref{eq:duality2} can be verified analogously. 

There are many useful composition rules for the fractional operators $\na,\da{\alpha/2}$ and the fractional divergence considered in \cite[Theorem 5.3]{Silhavy} which mirror the well-known relations from the classical case. For our purposes, there is one we wish to highlight, that is,
\begin{equation}\label{eq:composition}
\dac \na \phi=\na \dac \phi=\nabla \phi \quad \text{for $\phi \in \Ccr$;}
\end{equation}
strictly speaking, only the second identity is mentioned in \cite[Theorem 5.3]{Silhavy}, but in view of~ \eqref{eq:duality} and \eqref{eq:duality2}
we find that for any $\phi,\psi \in \Ccr$,
\[
\int_{\R^n} \dac \na \phi \,\psi \dx=\int_{\R^n} \na \phi \dac \psi \dx=-\int_{\R^n}\phi \na \dac \psi \dx=-\int_{\R^n}\phi \nabla \psi \dx,
\]
and thus, \eqref{eq:composition} by duality.

Moreover, there is a fractional analogue of the Leibniz rule available. As shown in \cite[Lemma 2.4]{Comi2}, it holds for $\phi \in \Ccr$ and $\psi \in \Lipb(\R^n)$ that
\begin{equation}\label{eq:leibniz}
\na (\psi \phi)=\psi\na\phi+\phi\na\psi+\nanl(\phi,\psi),
\end{equation}
with
\begin{align}\label{nablaNL}
\nanl(\phi,\psi)(x)=\mu_{n,\alpha}\int_{\R^n}\frac{(y-x)(\phi(y)-\phi(x))(\psi(y)-\psi(x))}{\abs{y-x}^{n+\alpha+1}}\dy, \quad x\in \R^n,
\end{align}
which is a pointwise well-defined function in $L^{\infty}(\R^n)$.
A similar result, stated in a different notation, can be found in \cite[Lemma 3.4]{Bellido}.

In preparation for an extension of the Leibniz rule to fractional Sobolev spaces in Section~\ref{sec:complementary}, we derive the following $L^p$-estimate with $p\in [1, \infty]$ for the expression in~\eqref{nablaNL}. 
Indeed, by Minkowski's integral inequality as in \cite[Section~A.1]{Stein} along with H\"{o}lder's inequality, and \eqref{eq:interpolationbound}, we obtain for $\phi \in \Ccr$ and $\psi \in \Lipb(\R^n)$ that
\begin{align}\label{eq:nanlbound}
\begin{split}
\norm{\nanl(\phi,\psi)}_p&\leq\abs{\mu_{n,\alpha}} \ \norm*{\int_{\R^n}\frac{\abs{\phi(\cdot+h)-\phi} \,\abs{\psi(\cdot+h)-\psi}}{\abs{h}^{n+\alpha}}\,dh}_p\\
&\leq \abs{\mu_{n,\alpha}}\int_{\R^n} \frac{\normb{\abs{\phi(\cdot+h)-\phi} \,\abs{\psi(\cdot+h)-\psi}}_p}{\abs{h}^{n+\alpha}}\,dh\\
&\leq 2\abs{\mu_{n,\alpha}}\norm{\phi}_p \int_{\R^n}\frac{\norm{\psi(\cdot+h)-\psi}_{\infty}}{\abs{h}^{n+\alpha}}\,dh\\
&\leq C(n,\alpha)\norm{\phi}_p\norm{\psi}_{\infty}^{1-\alpha}\Lip(\psi)^{\alpha}.
\end{split}
\end{align}

\subsection{Definition of fractional Sobolev spaces}
\label{sec:definition} 
Following the distributional approach in~\cite{Shieh1}, we introduce the fractional Sobolev spaces that are necessary for the definition and study of the fractional variational problems in this paper.   
For a more extensive treatment and further details, we refer to~\cite{Comi1, Comi2, Comi3, Shieh1, Shieh2} and the references therein.
Inspired by the fractional integration by parts formula (\ref{eq:duality}), we define a weak fractional gradient as below. In fact, this is equivalent to \cite[Definition 3.19]{Comi1}, which is formulated with the help of the fractional divergence.
\begin{definition}[Weak Riesz fractional gradient]\label{def:weakRiesz}
Let $\alpha \in (0,1)$ and $u\in L^1(\R^n) + L^\infty(\R^n)$. Then $v \in L^1_{loc}(\R^n;\R^n)$ is called the weak $\alpha$-fractional gradient of $u$ if
\begin{align}\label{wRieszgrad}
\int_{\R^n}v \phi\dx =-\int_{\R^n}u\na \phi \dx\qquad\text{for all} \  \phi \in \Ccr,
\end{align}
and we write $v=\na u$.
\end{definition}
We remark that the integral on the right-hand side of~\eqref{wRieszgrad} is well-defined, since $\na \phi \in \mathcal{T}(\R^n;\R^n)$, and that the weak fractional gradient is unique by the fundamental theorem in the calculus of variations. With this, one can now introduce the fractional Sobolev spaces.
\begin{definition}[Fractional Sobolev space]
Let $\alpha \in (0,1)$ and $p \in [1,\infty]$. The fractional Sobolev space $\Sp(\R^n)$ is the vector space of all functions in $L^p(\R^n)$ that have a weak $\alpha$-fractional gradient in $L^p(\R^n;\R^n)$, i.e.,
\[
\Sp(\R^n)=\{u \in L^p(\R^n)\,:\,\na u \in L^p(\R^n;\R^n)\}.
\]
This space is endowed with the norm
\[
\norm{u}_{\Sp(\R^n)}=\norm{u}_p+\norm{\na u}_p \qquad \text{for $u\in \Sp(\R^n)$.}
\]
By $\Sp(\R^n;\R^m)$, we denote the vector-valued analogue, obtained via a componentwise definition. 
\end{definition} 
 
The spaces $\Sp(\R^n)$ for $p \in (1,\infty)$ 
are in fact equivalent to the the fractional Sobolev spaces introduced earlier in \cite{Shieh1} as the closure of $C_c^{\infty}(\R^n)$ under the $\Sp(\R^n)$-norm. This follows directly from~\cite[Theorem A.1]{Comi3}, where it is proven that $\Ccr$ lies dense in $\Sp(\R^n)$, see also Theorem~\ref{th:density} below for an alternative proof.  Hence, as a consequence of the findings~in \cite{Shieh1}, the spaces $\Sp(\R^n)$ coincide with the well-known Bessel potential spaces, see e.g.~\cite{Adams}.

A comparison with the literature shows that the spaces denoted by $L^{s,p}(\R^n), X^{s,p}(\R^n)$ in \cite{Shieh1} and by $H^{s,p}(\R^n)$ in \cite{Shieh2,Bellido,Bellido2} all coincide with $\Sp(\R^n)$, if $s$ is replaced by $\alpha$. In particular, $\Sp(\R^n)$ inherits all the properties established in \cite{Shieh1,Shieh2}. One property we wish to mention here is the continuous embedding
\[
W^{1,p}(\R^n) \hookrightarrow \Sp(\R^n)
\]
for $\alpha \in (0,1)$ and $p\in (1, \infty)$.
Beyond that, we refer the interested reader to~\cite[Theorem 2.2~(g)]{Shieh1} and \cite[Proposition 3.24]{Comi1} for a discussion of how $\Sp(\R^n)$ relates to the more well-known fractional Sobolev spaces $W^{\alpha,p}(\R^n)$ involving the Gagliardo semi-norm; see \cite{Hitchhiker} for an elementary introduction to these spaces. 

The following fractional Poincar\'{e}-type inequality is proven~\cite[Theorem 3.3]{Shieh1}, see also~\cite{Shieh1} for a fractional Sobolev and Morrey inequality. 
\begin{theorem}
Let $\alpha \in (0,1)$, $p\in (1,\infty)$ and $\Omega \subset \R^n$ open and bounded. Then there exists a constant  $C=C(\Omega,n,p, \alpha)>0$ such that 
\[
\norm{u}_{L^p(\Omega)} \leq C\norm{\na u}_{p}
\]
for all $u \in \Sp(\R^n)$.
\label{th:poincare}
\end{theorem}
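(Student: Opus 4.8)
The plan is to reduce to smooth, compactly supported functions and then to exploit the representation of the Riesz fractional gradient through the Riesz potential. Since $\Ccr$ is dense in $\Sp(\R^n)$, it suffices to establish the inequality for $u\in\Ccr$: if $u_j\to u$ in $\Sp(\R^n)$ with $u_j\in\Ccr$, then $u_j\to u$ in $L^p(\R^n)$, hence in $L^p(\Omega)$, while $\na u_j\to\na u$ in $L^p(\R^n;\R^n)$, so the estimate carries over to the limit.

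For $u\in\Ccr$ I would first establish the fractional fundamental theorem of calculus. Combining $\na u=\nabla I_{1-\alpha}u$ from \eqref{eq:rieszpotentialcharacterization} with the inversion of $\nabla$ and $I_{1-\alpha}$ on the Fourier side (equivalently, applying $I_\alpha$ componentwise and then composing with the vector Riesz transform), one obtains a convolution operator with kernel $x\mapsto c_{n,\alpha}\,x\abs{x}^{-(n+1-\alpha)}$, $c_{n,\alpha}\neq 0$, so that
\[
u(x)=c_{n,\alpha}\int_{\R^n}\frac{(x-y)\cdot\na u(y)}{\abs{x-y}^{n+1-\alpha}}\dy,\qquad x\in\R^n,
\]
the integral converging absolutely because $\na u\in\mathcal{T}(\R^n;\R^n)$ decays like $\abs{y}^{-(n+\alpha)}$ at infinity. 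In particular this yields the pointwise bound $\abs{u(x)}\leq C\,I_\alpha(\abs{\na u})(x)$ for all $x\in\R^n$.

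It then remains to estimate $\norm{I_\alpha(\abs{\na u})}_{L^p(\Omega)}$, and this is where the boundedness of $\Omega$ enters. Choosing $\rho>2\diam\Omega$ and splitting the Riesz kernel as $\abs{x-y}^{-(n-\alpha)}=\abs{x-y}^{-(n-\alpha)}\mathbbm{1}_{\{\abs{x-y}<\rho\}}+\abs{x-y}^{-(n-\alpha)}\mathbbm{1}_{\{\abs{x-y}\geq\rho\}}$, the local part is the convolution of $\abs{\na u}$ with the kernel $\abs{\,\cdot\,}^{-(n-\alpha)}\mathbbm{1}_{B(0,\rho)}\in L^1(\R^n)$, so Young's inequality bounds its $L^p(\R^n)$-norm, hence its $L^p(\Omega)$-norm, by $C(\rho,n,\alpha)\norm{\na u}_p$ for every $p\in(1,\infty)$. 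For the nonlocal part, when $x\in\Omega$ the constraint $\abs{x-y}\geq\rho$ forces $\abs{x-y}$ to be comparable to $\abs{y}$, so this term is dominated by the $x$-independent quantity $C\int_{\{\abs{y}\geq\rho/2\}}\abs{y}^{-(n-\alpha)}\abs{\na u(y)}\dy$, which by Hölder's inequality is at most $C(\rho,n,p,\alpha)\norm{\na u}_p$ provided $\abs{\,\cdot\,}^{-(n-\alpha)}\mathbbm{1}_{\{\abs{\,\cdot\,}\geq\rho/2\}}\in L^{p'}(\R^n)$, i.e.\ $\alpha p<n$; multiplying by $\abs{\Omega}^{1/p}$ completes this case. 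I expect the main obstacle to be exactly this nonlocal tail: in contrast to the classical Poincar\'{e} inequality, $u$ is defined on all of $\R^n$ and need not be compactly supported, so the long-range part of $\na u$ must be controlled. The truncated kernel handles the local behaviour for all exponents; for $\alpha p<n$ the decay of the Riesz kernel tames the tail as above (alternatively one may bypass the kernel splitting altogether, invoking the fractional Sobolev inequality $\norm{u}_{L^{np/(n-\alpha p)}(\R^n)}\leq C\norm{\na u}_p$ and then Hölder's inequality on the bounded set $\Omega$), while for $\alpha p\geq n$ one instead relies on the fractional Sobolev and Morrey embeddings to absorb the tail.
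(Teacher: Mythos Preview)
The paper does not prove this result; it is quoted from \cite[Theorem~3.3]{Shieh1}. Your route via the fractional fundamental theorem of calculus, yielding the pointwise bound $|u|\le C\,I_\alpha(|\na u|)$, followed either by your near/far kernel split or, equivalently, by Hardy--Littlewood--Sobolev on $\R^n$ plus H\"older on the bounded set $\Omega$, is exactly the Shieh--Spector argument and is correct in the subcritical range $\alpha p<n$.

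The gap is the supercritical range. Your closing sentence ``for $\alpha p\ge n$ one instead relies on the fractional Sobolev and Morrey embeddings to absorb the tail'' is not a proof, and in fact no proof can exist: the inequality as stated---for \emph{arbitrary} $u\in S^{\alpha,p}(\R^n)$ with no support restriction---fails when $\alpha p>n$. Take any $\eta\in C_c^\infty(\R^n)$ with $\eta(0)=1$ and set $u_\epsilon=\eta(\epsilon\,\cdot)$; by the $\alpha$-homogeneity of $\na$ one has $\|\na u_\epsilon\|_p=\epsilon^{\alpha-n/p}\|\na\eta\|_p\to 0$ as $\epsilon\to 0$, while $\|u_\epsilon\|_{L^p(\Omega)}\to|\Omega|^{1/p}>0$. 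The Morrey embedding only yields $\|u\|_{L^\infty}\le C\|u\|_{S^{\alpha,p}}$ with the full inhomogeneous norm on the right, so it cannot salvage the tail estimate. In fact Shieh--Spector's Theorem~3.3 carries the hypothesis $p<n/\alpha$, which the paper's formulation omits; the inequality \emph{is} valid for all $\alpha\in(0,1)$ and $p\in(1,\infty)$ if one restricts to $u\in S^{\alpha,p}_0(\Omega)$, and that is the only form in which the paper actually uses it.
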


Next, we state the density result proven in \cite[Appendix A]{Comi3} and give an alternative and more elementary proof, cf.~also Remark~\ref{rem4}.

\begin{theorem}
Let $\alpha \in (0,1)$ and $p \in [1,\infty)$. Then $\Ccr$ is dense in $\Sp(\R^n)$.
\label{th:density}
\end{theorem}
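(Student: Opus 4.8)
The goal is to show $\Ccr$ is dense in $\Sp(\R^n)$ for $p\in[1,\infty)$. The plan is to proceed in the two standard steps of such density arguments: first truncation (cutting off in space to get functions with compact support), then mollification (smoothing). The central tool powering both steps will be the identity $\na u = I_{1-\alpha}\nabla u$ type relations and the Leibniz rule~\eqref{eq:leibniz}, together with the $L^p$-estimate~\eqref{eq:nanlbound} on the nonlocal commutator term $\nanl$, which is precisely why that estimate was derived in the preliminaries.

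First I would handle \emph{mollification}. Given $u\in\Sp(\R^n)$, let $\rho_\delta$ be a standard mollifier and set $u_\delta = u*\rho_\delta$. Since convolution commutes with the (translation-invariant) weak fractional gradient, $\na u_\delta = (\na u)*\rho_\delta$, so $u_\delta\to u$ and $\na u_\delta\to\na u$ in $L^p$; moreover $u_\delta\in C^\infty(\R^n)$. Thus it suffices to approximate functions $u\in\Sp(\R^n)\cap C^\infty(\R^n)$ by $\Ccr$ functions. Next comes \emph{truncation}. Fix a cutoff $\eta\in\Ccr$ with $\eta\equiv 1$ on $B(0,1)$, $0\le\eta\le 1$, and set $\eta_R(x)=\eta(x/R)$; then consider $u_R = \eta_R u\in\Ccr$ (here I may first need to intersect the two steps: mollify, then truncate, so that $\eta_R u$ is genuinely smooth with compact support). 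Clearly $u_R\to u$ in $L^p$ by dominated convergence. For the gradient, apply the Leibniz rule~\eqref{eq:leibniz}:
\[
\na(\eta_R u) = \eta_R\,\na u + u\,\na\eta_R + \nanl(u,\eta_R).
\]
The first term converges to $\na u$ in $L^p$ by dominated convergence. For the second term, $\na\eta_R$ is bounded via the scaling of the Riesz fractional gradient: $\na\eta_R(x) = R^{-\alpha}(\na\eta)(x/R)$, so $\norm{u\,\na\eta_R}_p \le R^{-\alpha}\norm{\na\eta}_\infty\norm{u}_p \to 0$. For the third term, apply~\eqref{eq:nanlbound} with $\phi = u$ (approximated by $\Ccr$) and $\psi=\eta_R$: one gets $\norm{\nanl(u,\eta_R)}_p \le C(n,\alpha)\norm{u}_p\norm{\eta_R}_\infty^{1-\alpha}\Lip(\eta_R)^\alpha \le C(n,\alpha)\norm{u}_p R^{-\alpha}\norm{\eta}_\infty^{1-\alpha}\Lip(\eta)^\alpha \to 0$.

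The main obstacle I anticipate is the interplay of the two approximation steps: estimate~\eqref{eq:nanlbound} and the Leibniz rule~\eqref{eq:leibniz} are only stated for $\phi\in\Ccr$, whereas I want to apply them with $\phi$ a general (smooth) $\Sp$ function, so some care is needed to either first reduce to the case $u\in\Ccr$ where nothing is to prove, or—more usefully—to establish~\eqref{eq:leibniz} and~\eqref{eq:nanlbound} for $u\in\Sp(\R^n)$ by a density/approximation argument in their own right, which risks circularity. The clean way around this is to note that $\nanl(\cdot,\eta_R)$ and the Leibniz identity extend to $u\in L^p(\R^n)$ by continuity in $u$ for \emph{fixed} $\eta_R\in\Ccr$: indeed~\eqref{eq:nanlbound} shows $u\mapsto\nanl(u,\eta_R)$ is bounded $L^p\to L^p$, and~\eqref{eq:leibniz} as an identity between $L^1_{loc}$-distributional objects passes to the limit. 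So the logical order is: (i) prove the bound $\norm{u\,\na\eta_R}_p + \norm{\nanl(u,\eta_R)}_p \le C R^{-\alpha}\norm{u}_p$ for all $u\in L^p$, with $C$ independent of $R$; (ii) verify $\na(\eta_R u)=\eta_R\na u + u\na\eta_R+\nanl(u,\eta_R)$ for $u\in\Sp(\R^n)$ by testing against $\varphi\in\Ccr$ and using~\eqref{eq:duality}; (iii) conclude $\eta_R u\to u$ in $\Sp(\R^n)$; (iv) mollify $\eta_R u$ (compactly supported, in $\Sp$) to land in $\Ccr$. A minor additional subtlety is checking that $\eta_R u\in\Sp(\R^n)$ in the first place so that step (ii) makes sense—but this follows from the very identity in (ii) once the right-hand side is shown to be in $L^p$, which is exactly what (i) provides.
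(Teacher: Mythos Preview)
Your proposal is correct and follows the same two-step strategy as the paper: reduce to $u\in\Sp(\R^n)\cap C^\infty(\R^n)$ by mollification (the paper cites \cite[Theorem~3.22]{Comi1} for this rather than redoing it), then truncate with cutoffs $\chi_j$ whose Lipschitz constant tends to zero, controlling the commutator via~\eqref{eq:interpolationbound} and~\eqref{eq:nanlbound}. The one place the paper is slightly slicker is precisely the circularity issue you flag: rather than first establishing the Leibniz identity $\na(\chi_j u)=\chi_j\na u + u\na\chi_j + \nanl(u,\chi_j)$ for $u\in\Sp$---which in your step~(ii) would require the symmetry $\int u\,\nanl(\varphi,\chi_j)\dx=\int\varphi\,\nanl(u,\chi_j)\dx$ (proved later in the paper as~\eqref{eq:cancelintergal})---the paper applies~\eqref{eq:leibniz} only on the \emph{test-function side}, to the pair $(\phi,\chi_j)\in\Ccr\times\Lipb(\R^n)$, inside the pairing $-\int(\chi_j u)\na\phi\dx$, and then bounds $\norm{\na u - \na(\chi_j u)}_p$ directly by duality without ever identifying $\na(\chi_j u)$ explicitly. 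This yields exactly your estimate $\norm{(1-\chi_j)\na u}_p + C\norm{u}_p\Lip(\chi_j)^\alpha\to 0$ with no extension of Leibniz needed.
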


\begin{proof}
Let $u \in \Sp(\R^n)$. In view of \cite[Theorem 3.22]{Comi1}, which states that $\Sp(\R^n)\cap C^{\infty}(\R^n)$ lies dense in $\Sp(\R^n)$, we may assume that $u \in \Sp(\R^n)\cap C^{\infty}(\R^n)$.
For each $j \in \N$, consider a cut-off function $\chi_j \in \Ccr$ such that 
\begin{align*}
\chi_j \equiv 1 \quad \text{ on $B(0,j)$ \quad $0 \leq \chi_j \leq 1$ \quad and \quad $\Lip(\chi_j) \leq 1/j$. }
\end{align*}

We will show that $\chi_j u \in \Ccr$ converges to $u$ in $\Sp(\R^n)$. It is immediate to see that $\chi_j u \to u$ in $L^p(\R^n)$. For the convergence of the fractional gradients, we note that $\chi_j u \in \Ccr\subset \Sp(\R^n)$ and calculate with the help of the Leibniz rule~\eqref{eq:leibniz} that 
\begin{align*}
\int_{\R^n} \na(\chi_j u)\phi \dx&=-\int_{\R^n}\chi_j u \na \phi\dx\\
&=-\int_{\R^n}u\na(\chi_j\phi)\dx+\int_{\R^n}u\phi \na \chi_j\dx+\int_{\R^n}u\nanl(\phi,\chi_j)\dx\\
&=\int_{\R^n} (\na u)\chi_j\phi\dx + \int_{\R^n}u\phi \na \chi_j\dx+\int_{\R^n}u\nanl(\phi,\chi_j)\dx
\end{align*} 
for any $\phi \in \Ccr$. Then, by H\"older's inequality, along with~\eqref{eq:interpolationbound} and \eqref{eq:nanlbound},
\begin{align*}
&\int_{\R^n} \abs{\na u-\na(\chi_ju)}\abs{\phi}\dx\\
&\qquad \qquad  \leq\int_{\R^n} \abs{(1-\chi_j)\na u}\abs{\phi}\dx+\int_{\R^n}\abs{u\na\chi_j}\abs{\phi}\dx+\int_{\R^n}\abs{u}\abs{\nanl(\phi,\chi_j)}\dx \\ 
&\qquad \qquad \leq  \norm{(1-\chi_j)\na u}_{p}\norm{\phi}_{p'} + C(n,\alpha)\norm{u}_{p}\norm{\chi_j}_{\infty}^{1-\alpha}\Lip(\chi_j)^{\alpha}\norm{\phi}_{p'},
\end{align*}
and since, in both terms, the factors in front of $\norm{\phi}_{p'}$ go to zero as $j \to \infty$, the convergence $\norm{\na u-\na(\chi_j u)}_p \to 0$ follows via duality.
\end{proof}
\begin{remark}\label{rem4}
The previous proof is similar to the arguments in~\cite[Theorem 3.23]{Comi1} for $p=1$, with the one difference  that the sequence of cut-off functions used there does not flatten gradually.
For the case $p \in (1,\infty)$ proven in~\cite[Theorem A.1]{Comi3}, the authors adopt a different approach, passing through the theory of Bessel potential spaces. 
\end{remark}
We use the following notion of weak convergence in $\Sp(\R^n)$, which corresponds to the abstract notion of weak convergence induced by the dual space.
\begin{definition}[Weak convergence]
Let $\alpha \in (0,1)$ and $p \in [1,\infty)$. We say that a sequence $(u_j)_j \subset \Sp(\R^n)$ converges weakly to $u$ in $\Sp(\R^n)$ if $u_j \weakto u$ in $L^p(\R^n)$ and $\na u_j \weakto \na u$ in $L^p(\R^n;\R^n)$, and we write $u_j \weakto u$ in $\Sp(\R^n)$.
\end{definition}

\subsection{Complementary-value fractional Sobolev spaces} \label{sec:complementary} The minimization problems we study here are formulated on fractional Sobolev spaces with fixed complementary values outside an open and bounded set;  precisely, for $\Omega \subset \R^n$ open and bounded, let 
\[
\Spo(\Omega)=\{ u \in \Sp(\R^n) \, :\,  u=0 \ \text{a.e. in} \ \Omega^c\},
\]
which is a closed subspace of $\Sp(\R^n)$, and define for given $g \in \Sp(\R^n)$ the affine subspace 
\begin{center}
$\Spg(\Omega)=g+\Spo(\Omega)$. 
\end{center}
When writing $u_j\weakly u$ in $S_g^{\alpha, p}(\Omega)$, we mean that $(u_j)_j \subset S_g^{\alpha, p}(\Omega)$ and $u\in S_g^{\alpha, p}(\Omega)$ such that $u_j\weakly u$ in $S^{\alpha, p}(\R^n)$.

We first recall the following compactness result on these spaces from \cite[Theorem 2.1]{Shieh2} and \cite[Theorem 2.3]{Bellido}, adjusted to our needs. It is crucial when applying the direct method in the calculus of variations to functionals of the form~\eqref{eq:functional1}.
\begin{theorem}
Let $\alpha \in (0,1)$, $p \in (1,\infty)$, $\Omega \subset \R^n$ open and bounded and $g \in \Sp(\R^n)$. Then, for any bounded sequence $(u_j)_j \subset \Spg(\Omega)$ there exists a subsequence (not relabeled) and a $u \in \Spg(\Omega)$ such that
\[
u_j \to u \ \text{in} \ L^{p}(\R^n) \qquad \text{and} \qquad \na u_j \weakto \na u \ \text{in} \ L^{p}(\R^n;\R^n). \
\]
\label{th:compactness}
\end{theorem}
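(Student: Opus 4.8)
The plan is to run the direct method: produce $u$ as a weak $L^{p}$-limit using reflexivity, identify its weak fractional gradient by testing against $\Ccr$, and then upgrade the weak $L^{p}$-convergence to strong convergence by means of a compact embedding that exploits the fact that, modulo the fixed datum $g$, the competitors are supported in the bounded set $\overline{\Omega}$. This is essentially the argument underlying \cite[Theorem~2.1]{Shieh2} and \cite[Theorem~2.3]{Bellido}.

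First, since $(u_j)_j$ is bounded in $\Sp(\R^n)$, the sequences $(u_j)_j$ in $L^{p}(\R^n)$ and $(\na u_j)_j$ in $L^{p}(\R^n;\R^n)$ are bounded; because $p\in(1,\infty)$ these spaces are reflexive, so after passing to a subsequence (not relabeled) I obtain $u_j\weakto u$ in $L^{p}(\R^n)$ and $\na u_j\weakto w$ in $L^{p}(\R^n;\R^n)$ for some $u$ and $w$. To see that $w=\na u$ in the sense of Definition~\ref{def:weakRiesz}, I fix $\phi\in\Ccr$ and pass to the limit in $\int_{\R^n}(\na u_j)\phi\dx=-\int_{\R^n}u_j\na\phi\dx$: this is legitimate since $\phi\in L^{p'}(\R^n)$ and $\na\phi\in\mathcal{T}(\R^n;\R^n)\subset L^{p'}(\R^n;\R^n)$, and it yields $\int_{\R^n}w\phi\dx=-\int_{\R^n}u\na\phi\dx$. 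Hence $u\in\Sp(\R^n)$ with $\na u=w$, so $u_j\weakto u$ in $\Sp(\R^n)$ and in particular $\na u_j\weakto\na u$ in $L^{p}(\R^n;\R^n)$, which is one of the two asserted convergences.

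The crux is to strengthen $u_j\weakto u$ to $u_j\to u$ in $L^{p}(\R^n)$, and this is where the complementary-value structure enters. I set $v_j:=u_j-g\in\Spo(\Omega)$, so that $(v_j)_j$ is bounded in $\Sp(\R^n)$ and every $v_j$ vanishes a.e.\ in $\Omega^{c}$, hence is supported in $\overline{\Omega}$. Using that $\Sp(\R^n)$ coincides with the Bessel potential space $H^{\alpha,p}(\R^n)$ and embeds continuously into $W^{\beta,p}(\R^n)$ for every $\beta\in(0,\alpha)$ (cf.\ \cite[Theorem~2.2]{Shieh1} and \cite{Adams}), I fix an open ball $B$ with $\overline{\Omega}\subset B$ and invoke the fractional Rellich--Kondrachov theorem, namely the compactness of the embedding $W^{\beta,p}(B)\embed L^{p}(B)$ (see e.g.\ \cite[Corollary~7.2]{Hitchhiker}). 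Along a further subsequence this gives $v_j\to\tilde v$ in $L^{p}(B)$; since the $v_j$ vanish outside $\overline{\Omega}\subset B$, this is the same as $v_j\to\tilde v$ in $L^{p}(\R^n)$. As $v_j\weakto u-g$ in $L^{p}(\R^n)$ by the previous step, uniqueness of weak limits forces $\tilde v=u-g$, and therefore $u_j=v_j+g\to u$ in $L^{p}(\R^n)$. (Alternatively, one could verify the Fr\'echet--Kolmogorov criterion for $(v_j)_j$ directly, the uniformly bounded supports being precisely the ingredient, beyond the $\Sp$-bound, that is unavailable on all of $\R^n$.)

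Finally, $u\in\Spg(\Omega)$: passing to a subsequence along which $u_j\to u$ pointwise a.e., the relations $u_j=g$ a.e.\ in $\Omega^{c}$ pass to the limit, or equivalently one uses that $\Spo(\Omega)$ is a closed, hence weakly closed, subspace of $\Sp(\R^n)$ and that $v_j\weakto u-g$ in $\Sp(\R^n)$. I expect the only real obstacle to be the strong-convergence step: one must genuinely bring the fractional differentiability of $\Sp(\R^n)$ into play, in the form of a compact embedding, and the reason such compactness is available here --- unlike on $\R^n$ --- is exactly that the complementary-value condition forces the competitors to live on the fixed bounded set $\overline{\Omega}$.
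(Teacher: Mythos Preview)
The paper does not supply its own proof of this statement; it simply cites \cite[Theorem~2.1]{Shieh2} and \cite[Theorem~2.3]{Bellido}. Your argument is correct and, as you yourself indicate, is essentially the standard one underlying those references: reflexivity gives a weak subsequential limit in $\Sp(\R^n)$, the complementary-value condition localizes the sequence $(u_j-g)_j$ to the bounded set $\overline{\Omega}$, and the embedding $\Sp(\R^n)\hookrightarrow W^{\beta,p}(\R^n)$ for $\beta<\alpha$ combined with the fractional Rellich--Kondrachov theorem upgrades weak to strong $L^p$-convergence.
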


A natural way of constructing functions in the complementary-value spaces involves cut-off arguments. To facilitate the latter, 
a generalization of the fractional Leibniz rule for functions in fractional Sobolev spaces $S^{\alpha, p}(\R^n)$ becomes necessary. The proof for $p \in [1,\infty)$ follows by density and can already be found in \cite[Lemma 3.4]{Bellido}. As the latter uses a different notation, we detail the argument below for the readers' convenience. The proof of the Leibniz rule for
$S^{\alpha, \infty}(\R^n)$ appears to be new in the literature.
\begin{lemma}
\label{le:leibnizp}
Let $\alpha \in (0,1)$, $p \in [1,\infty]$, $\psi \in \Lipb(\R^n)$ and $u \in \Sp(\R^n)$. Then $\psi u \in \Sp(\R^n)$ with
\begin{align}\label{chiu}
\na (\psi u)=\psi \na u+u \na \psi+\nanl(u,\psi),
\end{align}
and there is a constant $C=C(n,\alpha)>0$ such that
\begin{align}\label{est_cutoff}
\norm{\na (\psi u)-\psi \na u}_p
 \leq C\norm{\psi}_{\infty}^{1-\alpha}\Lip(\psi)^{\alpha}\norm{u}_p. 
\end{align}
\end{lemma}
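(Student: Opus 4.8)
The plan is to prove the identity \eqref{chiu} first for the case $p \in [1,\infty)$ by a density argument, and then treat $p = \infty$ separately via a duality/weak-* argument, using the uniform $L^\infty$-bounds already available. Throughout, the estimate \eqref{est_cutoff} will be an immediate corollary of \eqref{chiu} together with the bounds \eqref{eq:interpolationbound} (applied to $\na\psi$, which forces us to be a bit careful since $\psi$ is only Lipschitz, not smooth) and \eqref{eq:nanlbound} (which must likewise be extended from $\varphi \in \Ccr$, $\psi \in \Lipb$ to $u \in L^p$, $\psi \in \Lipb$). Note that \eqref{eq:interpolationbound} as stated already applies to any $\phi \in \Lipb(\R^n)$, so $\na\psi \in L^\infty$ with $\norm{\na\psi}_\infty \leq C(n,\alpha)\norm{\psi}_\infty^{1-\alpha}\Lip(\psi)^\alpha$ is legitimate, and the term $\nanl(u,\psi)$ satisfies, by literally the same Minkowski-plus-Hölder computation as in \eqref{eq:nanlbound} but now with $\phi$ replaced by a general $u \in L^p(\R^n)$, the bound $\norm{\nanl(u,\psi)}_p \leq C(n,\alpha)\norm{u}_p\norm{\psi}_\infty^{1-\alpha}\Lip(\psi)^\alpha$; this last computation only uses that $u \in L^p$, so it requires no smoothness of $u$.

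For $p \in [1,\infty)$: given $u \in \Sp(\R^n)$, use Theorem~\ref{th:density} to pick $\phi_j \in \Ccr$ with $\phi_j \to u$ in $\Sp(\R^n)$. For each $j$, the smooth Leibniz rule \eqref{eq:leibniz} gives $\na(\psi\phi_j) = \psi\na\phi_j + \phi_j\na\psi + \nanl(\phi_j,\psi)$ in $L^p$. Now pass to the limit in each term: $\psi\phi_j \to \psi u$ in $L^p$ and $\psi\na\phi_j \to \psi\na u$ in $L^p$ (since $\psi$ is bounded); $\phi_j\na\psi \to u\na\psi$ in $L^p$ (since $\na\psi \in L^\infty$); and $\nanl(\phi_j,\psi) \to \nanl(u,\psi)$ in $L^p$ by the extended bound above applied to $\phi_j - u$. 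Hence the right-hand side converges in $L^p$, so $\na(\psi\phi_j)$ converges in $L^p$; since $\psi\phi_j \to \psi u$ in $L^p$ and the weak fractional gradient is closed under $L^p$-convergence (testing against $\Ccr$ and using that $\na$ of a test function lies in $\mathcal{T}(\R^n;\R^n)$, which is in every $L^{p'}$), we conclude $\psi u \in \Sp(\R^n)$ with $\na(\psi u)$ equal to the claimed limit, which is \eqref{chiu}. The estimate \eqref{est_cutoff} then follows by subtracting $\psi\na u$ and applying the two bounds.

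For $p = \infty$: density of $\Ccr$ fails, so instead fix $u \in S^{\alpha,\infty}(\R^n)$ and test the desired identity against an arbitrary $\phi \in \Ccr$, i.e. show $\int \na(\psi u)\,\phi = \int (\psi\na u + u\na\psi + \nanl(u,\psi))\phi$, where the left side means $-\int (\psi u)\na\phi$. Using the smooth Leibniz rule in the form $\na(\psi\phi) = \psi\na\phi + \phi\na\psi + \nanl(\phi,\psi)$ for the test function $\phi$ (with the roles arranged so everything pairs against $u \in L^\infty$ against $L^1$-functions), rewrite $-\int u\,\psi\na\phi = -\int u\,\na(\psi\phi) + \int u\,\phi\na\psi + \int u\,\nanl(\phi,\psi)$; the first term equals $\int (\na u)\,\psi\phi$ by the definition of the weak fractional gradient of $u$ (valid since $u \in L^1 + L^\infty$ and $\psi\phi \in \Ccr$... but $\psi\phi$ is only Lipschitz with compact support, not smooth — so one must instead approximate $\psi$ by smooth Lipschitz functions with uniformly bounded Lipschitz constants, or invoke that the defining relation \eqref{wRieszgrad} extends to $\psi\phi$ by a further density step). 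The remaining two terms match after checking the symmetry $\int u\,\nanl(\phi,\psi) = \int \phi\,\nanl(u,\psi)$, which is a Fubini computation on the kernel in \eqref{nablaNL} (the kernel is symmetric under swapping the two scalar factors and the integration variable), justified by the finiteness of the relevant double integral via \eqref{eq:nanlbound}. Since $\phi \in \Ccr$ was arbitrary and $\psi\na u + u\na\psi + \nanl(u,\psi) \in L^\infty$, uniqueness of the weak fractional gradient gives \eqref{chiu}, hence $\psi u \in S^{\alpha,\infty}(\R^n)$, and \eqref{est_cutoff} follows as before.

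The main obstacle is the $p = \infty$ case: one cannot approximate $u$ strongly, so the argument must be entirely by duality, and the delicate point is justifying the manipulation $-\int u\,\na(\psi\phi) = \int (\na u)\psi\phi$ when $\psi\phi$ is Lipschitz with compact support but not smooth — this needs either an extension of the defining relation \eqref{wRieszgrad} to such test functions (by mollifying $\psi$ while controlling $\Lip(\psi_\varepsilon) \leq \Lip(\psi)$ and using \eqref{eq:interpolationbound}/\eqref{eq:nanlbound} to pass to the limit) or a direct Fubini argument on the defining singular integrals. The symmetry identity for $\nanl$ is the other computational point but is routine once integrability is in hand.
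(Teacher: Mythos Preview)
Your proposal is correct and follows essentially the same route as the paper: density plus the smooth Leibniz rule for $p<\infty$, and a duality argument combining the Leibniz rule for $\phi\in\Ccr,\ \psi\in\Lipb(\R^n)$ with the Fubini symmetry $\int u\,\nanl(\phi,\psi)=\int \phi\,\nanl(u,\psi)$ for $p=\infty$. In fact you are more explicit than the paper on the one subtle point: the paper passes from $\int_{\R^n} u\,\na(\psi\phi)\,dx$ to $-\int_{\R^n}(\na u)\,\psi\phi\,dx$ without comment, even though $\psi\phi$ is only compactly supported Lipschitz rather than in $\Ccr$; your proposed fix (mollify $\psi$, control everything via \eqref{eq:interpolationbound} and \eqref{eq:nanlbound}, pass to the limit) is exactly how one justifies this extension of \eqref{wRieszgrad}.
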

\begin{proof} As indicated above, we prove the statement separately for $p<\infty$ and $p=\infty$.\medskip

\textit{Case $p\in [1, \infty)$.} Take  a sequence $(u_j)_j \subset \Ccr$ with $u_j \to u$ in $\Sp(\R^n)$ according to Theorem \ref{th:density}.  Then, $\psi u_j \in \Lipb(\R^n)$ and
\[
\na (\psi u_j)=\psi \na u_j+u_j \na \psi+\nanl(u_j,\psi)
\]
for $j\in \N$, in view of~\eqref{eq:leibniz}. 
Since $\psi u_j \to \psi u$ in $L^p(\R^n)$ and by \eqref{eq:nanlbound}
\[
\na (\psi u_j) \to \psi \na u+u \na \psi+\nanl(u,\psi) \ \ \text{in} \ L^p(\R^n;\R^n),
\]
we infer via the definition of the weak fractional gradient that $\na (\psi u)=\psi \na u+u \na \psi+\nanl(u,\psi)$, and hence, $\psi u \in \Sp(\R^n)$. The estimate~\eqref{est_cutoff} follows from~\eqref{eq:interpolationbound} and~\eqref{eq:nanlbound}. \medskip

\textit{Case $p=\infty$.} Identically to \eqref{eq:nanlbound}, we can show that $\nanl(u, \psi)$ is pointwise a.e.~well-defined and lies in $L^{\infty}(\R^n)$.
As we will show below,
\begin{equation} 
\int_{\R^n}\phi \nanl(u,\psi)\dx= \int_{\R^n}u \nanl(\phi,\psi)\dx
\label{eq:cancelintergal}
\end{equation}
for all $\phi \in \Ccr$, 
and hence, along with the Leibniz rule for smooth functions from~\eqref{eq:leibniz},
\begin{align*}
\int_{\R^n}u\psi \na \phi \dx &= \int_{\R^n}u\psi \na \phi + u \nanl(\phi,\psi)-\phi \nanl(u,\psi)\dx \\ &=\int_{\R^n} u\na (\psi \phi)-u \phi \na \psi-\phi\nanl(u,\psi)\dx  \\ & =- \int_{\R^n} (\psi \na u+u\na \psi+\nanl(u,\psi))\phi\dx,
\end{align*}
which gives~\eqref{chiu}. As above, the estimate~\eqref{est_cutoff} follows again from~\eqref{eq:interpolationbound} and~\eqref{eq:nanlbound}.

Finally,~\eqref{eq:cancelintergal} follows via Fubini's theorem and a symmetry argument. To be precise, one observes that
\begin{align*}
\begin{split}
\int_{\R^n}-u \nanl(\phi,\psi)\dx & =\int_{\R^n}\int_{\R^n} -u(x)\frac{(y-x)(\phi(y)-\phi(x))(\psi(y)-\psi(x))}{\abs{y-x}^{n+\alpha+1}}\dy\dx \\ & =
\int_{\R^n}\int_{\R^n} u(y)\frac{(y-x)(\phi(y)-\phi(x))(\psi(y)-\psi(x))}{\abs{y-x}^{n+\alpha+1}} \dy\dx,
\end{split}
\end{align*}
where the order of integration has been interchanged and the integration variables $x$ and $y$ have been renamed.
Then,
\begin{align*}
&\int_{\R^n}- u \nanl(\phi,\psi)+\phi \nanl(u,\psi)\dx \\ 
&\qquad\qquad \qquad =\int_{\R^n}\int_{\R^n} \frac{(y-x)(\psi(y)-\psi(x))}{\abs{y-x}^{n+\alpha+1}}(u(y)\phi(y)-u(x)\phi(x))\dy\dx=0, 
\end{align*}
exploiting that the double-integrand above is an odd function of the variables $x$ and $y$. This finishes the proof. 
\end{proof}
Interestingly, the following lemma shows that outside the domain the weak convergence of the fractional gradients is actually strong convergence. This observation appears to be new and has quite far-reaching consequences for the remainder of the paper.
\begin{lemma}
Let $\alpha \in (0,1)$, $p \in [1,\infty)$, $\Omega \subset \R^n$ open and bounded, and $g \in \Sp(\R^n)$. If $u_j \weakto u$ in $\Spg(\Omega)$, then 
\begin{align}\label{strongoncomplement}
\na u_j \to \na u \ \ \text{in} \ L^p((\Omega')^c;\R^n) 
\end{align}
for every open set $\Omega'\subset \R^n$ with $\Omega\Subset \Omega'$.
\label{le:strongoutside}
\end{lemma}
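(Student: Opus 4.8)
The plan is to strip the weak fractional gradient of $w_j := u_j - u$ of its distributional nature on $(\Omega')^c$ by cutting off against a Lipschitz function equal to $1$ near infinity and vanishing on $\Omega$; on that region $\na w_j$ then becomes a genuinely non-singular integral applied to $w_j$, to which one can pass to the limit. To set up, note that $u_j,u\in\Spg(\Omega)$ forces $w_j\in\Spo(\Omega)$, i.e.\ $w_j=0$ a.e.\ in $\Omega^c$, and $w_j\weakto 0$ in $\Sp(\R^n)$, so that $(w_j)_j$ is bounded in $\Sp(\R^n)$ and, by restriction, $w_j\weakto 0$ in $L^p(\Omega)$. Since the weak fractional gradient is linear, $\na w_j=\na u_j-\na u$, and it suffices to show $\na w_j\to 0$ in $L^p((\Omega')^c;\R^n)$.

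Fix an open set $V$ with $\overline{\Omega}\subset V\Subset\Omega'$ and $\rho\in\Ccr$ with $\supp\rho\subset\Omega'$ and $\rho\equiv 1$ on $\overline{V}$, and put $\theta:=1-\rho\in\Lipb(\R^n)$, so that $\theta\equiv 0$ on $V\supseteq\Omega$ and $\theta\equiv 1$ on $(\Omega')^c$. As $\theta$ vanishes on $\Omega$ while $w_j$ vanishes a.e.\ outside $\Omega$, the product $\theta w_j$ is zero a.e.\ in $\R^n$, hence $\na(\theta w_j)=0$. Inserting this into the Leibniz rule~\eqref{chiu} of Lemma~\ref{le:leibnizp} applied to $\theta$ and $w_j$ gives $0=\theta\na w_j+w_j\na\theta+\nanl(w_j,\theta)$ a.e.\ in $\R^n$, and restricting to $(\Omega')^c$, where $\theta=1$ and $w_j=0$ a.e., yields $\na w_j=-\nanl(w_j,\theta)$ a.e.\ in $(\Omega')^c$. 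Now insert the explicit formula~\eqref{nablaNL} for $\nanl(w_j,\theta)$: for a.e.\ $x\in(\Omega')^c$ one has $w_j(x)=0$, $\theta(x)=1$, and the $y$-integrand is supported, up to a null set, in $\Omega$, where $\theta\equiv 0$; this collapses everything to the representation
\[
\na w_j(x)=\mu_{n,\alpha}\int_{\Omega}\frac{(y-x)\,w_j(y)}{\abs{y-x}^{n+\alpha+1}}\dy \qquad\text{for a.e.\ }x\in(\Omega')^c.
\]

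With this formula in hand, the conclusion follows by dominated convergence. For fixed $x\in(\Omega')^c$ the kernel $y\mapsto\mu_{n,\alpha}(y-x)\abs{y-x}^{-n-\alpha-1}$ is bounded on $\Omega$ (since $\dist(\overline{\Omega},(\Omega')^c)>0$) and $\Omega$ is bounded, so it lies in $L^{p'}(\Omega;\R^n)$; hence $w_j\weakto 0$ in $L^p(\Omega)$ gives $\na w_j(x)\to 0$ for a.e.\ $x\in(\Omega')^c$. For a $j$-independent majorant, use that $\abs{y-x}\ge c\,(1+\abs{x})$ for all $y\in\Omega$ and $x\in(\Omega')^c$ (as $\Omega$ is bounded and keeps a positive distance from $(\Omega')^c$), together with H\"older's inequality and $\sup_j\norm{w_j}_{L^p(\Omega)}<\infty$, to obtain $\abs{\na w_j(x)}\le C(1+\abs{x})^{-n-\alpha}$ for a.e.\ $x\in(\Omega')^c$ with $C$ independent of $j$; since $(n+\alpha)p\ge n+\alpha>n$, this majorant belongs to $L^p((\Omega')^c)$. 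Dominated convergence then yields $\na w_j\to 0$ in $L^p((\Omega')^c;\R^n)$, which is~\eqref{strongoncomplement}.

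The main obstacle is the step converting $\na w_j$, which is a priori only defined by duality against $\Ccr$-functions, into the naive singular-integral formula valid \emph{away from} the support of $w_j$; the Leibniz rule of Lemma~\ref{le:leibnizp} applied to a cut-off that is $1$ near infinity and $0$ on $\Omega$ is precisely what legitimizes this, with the extra terms $w_j\na\theta$ and $\nanl(w_j,\theta)$ recombining into the clean kernel above. The remaining estimates are routine. As an alternative to the dominated-convergence argument, once~\eqref{est_cutoff} gives $\norm{\na w_j}_{L^p((\Omega')^c;\R^n)}\le C\norm{\theta}_\infty^{1-\alpha}\Lip(\theta)^\alpha\norm{w_j}_p$, the compact embedding in Theorem~\ref{th:compactness} provides $w_j\to 0$ in $L^p(\R^n)$ and finishes the proof directly for $p\in(1,\infty)$; the route via the explicit kernel has the advantage of also covering $p=1$.
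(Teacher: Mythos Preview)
Your proof is correct. In fact, the ``alternative'' you sketch in the last paragraph is essentially the paper's own argument: the paper chooses a cut-off $\chi\in C_c^\infty(\Omega')$ with $\chi\equiv 1$ on $\Omega$, observes that $w_j=\chi w_j$, applies the Leibniz estimate~\eqref{est_cutoff} to bound $\norm{\na w_j}_{L^p((\Omega')^c)}=\norm{\na(\chi w_j)-\chi\na w_j}_{L^p((\Omega')^c)}\le C(\chi)\norm{w_j}_p$, and then invokes the compactness of Theorem~\ref{th:compactness} to get $\norm{w_j}_p\to 0$. That route is shorter, but as you correctly point out, Theorem~\ref{th:compactness} is only stated for $p\in(1,\infty)$, so the paper's proof does not cleanly cover the endpoint $p=1$ included in the lemma's hypotheses.

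Your main argument takes a genuinely different path: rather than collapsing everything through the estimate~\eqref{est_cutoff}, you expand the full Leibniz identity~\eqref{chiu} with the complementary cut-off $\theta=1-\rho$ and unwind it on $(\Omega')^c$ to obtain the explicit non-singular kernel representation $\na w_j(x)=\mu_{n,\alpha}\int_\Omega (y-x)\,w_j(y)\,|y-x|^{-n-\alpha-1}\dy$. This lets you use only the \emph{weak} $L^p(\Omega)$-convergence of $w_j$ (pairing against a fixed $L^{p'}$ kernel) together with a $j$-uniform integrable majorant, avoiding any appeal to compact embedding. That is what buys you the case $p=1$. The kernel bound $|y-x|\ge c(1+|x|)$ and the resulting $L^p$-majorant are routine but worth keeping, since they are what make the dominated-convergence step rigorous on the unbounded domain $(\Omega')^c$.
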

\begin{proof}
Due to linearity, it suffices to prove the statement for the special case $u=0$ and $g=0$. We consider therefore a sequence $(u_j)_j\subset S_0^{\alpha, p}(\Omega)$ with $u_j \weakto 0$ in $\Spo(\Omega)$. Let $\chi \in C_c^{\infty}(\Omega')$ be a cut-off function with $\chi \equiv 1$ on $\Omega$. Then, $u_j=\chi u_j$ for $j\in \N$ and $u_j \to 0$ in $L^p(\R^n)$ as a consequence of Theorem \ref{th:compactness}. Hence, we obtain from \eqref{est_cutoff} that
\begin{align*}
\norm{\na u_j}_{L^p((\Omega')^c;\R^n)}&=\norm{\na (\chi u_j)}_{L^p((\Omega')^c;\R^n)}\\
&\leq \norm{\na (\chi u_j) - \chi\na u_j}_p\leq C({\chi})\norm{u_j}_p \to 0\quad \text{as $j\to \infty$,}
\end{align*}
which yields~\eqref{strongoncomplement}. 
\end{proof}

\section{Connections between classical and fractional Sobolev spaces}\label{sec:connections}

In this section, we develop the primary tool for this paper, that is, a relation between the classical and fractional gradients in the context of Sobolev functions.
This connection relies on the properties of the Riesz potential and its inverse, the fractional Laplacian. Considering $\varphi\in C_c^\infty(\R^n)$, we know from~\eqref{eq:rieszpotentialcharacterization} and~\eqref{eq:composition} that
\begin{align}\label{connectionsCcinfty}
\nabla I_{1-\alpha} \varphi=\na \varphi\qquad \text{and}\qquad \na \dac \phi=\nabla \varphi.
\end{align} 
These identities enable us to express the fractional gradient of a function as the gradient of another function and vice versa. The goal is now to generalize this tool to the setting of Sobolev spaces. 

Before we present precise statements, the following short calculation gives some basic intuition. Suppose that 
$u\in L^1(\R^n)+L^{\infty}(\R^n)$ with $\na u\in L^1_{loc}(\R^n;\R^n)$
has a well-defined locally integrable Riesz potential $I_{1-\alpha}u$, cf.~\eqref{eq:rieszpotentialwelldefined}. Then, for any $\phi \in \Ccr$,
\begin{equation}
  \int_{\R^n} I_{1-\alpha} u\nabla \phi \dx=\int_{\R^n}uI_{1-\alpha}\nabla \phi \dx=\int_{\R^n}u\na \phi \dx = - \int_{\R^n}\na u \phi \dx,
\label{eq:fubini}
\end{equation}
where the first equality uses Fubini's theorem to shift the convolution to $\nabla \phi$, the second follows from~\eqref{eq:rieszpotentialcharacterization}, and the third is due to~Definition~\ref{def:weakRiesz}. This shows $\nabla I_{1-\alpha} u=\na u$, generalizing the first identity in~\eqref{connectionsCcinfty}. For the fractional Laplacian one uses a similar approach via duality to extend the identity in \eqref{connectionsCcinfty}.
The following proposition is similar in spirit to \cite[Lemma 3.28]{Comi1}, where instead of Sobolev spaces, the authors consider spaces with bounded fractional variation and establish a correspondence between classical and fractional variations.
\begin{proposition}
\label{prop:connectionriesz}
\label{prop:connectionfraclaplacian}
Let $\alpha \in (0,1)$ and $p \in [1,\infty]$. Then the following two statements hold: 
\begin{enumerate}
\item[$(i)$] For every $u \in \Sp(\R^n)$, there exists a $v \in W^{1,p}_{loc}(\R^n)$ such that 
$\nabla v= \na u$ on $\R^n$.
\item[$(ii)$]
For every $v\in W^{1,p}(\R^n)$, one has that 
\begin{center} $u=\dac v \in \Sp(\R^n)$\end{center} satisfies 
$\nabla^\alpha u= \nabla v$ on $\R^n$ and
\begin{align}\label{bound}
\norm{u}_p\leq C(n, \alpha)\norm{v}_p^{1-\frac{1-\alpha}{2}}\norm{\nabla v}_p^{\frac{1-\alpha}{2}}.
\end{align}
\end{enumerate}
\end{proposition}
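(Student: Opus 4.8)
I would prove $(i)$ and $(ii)$ separately, in each case bootstrapping from the identities \eqref{connectionsCcinfty} available on $C_c^\infty(\R^n)$. For $(i)$, the obvious candidate $v=I_{1-\alpha}u$ need not be globally defined, since the decay condition \eqref{eq:rieszpotentialwelldefined} may fail for large $p$; instead I would characterize ``$\na u$ is a gradient'' intrinsically through a vanishing-curl condition and then invoke the Poincar\'e lemma. The starting observation is that $\na$ commutes with classical partial derivatives on test functions: for $\phi\in C_c^\infty(\R^n)$, since convolution commutes with $\partial_i$, relation \eqref{eq:rieszpotentialcharacterization} gives $\na\partial_i\phi=\nabla I_{1-\alpha}\partial_i\phi=\partial_i\nabla I_{1-\alpha}\phi=\partial_i\na\phi$, so in particular the $j$-th component $\na_j\partial_i\phi=\partial_i\partial_j I_{1-\alpha}\phi$ is symmetric in $i$ and $j$. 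Testing the distributional curl of $\na u$ against $\phi\in C_c^\infty(\R^n)$ and applying Definition~\ref{def:weakRiesz} componentwise yields
\[
\int_{\R^n}(\na u)_j\,\partial_i\phi\dx=-\int_{\R^n}u\,\na_j\partial_i\phi\dx=-\int_{\R^n}u\,\na_i\partial_j\phi\dx=\int_{\R^n}(\na u)_i\,\partial_j\phi\dx,
\]
so $\na u\in L^p(\R^n;\R^n)\subset L^1_{loc}(\R^n;\R^n)$ is curl-free in the sense of distributions. Since $\R^n$ is simply connected, the Poincar\'e lemma for distributions produces $v\in L^1_{loc}(\R^n)$, unique up to an additive constant, with $\nabla v=\na u$; as $\nabla v\in L^p(\R^n;\R^n)$, this $v$ lies in $W^{1,p}_{loc}(\R^n)$. (When \eqref{eq:rieszpotentialwelldefined} does hold, the computation \eqref{eq:fubini} shows $v=I_{1-\alpha}u$ works.)

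For $(ii)$, I would first establish that $\dac\colon W^{1,p}(\R^n)\to L^p(\R^n)$ is bounded with estimate \eqref{bound}. For $\phi\in C_c^\infty(\R^n)$, writing $\dac\phi$ via its singular-integral representation (Definition~\ref{def:fracLaplacian} with $\alpha$ replaced by $1-\alpha$) and applying Minkowski's integral inequality as in \eqref{eq:nanlbound} gives
\[
\norm{\dac\phi}_p\leq C(n,\alpha)\int_{\R^n}\frac{\norm{\phi(\cdot+h)-\phi}_p}{\abs{h}^{n+(1-\alpha)}}\,dh.
\]
Splitting the integral at a radius $\delta>0$, bounding $\norm{\phi(\cdot+h)-\phi}_p\leq\abs{h}\,\norm{\nabla\phi}_p$ for $\abs{h}\leq\delta$ and $\norm{\phi(\cdot+h)-\phi}_p\leq 2\norm{\phi}_p$ for $\abs{h}>\delta$, and then balancing the two contributions by an appropriate choice of $\delta$, produces an interpolation inequality of the form \eqref{bound}. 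In particular $\dac$ extends continuously to all of $W^{1,p}(\R^n)$, and for $v\in W^{1,p}(\R^n)$ the function $u:=\dac v\in L^p(\R^n)$ satisfies \eqref{bound}.

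It then remains to identify $\na u=\nabla v$. For $p\in[1,\infty)$ I would argue by density, using that $C_c^\infty(\R^n)$ is dense in $W^{1,p}(\R^n)$: pick $\phi_k\in C_c^\infty(\R^n)$ with $\phi_k\to v$ in $W^{1,p}(\R^n)$; then $\dac\phi_k\to u$ in $L^p(\R^n)$ by the bound above, while $\na(\dac\phi_k)=\nabla\phi_k\to\nabla v$ in $L^p(\R^n;\R^n)$ by the composition rule \eqref{eq:composition}; passing to the limit in the defining relation \eqref{wRieszgrad} — legitimate because $L^p(\R^n)\subset L^1(\R^n)+L^\infty(\R^n)$ and $\na\psi\in\mathcal{T}(\R^n;\R^n)\subset L^{p'}(\R^n;\R^n)$ for $\psi\in C_c^\infty(\R^n)$ — shows $\na u=\nabla v$, and hence $u\in\Sp(\R^n)$. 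For $p=\infty$, where density fails, $\dac v$ is well-defined pointwise for $v\in W^{1,\infty}(\R^n)=\Lipb(\R^n)$, and I would obtain the identity either by a direct duality argument, extending the self-adjointness \eqref{eq:duality2} and the composition rule \eqref{eq:composition} so as to test against functions in $\mathcal{T}(\R^n)$, or by approximating $v$ through mollification and cut-off (with controlled sup-norm and Lipschitz constant) and passing to the limit locally.

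The step I expect to be most delicate is the endpoint $p=\infty$ in $(ii)$: the lack of density forces either a careful extension of the duality identities to the class $\mathcal{T}(\R^n)$, keeping track of the fact that $\na$ and $\dac$ do not preserve compact support, or a verification that $\dac$ interacts well with the combined mollification--truncation scheme. Part $(i)$, by contrast, becomes routine once one abandons the explicit Riesz-potential candidate in favour of the curl/Poincar\'e route.
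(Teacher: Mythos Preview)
Your proposal is correct. Part $(ii)$ is essentially the paper's argument: the interpolation bound is derived the same way, and where you pass to the limit by density of $C_c^\infty$ in $W^{1,p}$ for finite $p$, the paper instead extends the duality \eqref{eq:duality2} to $W^{1,p}\times W^{1,p'}$ (and to $W^{1,1}\times\Lipb$ at the endpoints) and computes $\int u\,\na\phi=\int v\,\dac\na\phi=\int v\,\nabla\phi$ in one stroke. Part $(i)$, however, takes a genuinely different route. The paper never invokes the curl-free characterization; it works on each ball $B(0,R)$ by approximation: for $p\in(1,\infty)$ it takes $u_j\in C_c^\infty$ with $u_j\to u$ in $\Sp(\R^n)$ (Theorem~\ref{th:density}), sets $w_j=I_{1-\alpha}u_j$ so that $\nabla w_j=\na u_j$, subtracts means, and extracts a weak $W^{1,p}(B(0,R))$-limit via Poincar\'e--Wirtinger; for $p=\infty$ the density step is replaced by cut-offs $\chi_j u$ together with Lemma~\ref{le:leibnizp}; for $p=1$ the Riesz potential of $u$ itself exists and \eqref{eq:fubini} applies directly. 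Your argument is more uniform in $p$ and avoids the density theorem altogether, while the paper's construction keeps the explicit link to $I_{1-\alpha}$, which is the natural candidate whenever it is defined (cf.\ Remark~\ref{rem3}\,b)). One small point to tighten in your write-up: the distributional Poincar\'e lemma produces $v$ a priori only as a distribution; you then need the standard fact that a distribution whose gradient lies in $L^p_{loc}$ is itself in $L^p_{loc}$.
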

\begin{proof}
\textit{Part $(i)$.} Let $u \in \Sp(\R^n)$. It is enough to find for every $R>0$ a $v \in W^{1,p}(B(0,R))$ such that $\nabla v=\na u$ on $B(0,R)$. We discuss the cases $p\in (1, \infty)$, $p=\infty$ and $p=1$ separately. 
In the first two cases, the idea is to approximate $u$ with functions that have a well-defined Riesz potential and subsequently use the calculation from \eqref{eq:fubini}. \medskip

\textit{Case $p\in (1, \infty)$.}  We approximate $u$ by a sequence $(u_j)_j \subset \Ccr$ in the $\Sp(\R^n)$-norm according to Theorem \ref{th:density} and define  for $j\in \N$, 
\begin{align*}
w_j=I_{1-\alpha}u_j \in L^\infty(\R^n)\cap C^\infty(\R^n),
\end{align*}
cf.~Section~\ref{sec:rieszpotential}. 
Moreover, it follows from (\ref{eq:rieszpotentialcharacterization}) that $w_j \in W^{1,p}(B(0,R))$ satisfies $\nabla w_j=\na u_j$. By subtracting the mean values, we define 
\[
v_j=w_j-\dashint_{B(0,R)}w_j\dy
\]
for $j\in \N$, and observe that $\nabla v_j=\na u_j$ and
\begin{align*}
\norm{v_j}_{L^p(B(0,R))} \leq C(R,n,p)\norm{\na u_j}_p 
\end{align*} 
due to the Poincar\'{e}-Wirtinger inequality. 
Hence, $(v_j)_j$ is bounded in $W^{1,p}(B(0,R))$, so that, up to taking a (non-relabeled) subsequence, $v_j \weakto v$ in $W^{1,p}(B(0,R))$ for some $v\in W^{1,p}(B(0, R))$. In combination with $\nabla v_j =\na u_j \to \na u$ in $L^p(B(0, R))$, we conclude that $\nabla v=\na u$ on $B(0,R)$.\medskip

\textit{Case $p=\infty$.} Consider a sequence of cut-off functions $(\chi_j)_j \subset \Ccr$ for $j\in \N$ such that
\[
\chi_{j}|_{B(0,R)}\equiv 1, \ \ 0\leq \chi_j\leq 1 \ \ \text{and} \ \ \Lip(\chi_j)\leq 1/j.
\]
Then, we infer from Lemma \ref{le:leibnizp} that $u_j:=\chi_ju \in \Sinf(\R^n)$ has compact support and satisfies
\begin{align}\label{est2}
\norm{\na u-\na u_j}_{L^{\infty}(B(0,R))}&\leq C\norm{\chi_j}_{\infty}^{1-\alpha}\Lip(\chi_j)^{\alpha}\norm{u}_{\infty}\leq C(1/j)^{\alpha}\norm{u}_{\infty}.
\end{align}
With $w_j:=I_{1-\alpha}u_j\in L^\infty(\R^n)$ for $j\in \N$,
we have by \eqref{eq:fubini} that $w_j \in W^{1,\infty}(B(0,R))$ with $\nabla w_j=\na u_j$ on $B(0,R)$. Now one can proceed similarly to the case $p\in (1, \infty)$ by defining
\[
v_j=w_j-\dashint_{B(0,R)}w_j\dy
\] 
for $j\in \N$ and by observing that the derivative of the weak$^\ast$ limit $v$ of $(v_j)_j\subset W^{1,\infty}(B(0,R))$ coincides with $\na u$ on $B(0, R)$, since $\na u_j \to \na u$ in $L^\infty(B(0,R))$ as $j\to \infty$ by~\eqref{est2}. \medskip

\textit{Case $p=1$.} This is a simple consequence of \eqref{eq:fubini}, cf.~Remark~\ref{rem3}\,b).\medskip 

\textit{Part (ii).} 
The fractional Laplacian as introduced in Definition~\ref{def:fracLaplacian} can be extended to a bounded linear operator $\dac :W^{1,p}(\R^n)\to L^p(\R^n)$ that satisfies the estimate
\begin{align}\label{bound}
\norm{\dac v}_p\leq C(n, \alpha)\norm{v}_p^{1-\frac{1-\alpha}{2}}\norm{\nabla v}_p^{\frac{1-\alpha}{2}}
\end{align}
for all $v\in W^{1,p}(\R^n)$. 
For $p\in [1, \infty)$, this is an immediate consequence of~\cite[Lemma A.4]{Comi3}, while in the case $p=\infty$, where $W^{1,\infty}(\R^n)=\Lipb(\R^n)$, we use the representation of $\dac$ in \eqref{fracLaplacian} and the bound~\eqref{bound} follows analogously to~\cite[Step~1, Lemma 2.2]{Comi2}.

   Setting $u=\dac v$, we obtain for any $\phi \in \Ccr$ that
\begin{align}\label{00}
\int_{\R^n} u \na \phi \dx=\int_{\R^n} v\, \dac \na \phi \dx 
=\int_{\R^n} v\nabla \phi \dx;
\end{align}
indeed, the first equality in the case $p \in (1, \infty)$ results from the duality of the fractional Laplacian in \eqref{eq:duality2} extended to pairs of functions $W^{1,p}(\R^n)$ and $W^{1,p'}(\R^n)$ via density (see \cite[Lemma A.5]{Comi3}), which again relies on the boundedness of $\dac$, and the observation that $\na \phi \in \mathcal{T}(\R^n;\R^n) \subset W^{1,p'}(\R^n;\R^n)$; for $p\in \{1,\infty\}$, it suffices to extend the duality \eqref{eq:duality2} to pairs in $\Lipb(\R^n)$ and $W^{1,1}(\R^n)$ in a similar way, owing to the observation $W^{1,\infty}(\R^n)=\Lipb(\R^n)$;
the second equality in~\eqref{00} uses the identity $\dac \na=\nabla$ on $\Ccr$ from \eqref{eq:composition}. This proves $\na u = \nabla v$ as stated.
\end{proof}

\begin{remark}\label{rem3}\leavevmode
a) Part $(ii)$ states in particular that $\dac:W^{1,p}(\R^n) \to S^{\alpha, p}(\R^n)$ is a bounded linear operator, and as such, weakly continuous. \medskip

b) The proof of part $(i)$ can be simplified in the special case when $u \in \Sp(\R^n)$ has a well-defined 
Riesz potential $I_{1-\alpha}u\in L^p_{loc}(\R^n)$ by setting $v=I_{1-\alpha}u$, which satisfies $\nabla v=\na u$ on $\R^n$ in light of~\eqref{eq:fubini}.
Recalling~\eqref{eq:rieszpotentialwelldefined}, we find this approach applicable in the regime $p <n/(1-\alpha)$, but it fails in general, as e.g.~$u(x)=\min\{1,\abs{x}^{-(1-\alpha)}\}$ for $x\in \R^n$ shows. Indeed, when $p>n/(1-\alpha)$ we find $u \in W^{1,p}(\R^n) \subset \Sp(\R^n)$, but $u$ does not have a well-defined Riesz potential by checking the criterion \eqref{eq:rieszpotentialwelldefined}. \medskip

c) We point out that Proposition~\ref{prop:connectionriesz}\,$(i)$ cannot be improved to finding for a given $u\in S^{\alpha, p}(\R^n)$ a function $v\in W^{1,p}(\R^n)$, so in particular, $v\in L^p(\R^n)$, with the stated properties.

For a counterexample in the case $p\in (1,\infty)$,
one can take, in view of a), any $u \in \Sp(\R^n)$ with a well-defined Riesz potential in $L_{loc}^p(\R^n)$ such that neither $I_{1-\alpha}u$, nor any translation of it, lies in $L^p(\R^n)$.
For example, it is straightforward to check that $u(x)=\min\{1,\abs{x}^{-n/p+\alpha-1}\}$ for $x\in \R^n$ satisfies $u\in W^{1,p}(\R^n)\subset S^{\alpha, p}(\R^n)$ and $I_{1-\alpha}u(x)\geq C(n, \alpha) |x|^{-n/p}$ for $x\in B(0,1)^c$.

For $p=\infty$, we pursue a different approach based on truncation to find functions $u \in \Sinf(\R^n)$ and $v \in W^{1,\infty}_{loc}(\R^n)\setminus L^\infty(\R^n)$ with $\nabla v=\na u$. The following construction is inspired by \cite[Lemma 3.1]{Bellido}.
Consider for fixed $\beta\in (0, 1-\alpha)$ the function
\[
v(x)=\begin{cases}
\abs{x} &\text{for $\abs{x} \leq 1$,}\\
\abs{x}^{\beta} &\text{for $\abs{x}>1$}, 
\end{cases}\qquad x\in \R^n,
\]
and let $v_{j}=\min\{v,j\}$ for $j\in \N$; by construction, $(v_j)_j \subset W^{1, \infty}(\R^n)\cap C^{0, \beta}(\R^n)$ is a sequence with uniformly bounded Lipschitz and $\beta$-H\"older constants. 
Hence, with $u_j:=\dac v_j \in \Sinf(\R^n)$, we obtain that
\begin{align*}
\abs{u_j}&\leq C(n, \alpha) \int_{\R^n} \frac{\abs{v_j(\cdot +h)-v_j}}{\abs{h}^{n+1-\alpha}}\,dh \\ &\leq C(n, \alpha)\Bigl(\int_{B(0,1)} \frac{1}{\abs{h}^{n-\alpha}}\,dh+\int_{B(0,1)^c} \frac{1}{\abs{h}^{n+1-\alpha-\beta}}\,dh\Bigr)<\infty
\end{align*}
for all $j\in \N$, where the Lipschitz and $\beta$-H\"older property of $v_j$ have been used to estimate the integrals over $B(0,1)$ and $B(0,1)^c$, respectively. Therefore, $(u_j)_j$ is a bounded sequence in $\Sinf(\R^n)$, so that $u_j \starto u$ in $L^{\infty}(\R^n)$ and
 $\na u_j \starto \na u$ in $L^{\infty}(\R^n;\R^n)$ as $j \to \infty$. Since $\na u_j=\nabla v_j$ for all $j\in \N$ due to Proposition~\ref{prop:connectionriesz}\,$(ii)$, we conclude that $\na u= \nabla v$, as required.
\end{remark}

Proposition~\ref{prop:connectionriesz} allows us to translate classical gradients of Sobolev functions into fractional gradients and the other way around, but it still leaves a small gap related to the issue of local integrability, cf.~Remark~\ref{rem3}\,c). 
If we focus on periodic Lipschitz functions, though, there is the following complete identification. 
\begin{proposition}
Let $\alpha \in (0,1)$ and $Q=(0,1)^n$. Then for every $v\in \Winfper$ there exists $u \in \Sinfper$ such that $\na u=\nabla v$ on $\R^n$ and vice versa.
\label{prop:periodicconnection}
\end{proposition}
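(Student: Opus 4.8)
The plan is to reduce both implications to the already-established correspondence for non-periodic functions in Proposition~\ref{prop:connectionriesz}, using periodicity to upgrade $W^{1,p}_{loc}$ to honest (periodic, hence bounded) Lipschitz regularity and to pin down the free additive constant. For the direction from $v$ to $u$: given $v \in \Winfper$, we have in particular $v \in W^{1,\infty}(\R^n) = \Lipb(\R^n)$, so Proposition~\ref{prop:connectionriesz}\,$(ii)$ applies directly and yields $u = \dac v \in \Sinf(\R^n)$ with $\na u = \nabla v$ on $\R^n$. The only thing to check is that this $u$ is $Q$-periodic. This should follow from the translation invariance of the fractional Laplacian: writing $\tau_k v = v(\,\cdot\, + k)$ for $k \in \Z^n$, one has $\dac(\tau_k v) = \tau_k(\dac v)$ directly from the integral representation~\eqref{fracLaplacian} by the change of variables $h \mapsto h$ in the difference quotient, and since $\tau_k v = v$ by periodicity, we get $\tau_k u = u$, i.e.\ $u \in \Sinfper$.

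For the converse direction, given $u \in \Sinfper \subset \Sinf(\R^n)$, Proposition~\ref{prop:connectionriesz}\,$(i)$ furnishes some $v \in W^{1,\infty}_{loc}(\R^n)$ with $\nabla v = \na u$ on all of $\R^n$. Since $\na u$ is $Q$-periodic (same translation argument as above, now applied to the weak fractional gradient via its defining identity~\eqref{wRieszgrad}, or simply noting $\na u \in L^\infty$ is the a.e.\ pointwise limit of periodic smooth approximants' gradients), the function $w_k := \tau_k v - v$ satisfies $\nabla w_k = \tau_k(\na u) - \na u = 0$, so $w_k$ is (a.e.\ equal to) a constant $c_k \in \R^n$... wait, $w_k$ is scalar, so $c_k \in \R$, for each $k \in \Z^n$. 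The assignment $k \mapsto c_k$ is additive ($c_{k+l} = c_k + c_l$, by composing translations), hence determined by its values on $e_1, \dots, e_n$; subtracting the linear function $x \mapsto \sum_i c_{e_i} x_i$ from $v$ we obtain a genuinely $Q$-periodic representative, still with gradient $\na u$. A periodic $W^{1,\infty}_{loc}$ function is bounded and globally Lipschitz, so this modified $v$ lies in $\Winfper$, as required.

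The implicit claim that $\nabla v = \na u$ everywhere on $\R^n$ (not just on each ball) is already part of the statement of Proposition~\ref{prop:connectionriesz}\,$(i)$, so no additional gluing argument is needed. The step I expect to require the most care is verifying the periodicity of $\na u$ and of $\dac v$ rigorously — i.e.\ justifying that translation commutes with these nonlocal operators at the level of $L^\infty$ / weak-gradient objects rather than just for smooth test functions. For $\dac v$ this is immediate from the pointwise formula~\eqref{fracLaplacian} since $v \in \Lipb(\R^n)$ makes the integral absolutely convergent pointwise; for $\na u$ with $u$ merely in $\Sinf(\R^n)$ one argues at the level of distributions: for $\phi \in \Ccr$ and $k \in \Z^n$, $\int \na u \,(\tau_{-k}\phi) = -\int u\, \na(\tau_{-k}\phi) = -\int u\, \tau_{-k}(\na\phi) = -\int (\tau_k u)\,\na\phi = -\int u\,\na\phi = \int \na u\,\phi$, using~\eqref{wRieszgrad}, the translation invariance of $\na$ on $\Ccr$, a change of variables, and periodicity of $u$; this shows $\tau_k(\na u) = \na u$ in $L^\infty$. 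Everything else is bookkeeping: extracting the additive cocycle $(c_k)$, checking additivity, and subtracting off the corresponding linear function.
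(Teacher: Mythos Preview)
Your first direction ($v \to u$) is fine and matches the paper exactly: $u = \dac v$, periodicity from translation invariance of the pointwise formula~\eqref{fracLaplacian}.

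The second direction has a genuine gap. You correctly extract the additive cocycle $c_k = \tau_k v - v$ and propose to subtract the linear function $\ell(x) = \sum_i c_{e_i} x_i$ to obtain a periodic $v' = v - \ell$. But subtracting a nontrivial linear function \emph{changes the gradient}: $\nabla v' = \nabla v - (c_{e_1},\dots,c_{e_n})$, so your claim ``still with gradient $\na u$'' is false unless you know a priori that $c_{e_i} = 0$ for every $i$. You never address this, and it is not automatic --- the function $v$ handed to you by Proposition~\ref{prop:connectionriesz}\,$(i)$ is only determined up to an additive constant on each ball, not up to an affine function, so there is no freedom left to absorb a nonzero linear part.

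What is needed is precisely the fact that $\int_Q \na u \, dx = 0$ for every $u \in \Sinfper$, which (via Gauss--Green on $Q$) forces the vector $(c_{e_1},\dots,c_{e_n})$ to vanish. The paper proves this with an oscillation argument: set $u_j(x) = j^{-\alpha} u(jx)$, so that $\na u_j = (\na u)(j\,\cdot\,)$; on one hand $u_j \to 0$ in $L^\infty$ forces $\na u_j \weaklystar 0$ via~\eqref{wRieszgrad}, and on the other hand periodic oscillation gives $\na u_j \weaklystar \int_Q \na u \, dy$. This is the missing idea in your argument.
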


\begin{proof}
First, let $v \in \Winfper\subset W^{1, \infty}(\R^n)$ be given. We set $u=\da{\frac{1-\alpha}{2}}v\in S^{\alpha, p}(\R^n)$, which, due to Proposition \ref{prop:connectionfraclaplacian}\,$(ii)$, satisfies the desired relation 
$\na u =\nabla v$ on $\R^n$. 
Given the periodicity of $v$, it is straightforward to verify in formula \eqref{fracLaplacian} for the fractional Laplacian of bounded Lipschitz maps that $u$ is $Q$-periodic, and thus, $u\in S^{\alpha, \infty}_{per}(Q)$.

Now, consider $u \in \Sinfper \subset S^{\alpha, \infty}(\R^n)$. By Proposition \ref{prop:connectionriesz}\,$(i)$, there is $v\in W^{1,\infty}_{loc}(\R^n)$ such that $\nabla v=\na u$ on $\R^n$. 
This implies, in particular, that the weak gradient of $v$ is $Q$-periodic on $\R^n$. To see that  $v$ itself is $Q$-periodic, we argue that, since any function in $W^{1,\infty}_{loc}(\R^n)$ with zero gradient is constant, there is a vector $a \in \R^n$ such that  
\begin{align}\label{55}
v(x+e_i)-v(x)=a_i\quad \text{ for all $x \in Q$ and $i=1,\dots, n$;}
\end{align}
here, $e_i$ denotes the $i$th standard unit vector in $\R^n$. The periodicity of $v$, follows, if one can show that $a=0$. 
To this end, we infer from the Gauss-Green theorem together with~\eqref{55} that
\begin{align}\label{11}
\int_{Q}\na u\dx=\int_{Q}\nabla v\dx=\int_{\partial Q}v\nu \, dS=a,
\end{align}
where $\nu$ is the outer unit normal vector field of the cube $Q$. 
By the $\alpha$-homogeneity of the fractional gradient, the sequence $(u_j)_j$ with $u_j(x)=\frac{1}{j^{\alpha}}u(jx)$ for $x\in \R^n$ and $j\in \N$ is bounded in $\Sinf(\R^n)$ and thus  up to subsequence $\na u_j$ converges weak* in $L^{\infty}(\R^n;\R^n)$. But we can calculate via the weak definition of the fractional gradient that for all $\phi \in \Ccr$
\[
\lim_{j \to \infty} \int_{\R^n} \na u_j \phi\dx=-\lim_{j \to \infty} \int_{\R^n} u_j \na \phi\dx =0
\]
since $u_j \to 0$ in $L^{\infty}(\R^n)$ so that
\begin{align}\label{22}
\na u_j\weaklystar 0 \quad \text{in} \  L^{\infty}(\R^n;\R^n).
\end{align} 
On the other hand, since $\nabla^\alpha u_j = \nabla^\alpha u(j\,\cdot)$ for $j\in \N$, the sequence $(\nabla^\alpha u_j)_j$ oscillates periodically and therefore satisfies 
\begin{align}\label{33}
 \na u_j\weaklystar \int_{Q}\na u\, dy \quad \text{in} \ L^{\infty}(\R^n;\R^n).
\end{align}
Combining~\eqref{22} and~\eqref{33} with~\eqref{11} finally gives $a= \int_{Q}\na u\, dy=0$. This shows that $v\in W^{1, \infty}_{per}(Q)$ and finishes the proof. 
\end{proof}

\section{Characterization of weak lower semicontinuity}\label{sec:lsc}

The main focus of this section is to provide a proof of Theorem~\ref{th:characterization}. This follows as a corollary of the combined statements of Theorem \ref{th:sufficientcomplementary} and \ref{th:necessarygeneral}, where we prove that quasiconvexity is necessary and sufficient for the weak lower semicontinuity of the functionals in~\eqref{eq:functional1}, respectively. The sufficiency follows rather quickly from well-known methods, using the tools from Section \ref{sec:connections} and the strong convergence of the fractional gradients outside $\Omega$ from Lemma \ref{le:strongoutside}. The necessary condition is more involved, as it requires the use of cut-off arguments and the careful construction of a function in the complementary-value space whose fractional gradient takes a prescribed value at a point.

We recall the definition of quasiconvexity (in the sense of Morrey \cite{Mor}). A Borel measurable function $h:\Rmn \to \R$ is called quasiconvex  if for any $A\in \Rmn$,
\begin{align}\label{qc1}
h(A)\leq \int_{Q} h(A+\nabla \phi)\dy \qquad\text{for all $\phi \in W^{1,\infty}_0(Q;\R^m)$,}
\end{align}
with $Q=(0,1)^n$.
Equivalently, by \cite[Proposition 5.13]{Dacorogna}, the class of test fields can be replaced with $Q$-periodic Lipschitz functions, that is, $h$ is quasiconvex if and only if for any $A\in \Rmn$,
\begin{align}\label{qc2}
h(A)\leq \int_{Q} h(A+\nabla \phi)\dy\qquad\text{for all $\phi \in  \Winfperm$.} 
\end{align}

The next theorem shows that the functionals in \eqref{eq:functional1} are weakly lower semicontinuous if the integrand $f$ is quasiconvex in its third variable. Note that we do not require quasiconvexity outside~$\Omega$. 
\begin{theorem}[Sufficiency of quasiconvexity]
\label{th:sufficientcomplementary}
Let $\alpha \in (0,1)$, $p \in (1,\infty)$, $\Omega \subset \R^n$ open and bounded and $g \in \Sp(\R^n;\R^m)$. Further, let 
\[
\F_{\alpha}(u)=\int_{\R^n} f(x,u(x),\na u(x))\dx, \qquad u \in \Spgm,
\]
where $f:\R^n\times\R^m\times \Rmn \to \R$ is a normal integrand satisfying 
\begin{align*}
0\leq f(x,z,A) \leq a(x)+ C(\abs{z}^p+\abs{A}^p) \quad \text{for a.e.~$x\in \R^n$ and for all} \ (z, A)\in \R^m\times \Rmn,
\end{align*}
with $a\in L^1(\R^n)$ and a constant $C>0$. If $A \mapsto f(x,z,A)$ is quasiconvex for a.e.~$x\in \Omega$ and all $z \in \R^m$
then the functional $\F_{\alpha}$ is (sequentially) weakly lower semicontinuous on $\Spgm$.
\end{theorem}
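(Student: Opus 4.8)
\smallskip

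The plan is to reduce the fractional lower semicontinuity statement to the classical one via the translation machinery of Section~\ref{sec:connections}, using Lemma~\ref{le:strongoutside} to handle the region outside $\Omega$ where quasiconvexity is not assumed. Let $u_j \weakto u$ in $\Spgm$; I must show $\liminf_{j\to\infty}\Fcal_\alpha(u_j)\geq\Fcal_\alpha(u)$. Split the integral as $\int_{\R^n}=\int_{\Omega'}+\int_{(\Omega')^c}$ for a bounded open $\Omega'$ with $\Omega\Subset\Omega'$. On $(\Omega')^c$, Lemma~\ref{le:strongoutside} gives $\na u_j\to\na u$ strongly in $L^p((\Omega')^c;\R^{m\times n})$, and $u_j\to u$ strongly in $L^p(\R^n;\R^m)$ by Theorem~\ref{th:compactness}; since $f$ is a normal integrand with the stated $p$-growth, a standard continuity-of-Nemytskii-operator argument (extract a subsequence converging a.e., invoke a generalized dominated convergence / Vitali argument using $p$-equi-integrability of the strongly convergent sequences) shows $\int_{(\Omega')^c}f(x,u_j,\na u_j)\dx\to\int_{(\Omega')^c}f(x,u,\na u)\dx$. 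So the only real work is the contribution on $\Omega'$, and it suffices to prove weak lower semicontinuity of $u\mapsto\int_{\Omega'}f(x,u,\na u)\dx$; but here $f(x,z,\cdot)$ is quasiconvex only for a.e.\ $x\in\Omega$, not on all of $\Omega'$, so I will in fact want $\Omega'$ to shrink toward $\Omega$: choose a decreasing sequence $\Omega'_k\downarrow\Omega$ and a diagonal argument, or more cleanly, prove the liminf inequality on $\Omega$ itself and combine with strong convergence on $\Omega^c$ after first passing to $\Omega'\downarrow\Omega$ to make the ``bad'' set $\Omega'\setminus\Omega$ of arbitrarily small measure (using $0\le f$ and $p$-equi-integrability to show its contribution is uniformly small).

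\smallskip

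For the contribution on $\Omega$, the strategy is: by Proposition~\ref{prop:connectionriesz}\,$(i)$ applied on a large ball $B\supset\overline{\Omega'}$, for each $j$ there is $v_j\in W^{1,p}_{\loc}(\R^n)$ with $\nabla v_j=\na u_j$ on $\R^n$; normalize $v_j$ by subtracting its mean over $B$ so that, by Poincaré–Wirtinger, $(v_j)_j$ is bounded in $W^{1,p}(B;\R^m)$, hence (subsequence) $v_j\weakto v$ in $W^{1,p}(B;\R^m)$ with $\nabla v=\na u$ there. On $\Omega$ we then have $\na u_j=\nabla v_j\weakto\nabla v=\na u$, so the integrand $f(x,u_j(x),\na u_j(x))=f(x,u_j(x),\nabla v_j(x))$ on $\Omega$ has exactly the structure to which the classical Acerbi–Fusco lower semicontinuity theorem applies (normal integrand, $p$-growth from above, nonnegativity, quasiconvexity in the gradient variable for a.e.\ $x\in\Omega$, with $u_j\to u$ strongly in $L^p(\Omega;\R^m)$ and $\nabla v_j\weakto\nabla v$ in $L^p(\Omega;\R^{m\times n})$). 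That gives $\liminf_j\int_\Omega f(x,u_j,\na u_j)\dx\ge\int_\Omega f(x,u,\na u)\dx$. Combining with the $(\Omega')^c$ piece and letting $\Omega'\downarrow\Omega$ yields the claim; the subsequence extracted along the way is handled by the usual argument that it suffices to prove the liminf inequality along an arbitrary subsequence realizing the liminf.

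\smallskip

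The main obstacle, and the one requiring care, is the interface region $\Omega'\setminus\Omega$: quasiconvexity is not available there, yet $\na u_j$ need not converge strongly right up to $\partial\Omega$ — Lemma~\ref{le:strongoutside} only gives strong convergence on $(\Omega')^c$ for $\Omega\Subset\Omega'$, i.e.\ on closed sets not touching $\overline\Omega$. The resolution is to exploit $f\ge 0$ together with $p$-equi-integrability of $(\na u_j)_j$ and $(u_j)_j$ (the latter from strong $L^p$ convergence, the former from weak $L^p$ convergence via the Dunford–Pettis theorem): for $\Omega'$ chosen with $|\Omega'\setminus\Omega|$ small, the upper growth bound $f(x,z,A)\le a(x)+C(|z|^p+|A|^p)$ forces $\int_{\Omega'\setminus\Omega}f(x,u_j,\na u_j)\dx$ to be uniformly small, so dropping this nonnegative term costs nothing in the $\liminf$ and letting $|\Omega'\setminus\Omega|\to 0$ recovers the full integral over $\R^n$. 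A secondary technical point is that $v_j$ lives only in $W^{1,p}_{\loc}$, not $W^{1,p}(\R^n)$ (cf.\ Remark~\ref{rem3}\,c)), which is precisely why the argument is localized to a fixed large ball $B$ and the mean-value normalization is needed; this is routine but must not be skipped.
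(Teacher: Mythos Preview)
Your overall approach mirrors the paper's: split the integral into $\Omega$ and its complement, use Proposition~\ref{prop:connectionriesz}\,$(i)$ to convert $\na u_j$ into a classical gradient $\nabla v_j$ on (a neighborhood of) $\Omega$ and invoke a classical lower semicontinuity result there, use Lemma~\ref{le:strongoutside} for strong convergence outside $\Omega'$, and then let $\Omega'\downarrow\Omega$. The paper carries out the $\Omega$-step via Young measures and the Kinderlehrer--Pedregal Jensen inequality rather than citing Acerbi--Fusco directly, but this is a cosmetic difference.

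There is, however, a genuine error in your handling of the interface $\Omega'\setminus\Omega$. You assert that weak convergence $\na u_j\weakto\na u$ in $L^p$ implies $p$-equi-integrability of $(\na u_j)_j$ ``via the Dunford--Pettis theorem''. This is false: Dunford--Pettis characterizes relative weak compactness in $L^1$ by equi-integrability, but weak convergence in $L^p$ for $p>1$ gives only boundedness of $(|\na u_j|^p)_j$ in $L^1$, which is strictly weaker than equi-integrability (any concentrating sequence is a counterexample). Consequently your claim that $\int_{\Omega'\setminus\Omega}f(x,u_j,\na u_j)\,dx$ is \emph{uniformly small in $j$} is unfounded. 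Fortunately, this claim is also unnecessary. Since $f\ge 0$, one may simply discard the $\Omega'\setminus\Omega$ contribution on the $\liminf$ side to obtain
\[
\liminf_{j\to\infty}\F_\alpha(u_j)\ \ge\ \int_\Omega f(x,u,\na u)\,dx\ +\ \int_{(\Omega')^c}f(x,u,\na u)\,dx,
\]
and then let $\Omega'\downarrow\Omega$ on the right-hand side only, invoking monotone convergence for the fixed integrand $f(\cdot,u,\na u)\ge 0$. This is precisely how the paper closes the argument. Remove the Dunford--Pettis claim and the ``uniformly small'' assertion, and your proof is correct.
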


\begin{proof} We split $\F_{\alpha}$ into the integral over $\Omega$ and $\Omega^c$ and prove separately that
\begin{equation}\label{estsuff}
\int_{\Omega}f(x,u,\na u)\dx\leq \liminf_{j \to \infty} \int_{\Omega} f(x,u_j,\na u_j)\dx,
\end{equation}
and
\begin{equation}\label{estsuff2}
\int_{\Omega^c} f(x,u,\na  u)\dx \leq\liminf_{j \to \infty} \int_{\Omega^c}f(x,u_j,\na u_j)\dx,
\end{equation} 
for any sequence $(u_j)_j\subset \Spgm$ with $u_j \weakto u$ in $\Spgm$. 
In short, the idea for~\eqref{estsuff} is to reduce the problem to a classical weak lower semicontinuity result with the help of~Proposition \ref{prop:connectionriesz}\,$(i)$, while the key ingredient for~\eqref{estsuff2} is the strong convergence of the fractional gradients outside of $\Omega$, due to Lemma~\ref{le:strongoutside}. 

Let $(u_j)_j\subset \Spgm$ with $u_j \weakto u$ in $\Spgm$. Then also $u_j \to u$ in $L^p(\R^n)$ by the weak compactness in Theorem~\ref{th:compactness}. Moreover, Proposition \ref{prop:connectionriesz}\,$(i)$ allows us to find a sequence $(v_j)_j \subset W^{1,p}(\Omega;\R^m)$ and $v \in W^{1,p}(\Omega;\R^m)$ such that 
\begin{align}\label{vjuj}
\nabla v_j=\na u_j \text{ on $\Omega$ for all $j\in \N$} \quad  \text{and}\quad  \nabla v =\na u \text{ on $\Omega$.}
\end{align} 
By assuming that the functions $v_j$ and $v$ have mean value zero, we infer along with Poincar\'e's inequality that $(v_j)_j$ is a bounded sequence in $W^{1,p}(\Omega;\R^m)$, and hence, $v_j \weakto v$ in $W^{1,p}(\Omega;\R^m)$. 

The remaining argument uses standard elements of Young measure theory, for a general introduction to Young measures see e.g.~\cite[Chapter 8]{Fonseca},~\cite{Pedregal},~\cite[Chapter 4]{Rindler}. After passing to subsequences (not relabeled), the gradients $(\nabla v_j)_j$ generate a Young measure $\{\mu_x\}_{x \in \Omega}$, and $((u_j,\nabla v_j))_j\subset L^p(\Omega)\times L^p(\Omega;\R^{m\times n})$ generates the parametrized product measure $\{\delta_{u(x)} \otimes \mu_x\}_{x \in \Omega}$, where $\delta_z$ denotes the Dirac measure centered in $z\in \R^m$, cf.~e.g.~\cite[Corollary 8.10]{Fonseca}. Then the fundamental theorem of Young measures gives
\begin{align*}
\liminf_{j \to \infty} \int_{\Omega} f(x,u_j,\nabla v_j)\dx &\geq \int_{\Omega}\int_{\Rmn}f(x,u,A)\,d{\mu_x(A)}\dx. 
\end{align*} 
Based on the well-known characterization of gradient Young measures in \cite{KiP91, KiP94}, see also~\cite[Theorem 7.15]{Rindler},
we can now exploit the quasiconvexity of $f$ in its third argument with $p$-growth to obtain the Jensen-type inequality 
 \begin{align*}
\int_{\R^{m\times n}} f(x, u(x), A) \, d\mu_x(A) \geq f\Bigl(x, u(x),  \int_{\R^{m\times n}} A\, d\mu_x(A)\Bigr)= f(x, u(x),\nabla v(x)) \quad
\end{align*}
 for a.e.~$x\in \Omega$, 
where we have used that the barycenter of $\mu_x$ corresponds to $\nabla v(x)$. In view of~\eqref{vjuj} we see that \eqref{estsuff} follows.

For any $\Omega'\subset \R^n$ with $\Omega \Subset \Omega'$, we have by Lemma \ref{le:strongoutside} that $\na u_j \to \na u$ in $L^p((\Omega')^c;\Rmn)$. Hence, by a standard lower semicontinuity result for strong $L^p$-convergence (see~\cite[Theorem 6.49]{Fonseca}),
\begin{align*} 
\int_{(\Omega')^c} f(x,u,\na  u)\dx &\leq \liminf_{j \to \infty} \int_{(\Omega')^c}f(x,u_j,\na u_j)\dx\\ & \leq\liminf_{j \to \infty} \int_{\Omega^c}f(x,u_j,\na u_j)\dx,
\end{align*}
so that by letting $\Omega'$ tend to $\Omega$, the monotone convergence theorem in combination with the non-negativity of $f$ implies \eqref{estsuff2}. Finally, the stated weak lower semicontinuity of $\F_{\alpha}$ follows from the combination of~\eqref{estsuff} and~\eqref{estsuff2}.
\end{proof}

\begin{remark}
The bounds 
on the integrand function $f$ in Theorem~\ref{th:sufficientcomplementary} can be weakened,
with slightly different conditions on $\Omega$ and its complement. On $\Omega$, we may require as a  lower bound that for some $q\in [1, p)$,
\[
-a(x)-C(\abs{z}^p+\abs{A}^q) \leq f(x,z,A) \quad \text{for a.e.~$x\in \Omega$ and for all} \ (z, A)\in \R^m\times \Rmn
\]
with $a \in L^1(\Omega)$ and a constant $C>0$; 
indeed, this ensures that the negative part of the sequence $\bigl(f(\cdot ,u_j,\na u_j)\bigr)_j$ is equi-integrable, given the convergence of $(u_j)_j$ in $L^p(\Omega;\R^m)$ and the boundedness of $\na u_j$ in $L^p(\Omega;\Rmn)$, and
 hence, the fundamental theorem of Young measures applies in the same manner.
On $\Omega^c$, we may lose the upper bound entirely,
and one can use the lower bound
\[
-b(x)-C(\abs{z}^p+\abs{A}^p) \leq f(x,z,A) \quad \text{for a.e.~$x\in \Omega^c$ and for all} \ (z, A)\in \R^m\times \Rmn
\]
with $b \in L^1(\Omega^c)$ and $C>0$, considering that \cite[Theorem 6.49]{Fonseca} holds under this assumption.
\end{remark}
In preparation for the necessary condition, we construct functions with compact support whose fractional gradients take a certain given value at a point. This tool serves us as a replacement for the locally affine functions, which are commonly employed in the classical theory.

\begin{lemma}
\label{le:construction}
Let $\alpha \in (0,1)$ and let $\Omega \subset \R^n$ be open and bounded. For any $x_0\in \Omega$, $z \in \R^m$ and $A \in \Rmn$, there exists a $\phi \in C_{c}^{\infty}(\Omega;\R^m)$ such that $\phi(x_0)=z$ and $\na \phi(x_0)=A$.
\end{lemma}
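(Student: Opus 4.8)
The plan is to build $\phi$ explicitly and to exploit parity properties of the singular integral defining $\na$ evaluated at the point $x_0$. Since $\Omega$ is open with $x_0\in\Omega$, fix $r>0$ such that $B(x_0,r)\subset\Omega$; every building block will be supported in $B(x_0,r)$. Writing the fractional gradient at $x_0$ through the substitution $h=y-x_0$,
\[
\na\phi(x_0)=\mu_{n,\alpha}\int_{\R^n}\frac{\phi(x_0+h)-\phi(x_0)}{\abs{h}^{n+\alpha}}\,\frac{h}{\abs{h}}\dd h,
\]
the crucial observation is: if $h\mapsto\phi(x_0+h)$ is even then $\na\phi(x_0)=0$, because the integrand is odd; whereas if $h\mapsto\phi(x_0+h)$ is odd in the single variable $h_j$ and nonnegative for $h_j>0$, then $\na\phi(x_0)$ is a positive multiple of $e_j$. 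This observation replaces the classical device of truncating an affine map, which is unavailable here because $\na$ is nonlocal and any cut-off alters the fractional gradient everywhere.

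Concretely, I would fix a radial $\rho\in C_c^\infty(B(0,r))$ with $\rho\ge 0$ and $\rho\not\equiv 0$, and a radial $\tilde\rho\in C_c^\infty(B(0,r))$ with $\tilde\rho(0)=1$, and set for $j=1,\dots,n$
\[
\psi_j(x)=\bigl(x_j-(x_0)_j\bigr)\,\rho(x-x_0),\qquad \eta(x)=\tilde\rho(x-x_0).
\]
These functions belong to $C_c^\infty(B(x_0,r);\R)\subset C_c^\infty(\Omega)$, so their Riesz fractional gradients are well defined and finite by~\eqref{eq:interpolationbound}. Clearly $\psi_j(x_0)=0$ and $\eta(x_0)=1$. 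Since $h\mapsto\tilde\rho(h)-1$ is even, the displayed formula gives $\na\eta(x_0)=0$. Since $h\mapsto h_j\rho(h)$ is odd in $h_j$, the $k$-th component
\[
\na\psi_j(x_0)\cdot e_k=\mu_{n,\alpha}\int_{\R^n}\frac{h_j h_k\,\rho(h)}{\abs{h}^{n+\alpha+1}}\dd h
\]
vanishes for $k\ne j$ by oddness in $h_j$, and for $k=j$ the radial symmetry of $\rho$ yields the value $c:=\tfrac{\mu_{n,\alpha}}{n}\int_{\R^n}\abs{h}^{1-n-\alpha}\rho(h)\dd h$, the same for all $j$; this integral converges (near the origin the integrand is $O(\abs{h}^{1-n-\alpha})$, which is integrable since $\alpha<1$, and $\rho$ has compact support), and $c>0$ because $\mu_{n,\alpha}>0$ and $\rho$ is nonnegative and strictly positive on a set of positive measure. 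Hence $\na\psi_j(x_0)=c\,e_j$.

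Finally, for given $z=(z_1,\dots,z_m)\in\R^m$ and $A=(A_{ij})\in\Rmn$, I would define the $i$-th component of $\phi$ by $\phi_i=z_i\,\eta+\frac1c\sum_{j=1}^n A_{ij}\,\psi_j$, so that $\phi=(\phi_1,\dots,\phi_m)\in C_c^\infty(B(x_0,r);\R^m)\subset C_c^\infty(\Omega;\R^m)$. Then $\phi_i(x_0)=z_i$, hence $\phi(x_0)=z$; and since $\na$ is linear and acts rowwise on vector fields, the $i$-th row of $\na\phi(x_0)$ equals $z_i\,\na\eta(x_0)+\frac1c\sum_{j=1}^n A_{ij}\,\na\psi_j(x_0)=\sum_{j=1}^n A_{ij}e_j$, i.e.\ $\na\phi(x_0)=A$, as required. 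The only genuinely non-routine ingredient is the parity heuristic of the first paragraph; once it is in place everything reduces to elementary symmetry computations, the sole point requiring care being the convergence and strict positivity of the constant $c$.
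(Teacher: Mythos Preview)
Your proof is correct and follows essentially the same strategy as the paper's: both arguments exploit the parity of the singular integral defining $\na\phi(x_0)$ to produce compactly supported building blocks whose fractional gradient at $x_0$ is a positive multiple of a prescribed unit vector, and then combine them linearly with a radially symmetric bump (which has vanishing fractional gradient at the center) to hit the target value $z$. The only cosmetic difference is the choice of building block---you use $h_j\rho(h)$ with a radial $\rho$, while the paper uses a tensor product $\psi(x_1)\theta(x_2)\cdots\theta(x_n)$ of odd and even one-dimensional functions---but the underlying symmetry mechanism is identical.
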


\begin{proof}
Due to the translation invariance and $\alpha$-homogeneity of the fractional gradient, one may assume without loss of generality that $\Omega=B(0,1)$ and $x_0=0$. Further, we may suppose that $z=0$, 
since any radially symmetric function in $C^\infty_c(B(0,1))$ has a vanishing fractional gradient at the origin and can therefore be added afterwards to guarantee that $\varphi(0)=z$.  Arguing componentwise, we only need to consider the case $m=1$, and by the linearity of the fractional gradient it is enough to construct $\phi\in C_c^\infty(B(0,1))$ with $\phi(0)=0$ and $\na \phi(0)$ a multiple of any of the standard unit vectors $e_1, \ldots, e_n$ in $\R^n$. 

We pick $e_1\in \R^n$ (the argumentation for the unit vectors $e_2, \ldots, e_n$ is analogous), and let  $\theta, \psi \in C_{c}^{\infty}((-1,1))$ be even and odd functions, respectively, i.e., 
\begin{center}
$\theta(t)=\theta(-t)$ and $\psi(-t)=-\psi(t)$ for $t\in (-1,1)$, 
\end{center}
such that $\theta, \psi$ are supported in the interval $(-\frac{1}{2}, \frac{1}{2})$, are not identically zero and are non-negative on $(0,1)$. Now, if we define $\phi \in C_c^{\infty}(B(0,1))$ by
\[
\phi(x)=\psi(x_1)\theta(x_2)\dots\theta(x_n) \quad\text{for $x=(x_1, x_2, \ldots, x_n)\in B(0,1)$,}
\]
then $\phi(0)=0$ and by exploiting the symmetry properties of $\varphi$, 
\begin{align*}
(\na \phi(0))_1=\mu_{n,\alpha}\int_{\R^n}\frac{\phi(y)y_1}{\abs{y}^{n+\alpha+1}}\dy=2\mu_{n,\alpha}\int_{\{y\in \R^n:y_1>0\}}\frac{\phi(y)y_1}{\abs{y}^{n+\alpha+1}}\dy =: \beta >0. 
\end{align*}
On the other hand, $(\na \phi(0))_i=0$ for $i=2, \ldots, n$, since the integrand is odd with respect to the $i$th variable. Hence, we have shown that $\nabla^\alpha \varphi(0)=\beta e_1\neq 0$, as desired.
\end{proof}

\begin{remark}\label{re:construction}
The previous lemma can be extended to functions with more general complementary values. Let $\alpha\in (0, 1)$, $\Omega \subset \R^n$ open and bounded, and $g \in \Sp(\R^n;\R^m)$ with $p\in (1, \infty)$.  Then, for a.e.~$x_0 \in \Omega$ there exists a $u \in \Spgm$ such that $x_0$ is a $p$-Lebesgue point of $u$ and $\na u$ with $u(x_0)=z$ and $\na u(x_0)=A$, that is,
\[
\lim_{\rho \to 0} \rho^{-n}\int_{B(x_0,\rho)}\abs{u(x)-z}^p\dx=0 \ \ \text{and} \ \ \lim_{\rho \to 0} \rho^{-n}\int_{B(x_0,\rho)}\abs{\na u(x)-A}^p\dx=0.
\]

This follows immediately from Lemma \ref{le:construction} if we take $x_0$ to be a $p$-Lebesgue point of $g$ and $\na g$, and set $u=g+\phi$ with a function $\phi \in C_{c}^{\infty}(\Omega;\R^m)$ such that
\[
\phi(x_0)=z-g(x_0) \ \ \text{and} \ \ \na \phi(x_0)=A-\na g(x_0).
\]
\end{remark}

Now, we can complement the sufficiency statement of Theorem~\ref{th:sufficientcomplementary} by proving that quasiconvexity of the integrand is also necessary in order for $\F_{\alpha}$ as in~\eqref{eq:functional1} to be lower semicontinuous. The proof combines our new insights about the connections between gradients and fractional gradients from Section~\ref{sec:connections} with some well-established techniques in the classical calculus of variations, see e.g.~\cite{AcerbiFusco,Dacorogna}.

\begin{theorem}[Necessity of quasiconvexity]
\label{th:necessarygeneral}
Let $\alpha \in (0,1)$, $p \in (1,\infty)$, $\Omega \subset \R^n$ open and bounded, and $g \in \Sp(\R^n;\R^m)$. Suppose that $f:\R^n\times \R^m\times\Rmn \to \R$ is a Carath\'{e}odory function satisfying
\begin{align}\label{growth_ness}
\abs{f(x,z,A)} \leq a(x)+C(\abs{z}^p+\abs{A}^p) \quad \text{for a.e.~$x\in \R^n$ and for all $(z, A)\in \R^m\times \R^{m\times n}$}
\end{align}
with $a \in L^1(\R^n)$ and $C>0$. If
\[
\F_{\alpha}(u)=\int_{\R^n}f(x,u(x),\na u(x))\dx, \qquad  u\in\Spgm,
\]
is (sequentially) weakly lower semicontinuous on $\Spgm$, then $A \mapsto f(x,z,A)$ is quasiconvex for a.e. $x \in \Omega$ and all $z \in \R^m$.
\end{theorem}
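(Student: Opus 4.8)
The plan is to adapt the classical blow-up argument for the necessity of quasiconvexity (see, e.g., \cite{AcerbiFusco, Dacorogna}); the genuinely new point is that an admissible perturbation in the complementary-value space must be manufactured from a classical oscillating sequence by means of Proposition~\ref{prop:connectionriesz}\,$(ii)$ and the cut-off estimate \eqref{est_cutoff}. First I would reduce the assertion to a pointwise inequality. Fixing a countable dense subset of triples $(z,A,\phi)\in\R^m\times\Rmn\times C^\infty_{\per}(Q;\R^m)$, it suffices — by continuity of $f$ in $(z,A)$, by approximating an arbitrary $\phi\in\Winfperm$ with smooth periodic functions and invoking \eqref{growth_ness} to pass the limit inside the integral over $Q$, and by the characterization \eqref{qc2} of quasiconvexity — to show that for each such $(z,A,\phi)$,
\[
f(x_0,z,A)\le \int_Q f\big(x_0,z,A+\nabla\phi(y)\big)\dy \qquad\text{for a.e.\ }x_0\in\Omega .
\]
Using the Scorza--Dragoni theorem I fix $x_0\in\Omega$ to be simultaneously a $p$-Lebesgue point of $g$ and $\na g$ and a density point of a compact set on which $f$ is continuous; by Remark~\ref{re:construction} there is then a base function $u_0\in\Spgm$ with $u_0(x_0)=z$, $\na u_0(x_0)=A$, and $x_0$ a $p$-Lebesgue point of $u_0$ and $\na u_0$. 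This $u_0$ takes over the role of the affine competitor from the local theory.

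The heart of the matter is the construction of the perturbed sequence. Fix $\rho>0$ with $\overline{B(x_0,\rho)}\subset\Omega$ and $\delta\in(0,1)$. Let $\phi_{\per}$ be the $Q$-periodic extension of $\phi$, set $\phi_j(x)=j^{-1}\phi_{\per}(j(x-x_0))$ (so that $\nabla\phi_j(x)=\nabla\phi_{\per}(j(x-x_0))$ oscillates and $\phi_j\to0$ uniformly), and pick $\eta\in C_c^\infty(B(x_0,\rho))$ with $\eta\equiv1$ on $B(x_0,(1-\delta)\rho)$, $0\le\eta\le1$, $\Lip(\eta)\le C/(\delta\rho)$, and $\zeta\in C_c^\infty(\Omega)$ with $\zeta\equiv1$ on $\overline{B(x_0,\rho)}$, $0\le\zeta\le1$. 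Since $v_j:=\eta\phi_j\in W^{1,p}(\R^n;\R^m)$, Proposition~\ref{prop:connectionriesz}\,$(ii)$ yields $w_j^0:=\dac v_j\in\Sp(\R^n;\R^m)$ with $\na w_j^0=\nabla(\eta\phi_j)$ on $\R^n$, and the interpolation bound there gives $\norm{w_j^0}_p\le C\norm{v_j}_p^{1-\frac{1-\alpha}{2}}\norm{\nabla v_j}_p^{\frac{1-\alpha}{2}}\to0$ as $j\to\infty$, because $\norm{v_j}_p\to0$ while $\norm{\nabla v_j}_p$ stays bounded. Setting $w_j:=\zeta w_j^0$, Lemma~\ref{le:leibnizp} shows $w_j\in\Spom$, with $\norm{\na w_j-\zeta\na w_j^0}_p\le C(\zeta,n,\alpha)\norm{w_j^0}_p\to0$. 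Consequently $u_j:=u_0+w_j\in\Spgm$ satisfies $u_j\weakto u_0$ in $\Spgm$; on $B(x_0,(1-\delta)\rho)$, where $\eta\equiv\zeta\equiv1$, one has $\na u_j=\na u_0+\nabla\phi_{\per}(j(\cdot-x_0))+r_j$ with $\norm{r_j}_p\to0$, while on $\R^n\setminus B(x_0,\rho)$ one has $\na w_j^0=\nabla(\eta\phi_j)=0$, whence $w_j\to0$ and $\na w_j\to0$ strongly in $L^p$.

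With the sequence in hand, I would invoke the hypothesis $\F_\alpha(u_0)\le\liminf_{j\to\infty}\F_\alpha(u_j)$ and split $\F_\alpha(u_j)$ into integrals over $\R^n\setminus B(x_0,\rho)$, over $B(x_0,(1-\delta)\rho)$, and over the ring $B(x_0,\rho)\setminus B(x_0,(1-\delta)\rho)$. On the first set, strong $L^p$-convergence together with \eqref{growth_ness} and a Vitali argument (using $a,\abs{u_0}^p,\abs{\na u_0}^p\in L^1$) give convergence to $\int_{\R^n\setminus B(x_0,\rho)}f(x,u_0,\na u_0)\dx$; on the second set, since $\nabla\phi_{\per}(j(\cdot-x_0))$ generates the homogeneous Young measure attached to the oscillation of $\nabla\phi$ and $u_j\to u_0$, $r_j\to0$ strongly, the fundamental theorem of Young measures (with equi-integrability provided by \eqref{growth_ness}) gives convergence to $\int_{B(x_0,(1-\delta)\rho)}\int_Q f(x,u_0(x),\na u_0(x)+\nabla\phi(y))\dy\dx$; on the ring, \eqref{growth_ness} and the $L^p$-bound for $\na u_j$ there bound $\limsup_{j\to\infty}\int_{\text{ring}}\abs{f(x,u_j,\na u_j)}\dx$ by $\int_{B(x_0,\rho)\setminus B(x_0,(1-\delta)\rho)}\big[a+C\abs{u_0}^p+C(\abs{\na u_0}+\norm{\nabla\phi}_\infty)^p\big]\dx$, which vanishes as $\delta\to0$ because the ring shrinks to a null set. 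Cancelling $\int_{\R^n\setminus B(x_0,\rho)}f(x,u_0,\na u_0)\dx$ from both sides and letting $\delta\to0$ yields
\[
\int_{B(x_0,\rho)}f\big(x,u_0(x),\na u_0(x)\big)\dx\le \int_{B(x_0,\rho)}\int_Q f\big(x,u_0(x),\na u_0(x)+\nabla\phi(y)\big)\dy\dx ;
\]
dividing by $\abs{B(x_0,\rho)}$ and letting $\rho\to0$, the $p$-Lebesgue point property of $u_0,\na u_0$ at $x_0$ together with the Scorza--Dragoni continuity of $f$ produce the desired pointwise inequality, which by the initial reduction completes the proof.

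The crux — and the main obstacle — is the construction in the second paragraph. Because $\na$ is nonlocal, a perturbation cannot be simultaneously supported in $\Omega$ and have compactly supported fractional gradient, and one cannot simply prescribe $\na u_0\equiv A$ on a ball as in the local theory. The way around this is to produce the oscillation classically (where everything is local and $v_j=\eta\phi_j$ is a bona fide $W^{1,p}(\R^n;\R^m)$ function), to transport it into the fractional setting via $\dac$ using Proposition~\ref{prop:connectionriesz}\,$(ii)$ — this is exactly what preserves $\na w_j^0=\nabla v_j$ while forcing $\norm{w_j^0}_p$ to be small — and then to restore the complementary-value condition with the cut-off $\zeta$, whose Leibniz error $\na w_j-\zeta\na w_j^0$ is controlled by \eqref{est_cutoff} and disappears precisely because $\norm{w_j^0}_p\to0$. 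The remaining ingredients (Scorza--Dragoni, Young measures, Vitali's theorem, Lebesgue points) amount to standard, if somewhat lengthy, bookkeeping.
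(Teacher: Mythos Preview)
Your proposal is correct and follows essentially the same strategy as the paper: construct the oscillation classically, push it into the fractional setting via Proposition~\ref{prop:connectionfraclaplacian}\,$(ii)$, restore the complementary-value constraint with a cut-off whose Leibniz error vanishes thanks to \eqref{est_cutoff} and the smallness of $\norm{w_j^0}_p$, and then localize via Scorza--Dragoni. The paper streamlines two of your steps by first reducing to $g=0$ (so the base function can be taken in $C_c^\infty(\Omega;\R^m)$ via Lemma~\ref{le:construction}, making the blow-up step cleaner since $u$ and $\na u$ are continuous) and by testing with $\phi\in W^{1,\infty}_0(Q;\R^m)$ rather than periodic $\phi$ (so the classical oscillation $\phirho_j$ already has compact support and no inner cut-off $\eta$ or ring estimate is needed); conversely, your countable-triples reduction and direct Young-measure argument replace the paper's use of the freezing Lemma~\ref{le:freezing}, but this is the same computation in a different wrapper.
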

\begin{proof}
First, we observe that it suffices to prove the statement for the case of vanishing complementary value, that is, for $g=0$. Indeed, the weak lower semicontinuity of $\F_{\alpha}$ on $\Spgm$ is equivalent to the weak lower semicontinuity on $\Spom$ of the auxiliary functional
\[
\F_{\alpha,g}(u):=\int_{\R^n} f_g(x,u,\na u)\dx, \qquad u\in \Spom,
\]
where $f_g(x,z,A)=f(x,z+g(x),A+\na g(x))$ for a.e.~$x\in \R^n$ and for $(z, A)\in \R^m\times \Rmn$. One can check that $f_g$ is also a Carath\'{e}odory integrand that satisfies the $p$-growth condition \eqref{growth_ness}, and that 
$f(x,z,\cdot)$ is quasiconvex if and only if $f_{g}(x,z-g(x),\cdot)$ is quasiconvex.\medskip

To show that $f(x,z,\cdot)$ is quasiconvex for a.e.~$x \in \Omega$ and all $z \in \R^m$, let $(x_0, z_0, A_0)\in \Omega\times \R^m\times \Rmn$, and take a $u\in C_c^\infty(\Omega;\R^m)$ such that 
\begin{center}
$u(x_0)=z_0$ \quad and \quad  $\nabla^\alpha u(x_0)=A_0$, 
\end{center} 
according to Lemma~\ref{le:construction}. Now, we fix $\varphi\in W^{1, \infty}_0(Q;\R^m)$.  Under consideration of the translation and scaling invariance with respect to the domain of the test fields in the definition of quasiconvexity, we may assume without loss of generality that $x_0+Q\Subset \Omega$. We split the remaining proof into four steps. The second part of the proof is a modification of \cite[Lemma 3.18]{Dacorogna}, where
the key issue is to substitute affine functions by functions in the complementary-value space whose fractional gradient only attains the desired value at a single point, as constructed in Lemma~\ref{le:construction}.
\medskip

\textit{Step 1. }
Let $\rho\in (0,1)$. After extending $\phi$ periodically to $\R^n$, consider the sequence $(\phirho_j)_j \subset W^{1,p}(\R^n;\R^m)$ given by
\[
\phirho_j(x)=\begin{cases}
\displaystyle \frac{\rho}{j}\phi\Bigl(j\frac{(x-x_0)}{\rho}\Bigr) &\text{for $x \in Q_{\rho}:= x_0+ 
(0,\rho)^n$,}\\
0 &\text{otherwise, }
\end{cases} \qquad x\in \R^n.
\]
As $(\phirho_j)_j$ is bounded in $W^{1,p}(\R^n;\R^m)$ and converges uniformly to zero, we observe that
\begin{align}\label{weakphirho}
\phirho_j \weakly 0\quad \text{ in $W^{1,p}(\R^n;\R^m)$,}
\end{align}
as $j \to \infty$. Applying Proposition \ref{prop:connectionfraclaplacian}\,$(ii)$ shows for $j\in \N$ that 
\begin{align*}
\phirhotilde_j:=\dac \phirho_j \in \Sp(\R^n;\R^m) \end{align*}
satisfies $\na {\phirhotilde}_j=\nabla \phirho_j$ on $\R^n$. Moreover, we have in light of~\eqref{bound} and~\eqref{weakphirho} that the sequence $(\phirhotilde_j)_j$ is bounded in $\Sp(\R^n;\R^m)$ and satisfies in the limit $j\to \infty$ that 
\begin{align*}
{\phirhotilde}_j \weakto 0\quad \text{ in $\Sp(\R^n;\R^m)$} \qquad \text{and} \qquad {\phirhotilde}_j \to 0\quad\text{ in $L^p(\R^n;\R^m)$.}
\end{align*} 
If we take a cut-off function $\chi \in C_{c}^{\infty}(\Omega)$ with $\chi \equiv 1$ on $x_0+Q$, it follows from Lemma \ref{le:leibnizp} that, as $j\to \infty$,
\begin{align*}
u_j:=u+\chi {\phirhotilde}_j \weakto u\quad \text{in $S^{\alpha, p}_0(\Omega;\R^m)$}  
\end{align*}
and
\[
R_j^\rho:=\na u_j-\na u-\chi \na \phirhotilde_j \to 0 \qquad \text{in $L^p(\R^n;\Rmn)$. }
\]
Besides, we observe that $\chi\na \phirhotilde_j=\nabla \phirho_j$ on $\R^n$, as $\nabla\phirho_j=0$ outside of $Q_{\rho}$.  

Then, by the weak lower semicontinuity of $\F_{\alpha}$ on $S^{\alpha, p}_0(\Omega;\R^m)$ and with the help of Lemma \ref{le:freezing}  and Lebesgue's dominated convergence theorem,
\begin{align*}
& \int_{\R^n}f(x,u,\na u)\dx \leq \liminf_{j \to \infty}\int_{\R^n} f(x,u_j,\na u_j)\dx\\
&\qquad = \liminf_{j \to \infty} \int_{Q_{\rho}} f(x,u+\phirhotilde_j,\na u+\nabla \phirho_j+R_j^\rho)\dx+\int_{Q_{\rho}^c}f(x,u+\chi{\phirhotilde}_j,\na u+R_j^\rho)\dx\\
&\qquad \leq \liminf_{j \to \infty} \int_{Q_{\rho}} f(x,u,\na u+\nabla \phirho_j)\dx+\int_{Q_{\rho}^c}f(x,u,\na u)\dx.
\end{align*}
If we subtract from both sides of the previous estimate the integral over $Q_{\rho}^c$, which is finite by the $p$-growth of $f$, we find
\begin{equation}
\int_{Q_{\rho}}f(x,u,\na u)\dx\leq \liminf_{j \to \infty} \int_{Q_{\rho}} f(x,u,\na u+\nabla \phirho_j)\dx.
\label{eq:cubeinequality}
\end{equation}
\medskip

\textit{Step 2. } Denoting
\[
\lambda:=\norm{u}_{L^{\infty}(\Omega;\R^n)}+\norm{\na u}_{L^{\infty}(\Omega;\Rmn)}+\norm{\nabla \phi}_{L^{\infty}(Q;\Rmn)}<\infty,
\]
we introduce the compact set
\[
S=\{(z,A) \in \R^m \times \Rmn \ | \  \abs{z}+\abs{A} \leq \lambda\}.
\]
By the Scorza-Dragoni theorem (see e.g.~\cite[Theorem~6.35]{Fonseca}), there exists a sequence of compact nested sets $K_{l} \subset \Omega$  for $l \in \N$ such that $f$ is continuous on $K_{l} \times  S$ and $\abs{\Omega \setminus K_{l}} \leq 1/l$, and Tietze's extension theorem allows us to find for each $l\in \N$ a continuous function $f_l:\Omega\times \R^m\times \Rmn\to \R$ coinciding with $f$ on $K_{l} \times S$ such that
\[
\abs{f_{l}(x,z,A)} \leq M_l:=\max\{ f(x,z,A) \ | \ (x,z,A) \in K_{l}\times S\}
\] 
for all~$x\in \Omega$ and all $(z,A) \in \R^m \times\Rmn$.  

One can even arrange for any fixed $\epsilon' >0$ that for all $l\in \N$,
\begin{equation}
\int_{\Omega \setminus K_{l}}f_{l}(x,w(x),W(x))\dx \leq \epsilon'
\label{eq:extensionbound}
\end{equation}
for any $w \in L^{\infty}(\Omega;\R^m), W \in L^{\infty}(\Omega;\Rmn)$. Indeed, this can be done by replacing $f_{l}$ by $\eta_{l} f_{l}$ with $\eta_l \in C_{c}^{\infty}(\Omega)$ such that $\eta_l \equiv 1$ on $K_{l}$, $0\leq \eta_l \leq 1$ and
\[
\int_{\Omega \setminus K_{l}} \eta_l (x)\dx \leq \frac{\epsilon'}{M_l}.
\]

Now, suppose that $x_0$ lies in $K:=\bigcup_{l\in \N} K_{l}$ and is a Lebesgue point of the indicator functions $\mathbbm{1}_{\Omega\setminus K_{l}}$ and $a \mathbbm{1}_{\Omega \setminus K_{l}}$ for all $l \in \N$, recalling that $a\in L^1(\R^n)$ from~\eqref{growth_ness}. We denote the set of all such points by $\Omega_0\subset \Omega$ and observe that the set $\Omega\setminus \Omega_0$ has zero $n$-dimensional Lebesgue-measure. 
If $l\geq l_{0}:=\min\{l\in \N: x_0\in K_l\}$, it holds that
\begin{align*}
&\lim_{\rho \to 0} \rho^{-n} \int_{Q_{\rho} \setminus K_{l}} f(x,u(x),\na u(x)+\nabla \phirho_j(x))\dx  \leq \lim_{\rho \to 0} \rho^{-n} \int_{Q_{\rho} \setminus K_{l}} a(x)+2C\lambda^p \dx \\ &\qquad \qquad \qquad = \lim_{\rho\to 0} \dashint_{Q_\rho} a\mathbbm{1}_{\Omega \setminus K_l} \, dx + 2C\lambda^p \lim_{\rho\to 0} \dashint_{Q_\rho} \mathbbm{1}_{\Omega\setminus K_{l}}\, dx=0,
\end{align*}
where the last identity exploits that $\mathbbm{1}_{\Omega\setminus K_l}(x_0)=0$ and $(a\mathbbm{1}_{\Omega\setminus K_l})(x_0)=0$. 

Hence, with $\eps>0$ given, we have shown that for $x_0\in \Omega_0$,
\begin{equation}
\rho^{-n}\int_{Q_{\rho} \setminus K_{l}} f(x,u,\na u+\nabla \phirho_j)\dx < \epsilon
\label{eq:smallsetbound}
\end{equation} 
for all $l\geq l_{0}$, all $j\in \N$ and all $\rho>0$ sufficiently small.
\medskip

\textit{Step 3.} Returning to \eqref{eq:cubeinequality}, we claim that for any $x_0\in \Omega_0$, for every $l\geq l_0$, $j\in \N$ and $\rho>0$ small enough that
\begin{align*}
& \Bigl|  \int_{Q_{\rho}} f(x,u,\na u+\nabla \phirho_j)\dx - \int_{Q_{\rho}}f_{l}(x_0,z_0,A_0+\nabla \phirho_j)\dx \Bigr| \\  & \qquad\qquad\leq
\int_{Q_{\rho}}\abs{f_{l}(x_0,z_0,A_0+\nabla \phirho_j)-f_{l}(x,u,\na u+\nabla \phirho_j)}\dx\\
&\qquad\qquad\qquad\qquad  + \int_{Q_{\rho}} \abs{f_{l}(x,u,\na u+\nabla \phirho_j)-f(x,u,\na u+\nabla \phirho_j)}\dx\\ 
&\qquad\qquad  =: \ (A)+(B) < 2\eps \rho^n + \eps';
\end{align*}
indeed, since $(u(x),\na u(x)+\nabla \phirho_j(x)) \in S$ for a.e.~$x \in \Omega$, the estimate 
\[
(B) \leq \int_{Q_{\rho}\setminus K_{l}} \abs{f_{l}(x,u,\na u+\nabla \phirho_j)-f(x,u,\na u+\nabla \phirho_j)}\dx\leq \epsilon \rho^n +\epsilon',
\]
follows from \eqref{eq:extensionbound} and \eqref{eq:smallsetbound}; for the bound on (A), we use the uniform continuity of $f_{l}$ on compact sets and the continuity of $u$ and $\na u$ to deduce $(A) \leq \epsilon \rho^n$. 

Lastly, we conclude from the periodicity of $\phirho_j$ and the property that $x_0 \in K_{l}$ that
\begin{align*}
\int_{Q_{\rho}}f_{l}(x_0,z_0,A_0+\nabla \phirho_j)\dx=\rho^n\int_{Q}f_{l}(x_0,z_0,A_0+\nabla \phi)\dy=\rho^n\int_{Q}f(x_0,z_0,A_0+\nabla \phi)\dy. 
\end{align*}

\textit{Step~4.} All in all, we obtain in view of Step~2 and Step~3 that for $x_0\in \Omega_0$, so, in particular, for a.e.~$x_0\in \Omega$, and $\rho>0$ sufficiently small, 
\begin{align*}
\liminf_{j \to \infty} \int_{Q_{\rho}} f(x,u,\na u+\nabla \phi_j)\dx&\leq \rho^n \int_{Q} f(x_0,z_0,A_0+\nabla \phi)\dy + 2\epsilon\rho^n+\epsilon'\\
& \leq \rho^n\int_{Q} f(x_0,z_0,A_0+\nabla \phi)\dy  + 3\epsilon\rho^n,
\end{align*}
by choosing $\epsilon'\leq \epsilon/\rho^n$, and similar, even simpler argument, gives
\begin{align*}
\int_{Q_{\rho}}f(x,u,\na u)\dx\geq \rho^n f(x_0,z_0,A_0)-3\epsilon \rho^n .
\end{align*}
Hence, for a.e.~$x_0\in \Omega$, \eqref{eq:cubeinequality} becomes after division by $\rho^{n}$,
\[
f(x_0,z_0,A_0) \leq \int_{Q}f(x_0,z_0,A_0+\nabla \phi)\dy+6\epsilon,
\]
which by the arbitrariness of $\epsilon$ and $A_0\in \Rmn$, shows the quasiconvexity of $f(x_0, z_0, \cdot)$ and thus, finalizes the proof. 
\end{proof}

It may seem surprising at first that the weak lower semicontinuity of functionals involving fractional gradients can be characterized via a notion of convexity that is intrinsic to the classical setting. To enlarge upon this point, we introduce and analyze the following new type of convexity, which appears to be the natural translation of quasiconvexity into the fractional context.
\begin{definition}[{$\alpha$}-quasiconvexity]\label{def:alphaqc}
Let $\alpha \in (0,1)$. We say that a Borel measurable function $h: \Rmn \to \R$ is $\alpha$-quasiconvex if it holds for any $A \in \Rmn$ that
\[
h(A)\leq \int_{Q}h(A+\na \phi)\dy \qquad \text{ for all $\phi \in S^{\alpha,\infty}_{per}(Q;\R^m)$.}
\]
\end{definition}

Let us briefly comment on a few aspects regarding alternative definitions of $\alpha$-quasiconvexity.

\begin{remark}
a) Notice that, in analogy to quasiconvexity, one could replace the unit cube $Q$ in the definition of $\alpha$-quasiconvexity through a translation and scaling argument with any 
other open cube $Q'\subset \R^n$ and test with $Q'$-periodic functions in $S^{\alpha, \infty}_{per}(Q';\R^m)$ instead.\medskip

b) If $h:\Rmn\to \R$ is continuous, we observe that $h$ is $\alpha$-quasiconvexity if and only if
\begin{equation*}
h(A)\leq \int_{Q}h(A+\na \phi)\dy \quad\text{for all $\phi \in C^{\infty}_{per}(Q;\R^m)$}
\end{equation*}
for any $A\in \Rmn$.
This follows from Lebesgue's dominated convergence theorem, since every $\phi \in \Sinfperm$ can be approximated via standard mollification by a sequence $(\phi_j)_j \subset C^{\infty}_{per}(Q;\R^m)$ in the sense that $\na \phi_j \to \na \phi$ in $L^1(Q;\Rmn)$ and $\norm{\na \phi_j}_{\infty} \leq \norm{\na \phi}_{\infty}$ for all $j\in \N$, cf.~also \cite[Theorem 3.22]{Comi1}.   \medskip

c) As already mentioned above, it is well-known that quasiconvexity can be equivalently defined via test fields $W^{1,\infty}_0(Q;\R^m)$ or $\Winfperm$, cf.~\eqref{qc1} and~\eqref{qc2}. Therefore, one might wonder whether for a Borel measurable function $h:\Rmn\to \R$ the property that for any $A\in \Rmn$,
\begin{equation}
h(A)\leq \int_{Q}h(A+\na \phi)\dy \quad\text{for all $\phi \in S^{\alpha,\infty}_{0}(Q;\R^m)$,}
\label{eq:badquasiconvexity}
\end{equation}
constitutes a meaningful alternative generalization of quasiconvexity to the fractional setting.
However, this is not the case, since this notion
fails to generalize classical convexity even for $n=m=1$.
In fact, we show below that one can find $\phi \in C_{c}^{\infty}((0,1))$ with
\begin{equation}\label{eq:nonzeromean}
(\na \phi)_{(0,1)}:=\int_0^1 \na \phi\dy\not= 0,
\end{equation}
which implies, in particular, that no linear function $h:\R \to \R$ with $h((\na\phi)_{(0,1)}) <0$ satisfies~\eqref{eq:badquasiconvexity}, because
\[
h(0)=0>h((\na\phi)_{(0,1)})=\int_{0}^1h(\na \phi)\dy.
\]
To see \eqref{eq:nonzeromean}, choose $\tilde{\phi} \in C_c^{\infty}((0,1))$ to be non-negative and not identically zero. Then, $\na \tilde{\phi}(0)>0$, so that, by the continuity of $\na \tilde{\phi}$, there is some $\delta >0$ with
\[
\int_{-\delta}^1 \na \tilde{\phi}\dy \not =0.
\]
Transforming $\tilde \varphi$ on the interval $(-\delta, 1)$ to a function $\varphi\in C_c^\infty((0,1))$, under consideration of the translation invariance and $\alpha$-homogeneity of the fractional gradient, yields~\eqref{eq:nonzeromean}.  
\end{remark}

In fact, the newly established $\alpha$-quasiconvexity
is equivalent to the classical quasiconvexity for any $\alpha\in (0,1)$ as an immediate consequence of Proposition \ref{prop:periodicconnection}, and  provides therefore, another way to characterize the weak lower semicontinuity of the functionals $\F_{\alpha}$ in~\eqref{eq:functional1}.
\begin{corollary}\label{cor:equivalence}
Let $\alpha \in (0,1)$ and $h:\Rmn \to \R$ a Borel function. Then, $h$ is quasiconvex if and only if $h$ is $\alpha$-quasiconvex.
\end{corollary}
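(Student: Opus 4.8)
The plan is to deduce the corollary directly from the identification of periodic gradients with periodic fractional gradients established in Proposition~\ref{prop:periodicconnection}, combined with the characterization of quasiconvexity via $Q$-periodic test fields recalled in~\eqref{qc2}. Both notions are Jensen-type inequalities of the form $h(A)\leq\int_Q h(A+G)\dy$ in which the competitor field $G$ ranges over a prescribed set; once these two sets are seen to coincide, the equivalence is immediate and, in particular, manifestly independent of~$\alpha$.

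First I would introduce the two classes of admissible fields,
\[
\Gcal := \{ \nabla \phi : \phi \in \Winfperm \}, \qquad \Gcal_\alpha := \{ \na \phi : \phi \in \Sinfperm \},
\]
both regarded as subsets of $L^\infty(\R^n;\Rmn)$ consisting of $Q$-periodic fields, and claim that $\Gcal=\Gcal_\alpha$. For the inclusion $\Gcal\subset\Gcal_\alpha$, take $\phi\in\Winfperm$ and apply Proposition~\ref{prop:periodicconnection} to each of the $m$ scalar components of $\phi$; this yields $u\in\Sinfperm$ with $\na u=\nabla\phi$ on $\R^n$, so $\nabla\phi\in\Gcal_\alpha$. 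The reverse inclusion $\Gcal_\alpha\subset\Gcal$ follows in exactly the same way from the second half of Proposition~\ref{prop:periodicconnection}: given $\phi\in\Sinfperm$, there is (componentwise) a $v\in\Winfperm$ with $\nabla v=\na\phi$ on $\R^n$.

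With this set equality in hand, the corollary is immediate. Fix a Borel function $h:\Rmn\to\R$. By the periodic reformulation~\eqref{qc2}, $h$ is quasiconvex if and only if $h(A)\leq\int_Q h(A+G)\dy$ for every $A\in\Rmn$ and every $G\in\Gcal$; by Definition~\ref{def:alphaqc}, $h$ is $\alpha$-quasiconvex if and only if the same inequality holds for every $A\in\Rmn$ and every $G\in\Gcal_\alpha$ (the integral over $Q$ being unambiguous since each such $G$ is $Q$-periodic). Since $\Gcal=\Gcal_\alpha$, the two conditions are literally the same, which proves the statement for every $\alpha\in(0,1)$. I do not anticipate a genuine obstacle: all the analytic content — the construction of a $Q$-periodic $S^{\alpha,\infty}$ companion of a periodic Lipschitz map via the fractional Laplacian, and of a periodic Lipschitz companion of a periodic $S^{\alpha,\infty}$ map via Proposition~\ref{prop:connectionriesz}\,$(i)$ together with the mean-value argument — has already been carried out in Proposition~\ref{prop:periodicconnection}; the only care needed is the routine componentwise reduction to its scalar statement and the use of the periodic form of quasiconvexity.
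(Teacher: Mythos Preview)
Your argument is correct and is exactly the approach the paper takes: the corollary is stated there as an immediate consequence of Proposition~\ref{prop:periodicconnection}, and you have simply spelled out the details by identifying the periodic test-field classes $\Gcal$ and $\Gcal_\alpha$ and invoking the periodic form~\eqref{qc2} of quasiconvexity.
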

With the previous results at hand, a straightforward application of the direct method in the calculus of variations provides the following existence result for integral functionals involving fractional gradients.
\begin{theorem}[Existence of minimizers]
Suppose the setting of Theorem \ref{th:sufficientcomplementary}, in particular, $\R^{m\times n}\ni A\mapsto f(x, z, A)$ is quasiconvex for a.e.~$x\in \Omega$ and all $z\in \R^m$, and assume further that the Carath\'eodory integrand $f$ satisfies the coercivity condition
\[
c\abs{A}^p-b(x) \leq f(x,z,A) \qquad \text{for a.e.~$x\in \R^n$ and $(z,A) \in\times\R^m\times \Rmn$}
\]
with a constant $c>0$ and $b \in L^1(\R^n)$. Then, the functional $\F_{\alpha}$ admits a minimizer in $\Spgm$.
\end{theorem}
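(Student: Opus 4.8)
The plan is to apply the direct method of the calculus of variations, using the compactness of complementary-value spaces from Theorem~\ref{th:compactness} and the weak lower semicontinuity established in Theorem~\ref{th:sufficientcomplementary}. First I would check that the infimum $m:=\inf_{u\in\Spgm}\F_\alpha(u)$ is finite: the nonnegativity of $f$ gives $m\geq 0$, while the upper growth bound together with $g\in\Sp(\R^n;\R^m)$ shows $\F_\alpha(g)\leq \norm{a}_1+C(\norm{g}_p^p+\norm{\na g}_p^p)<\infty$, so that $m<\infty$. Accordingly, fix a minimizing sequence $(u_j)_j\subset\Spgm$ with $\F_\alpha(u_j)\to m$.

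Next I would turn the coercivity hypothesis into a uniform bound in $\Sp(\R^n;\R^m)$. Integrating $c\abs{A}^p-b(x)\leq f(x,z,A)$ with $A=\na u_j(x)$ and $z=u_j(x)$ yields $c\norm{\na u_j}_p^p\leq \F_\alpha(u_j)+\norm{b}_{L^1(\R^n)}$, and since $\F_\alpha(u_j)$ is bounded this makes $(\na u_j)_j$ bounded in $L^p(\R^n;\Rmn)$. The fractional Poincar\'e inequality of Theorem~\ref{th:poincare} then gives $\norm{u_j}_{L^p(\Omega)}\leq C\norm{\na u_j}_p$, and because $u_j=g$ a.e.\ on $\Omega^c$ we have $\norm{u_j}_{L^p(\Omega^c)}=\norm{g}_{L^p(\Omega^c)}$ fixed; hence $(u_j)_j$ is bounded in $\Sp(\R^n;\R^m)$, and it clearly stays in $\Spgm$.

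By Theorem~\ref{th:compactness}, after passing to a (non-relabeled) subsequence there is a limit $u\in\Spgm$ with $u_j\to u$ in $L^p(\R^n;\R^m)$ and $\na u_j\weakto\na u$ in $L^p(\R^n;\Rmn)$, i.e.\ $u_j\weakto u$ in $\Spgm$. Since $f$ is in particular a normal integrand with $f\geq 0$, satisfies the prescribed $p$-growth from above, and $A\mapsto f(x,z,A)$ is quasiconvex for a.e.\ $x\in\Omega$, Theorem~\ref{th:sufficientcomplementary} applies and yields $\F_\alpha(u)\leq\liminf_{j\to\infty}\F_\alpha(u_j)=m$. As $u\in\Spgm$ forces $\F_\alpha(u)\geq m$, we conclude $\F_\alpha(u)=m$, so $u$ is the sought minimizer. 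The argument is essentially routine once the preliminary results are in hand; the only step requiring a little care is that the coercivity bound yields control of the full $\Sp$-norm rather than merely of the fractional gradient, which is precisely where the fractional Poincar\'e inequality and the fixed complementary values enter.
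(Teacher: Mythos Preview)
Your proof is correct and follows essentially the same route as the paper's own proof: minimizing sequence, coercivity to bound $\norm{\na u_j}_p$, fractional Poincar\'e (Theorem~\ref{th:poincare}) plus the fixed complementary values to bound $\norm{u_j}_p$, compactness via Theorem~\ref{th:compactness}, and weak lower semicontinuity from Theorem~\ref{th:sufficientcomplementary}. The paper's version is simply terser and omits your explicit check that $m$ is finite and the splitting of the $L^p$-norm over $\Omega$ and $\Omega^c$.
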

\begin{proof}
Let $(u_j)_j \subset \Spgm$ be a minimizing sequence of $\F_{\alpha}$, then the coercivity condition yields that $\sup_{j\in \N}\norm{\na u_j}_p  <\infty$ for all $j\in \N$. Along with the fractional Poincar\'{e} inequality in Theorem \ref{th:poincare}, we infer that $(u_j)_j$ is a bounded sequence in $\Spgm$, so that by Theorem~\ref{th:compactness}, up to the selection of a non-relabeled subsequence, $u_j \weakto u$ in $\Spgm$ for some $u\in \Spgm$. The weak lower semicontinuity of $\F_{\alpha}$ from Theorem \ref{th:sufficientcomplementary} finally shows that $u$ is indeed a minimizer of $\F_{\alpha}$ over $\Spgm$.
\end{proof}
To close this section, we state and briefly prove for the readers' convenience the following well-known tool, which has been used  
in the proof of Theorem~\ref{th:necessarygeneral} and will be exploited once again in the next section. 
\begin{lemma}
\label{le:freezing}
Let $U \subset \R^n$ open and bounded and $f:U \times \R^m \times \Rmn \to \R$ be Carath\'eodory and
\[
\abs{f(x,z, A)} \leq a(x) + C(\abs{z}^p+\abs{A}^p) \quad \text{for a.e.~$x\in U$ and for all $(z, A)\in \R^m\times \Rmn$}
\]
with $a \in L^1(U)$ and a constant $C>0$. If $u_j \to u$ in $L^p(U;\R^m)$, $v_j \to v$ in $L^p(U;\Rmn)$ and $(w_j)_j$ is a bounded sequence in $L^p(U;\Rmn)$ that is $p$-equi-integrable, then
\[
\liminf_{j \to \infty} \int_{U} f(x,u_j,w_j+v_j)\dx\leq\liminf_{j \to \infty} \int_{U} f(x,u,w_j+v)\dx.
\]
\end{lemma}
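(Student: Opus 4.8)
The plan is to reduce the whole statement to the claim that the \emph{freezing error}
\[
\Delta_j:=f(\cdot,u_j,w_j+v_j)-f(\cdot,u,w_j+v)
\]
converges to $0$ in $L^1(U)$. Granting this, the assertion follows from the elementary fact that $\liminf_j(a_j+b_j)=\liminf_j a_j$ whenever $b_j\to 0$, applied with $a_j=\int_U f(x,u,w_j+v)\dx$ and $b_j=\int_U\Delta_j\dx$; note that the $p$-growth of $f$ together with the boundedness of $U$ ensures that all integrands appearing here belong to $L^1(U)$, so this splitting is legitimate (and in fact yields equality of the two $\liminf$'s, which is stronger than what is claimed).

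To prove $\int_U|\Delta_j|\dx\to 0$ I would argue by subsequences, so that it suffices to extract, from an arbitrary subsequence, a further subsequence along which the integral vanishes. After passing to such a subsequence (not relabeled), using $u_j\to u$ and $v_j\to v$ in $L^p(U)$, we may assume $u_j\to u$ and $v_j\to v$ pointwise a.e.\ and $|u_j|\le\phi$, $|v_j|\le\psi$ for fixed $\phi,\psi\in L^p(U)$. For a threshold $M>0$ write $E_j^M=\{\,x\in U:|w_j(x)|>M\,\}$ and split $\int_U|\Delta_j|\dx$ over $E_j^M$ and its complement. On $E_j^M$ the $p$-growth bound gives $|\Delta_j|\le 2a+C(|u_j|^p+|u|^p)+C2^{p-1}(2|w_j|^p+|v_j|^p+|v|^p)$; since $|E_j^M|\le M^{-p}\sup_j\norm{w_j}_p^p\to0$ as $M\to\infty$, all terms except the one containing $|w_j|^p$ have arbitrarily small integral over $E_j^M$ by absolute continuity of the Lebesgue integral (using the fixed dominants $a$, $\phi^p$, $\psi^p$, $|u|^p$, $|v|^p$), whereas $\sup_j\int_{E_j^M}|w_j|^p\dx$ is small uniformly in $j$ precisely because $(w_j)_j$ is $p$-equi-integrable. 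Hence, given $\epsilon>0$, a single $M$ can be chosen with $\sup_j\int_{E_j^M}|\Delta_j|\dx<\epsilon$.

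On the complementary set $\{\,|w_j|\le M\,\}$ I would invoke dominated convergence. For a.e.\ $x$ the map $f(x,\cdot,\cdot)$ is continuous, hence uniformly continuous on the compact set $\overline{B}(0,R(x))\times\overline{B}(0,M+R'(x))$, where $R(x),R'(x)<\infty$ bound the convergent sequences $(u_j(x))_j$, $(v_j(x))_j$ and their limits; combined with $u_j(x)\to u(x)$ and $v_j(x)\to v(x)$, this yields $\bigl|f(x,u_j(x),w+v_j(x))-f(x,u(x),w+v(x))\bigr|\to0$ \emph{uniformly over all $w$ with $|w|\le M$}, in particular for the choice $w=w_j(x)$ on $\{|w_j|\le M\}$. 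Thus $|\Delta_j|\,\mathbbm{1}_{\{|w_j|\le M\}}\to0$ a.e., and on $\{|w_j|\le M\}$ it is dominated by the fixed function $2a+C(\phi^p+|u|^p)+C2^{p-1}(2M^p+\psi^p+|v|^p)\in L^1(U)$ (here the boundedness of $U$ makes the constant $M^p$ integrable), so $\int_{\{|w_j|\le M\}}|\Delta_j|\dx\to0$ by Lebesgue's theorem. Combining the two parts, $\limsup_j\int_U|\Delta_j|\dx\le\epsilon$ for every $\epsilon>0$, which completes the proof.

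The main obstacle, and the only genuinely non-routine point, is controlling $|\Delta_j|$ on the set where $w_j$ is large: this is exactly where the $p$-equi-integrability of $(w_j)_j$ is indispensable, and it is what forces the subsequence-plus-domination bookkeeping. The remaining ingredients (absolute continuity of the integral, uniform continuity on compact sets, dominated convergence) are entirely standard.
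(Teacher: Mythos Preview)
Your proof is correct and in fact establishes the stronger statement $\int_U|\Delta_j|\dx\to 0$, hence equality of the two $\liminf$'s. The route, however, differs from the paper's. The paper argues via Young measures: after passing to a subsequence along which the right-hand $\liminf$ is attained, $(w_j)_j$ generates a Young measure $\{\mu_x\}_{x\in U}$, and $u_j\to u$ pointwise a.e., the fundamental theorem of Young measures (together with the $p$-equi-integrability of $(w_j)_j$) identifies both $\lim_j\int_U f(x,u_j,w_j+v_j)\dx$ and $\lim_j\int_U f(x,u,w_j+v)\dx$ with the common value $\int_U\int_{\Rmn} f(x,u,A+v)\,d\mu_x(A)\dx$. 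Your argument avoids Young measures altogether, replacing them with the large/small splitting on $|w_j|$ and the uniform-continuity-on-compacts observation that makes the ``small'' part amenable to dominated convergence. Your approach is more elementary and self-contained; the paper's is shorter once the Young measure machinery is taken as a black box, and it gives a slightly cleaner explanation of why $p$-equi-integrability is exactly the right hypothesis (it is what turns the Young-measure lower bound into an equality).
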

\begin{proof}
By choosing subsequences (not relabeled), we may assume that
\[
\liminf_{j\to \infty}\int_{U} f(x,u,w_j+v)\dx=\lim_{j \to \infty} \int_{U} f(x,u,w_j+v)\dx,
\]
that $(w_j)_j$ generates a Young measure $\{\mu_x\}_{x \in U}$, and  that $u_j \to u$ pointwise a.e.~in $U$. Under consideration of  the $p$-equi-integrability of $(w_j)_j$, it follows then from the fundamental theorem for Young measures (see e.g.~\cite[Theorem 8.6\,(ii), Corollary~8.7, Theorem~8.10]{Fonseca}) that
\[
\lim_{j \to \infty}\int_{U}f(x,u_j,w_j+v_j)=\int_{U}\int_{\Rmn}f(x,u,A+v)\,d{\mu_x(A)}\dx=\lim_{j \to \infty}\int_{U} f(x,u,w_j+v)\dx.
\]
\end{proof}

\section{Relaxation}\label{sec:relaxation}
Finally, we establish the relaxation formula
stated in Theorem \ref{theo:relaxation}. For simplicity, we drop the explicit $x$- and $u$-dependence of the integrand, which, at the same time, helps to illustrate the
structural change of the integral functional induced by the relaxation process, namely from a homogeneous to an inhomogeneous integrand.
The idea of the proof is to reduce to a classical relaxation result by exploiting the tools developed in Section \ref{sec:connections}.

Recall that the relaxation of $\F_{\alpha}$ as in \eqref{eq:integralfunctional2} refers to the largest weakly lower semicontinuous functional that lies below $\F_{\alpha}$, 
which can be represented as
\[
\Frel(u)= \inf \{ \liminf_{j \to \infty} \F_{\alpha}(u_j) \ | \ u_j \weakto u \ \text{in} \ \Spgm\} \quad\text{for $u \in \Spgm$}.
\]
According to the results in Section \ref{sec:lsc}, the weak lower semicontinuity of functionals as in \eqref{eq:integralfunctional2} is equivalent to the quasiconvexity of the integrand $f$, which makes quasiconvexification a natural ingredient in the relaxation process. 
The quasiconvex envelope of $f$, i.e., the largest quasiconvex function below $f$, is given by
\[
f^{\rm qc}(A)=\sup \{g(A)  :  g:\Rmn \to \R \ \text{quasiconvex}, \ g \leq f\} ,
\]
or equivalently, characterized as
\[
f^{\rm qc}(A)=\inf_{\phi \in  W^{1,\infty}_0(Q;\R^m)}\int_{Q}f(A+\nabla \phi)\dy
\] 
for $A \in \Rmn$,
see \cite[Theorem 6.9]{Dacorogna}, \cite[Section 7.1]{Rindler}. 
We now proceed with the proof of the relaxation result.

\begin{proof}[Proof of Theorem \ref{theo:relaxation}]
In the following, let
\begin{align*}
\tilde{\F}_{\alpha}(u)=\int_{\Omega} f^{\rm qc}(\na u)\dx+\int_{\Omega^c}f(\na u)\dx = \int_{\R^n}\tilde f(x,\na u)\dx, \quad u\in S^{\alpha,p}_g(\Omega;\R^m),
\end{align*}
with $\tilde f: \R^n\times \R^{m\times n}\to \R$ given by $\tilde f(x, A)=\mathbbm{1}_{\Omega}(x) f^{\rm qc}(A)+ \mathbbm{1}_{\Omega^c}(x) f(A)$ for $(x, A)\in \R^n\times \R^{m\times n}$. We observe that, since the quasiconvexification $f^{\rm qc}$ is continuous and inherits the growth assumptions of $f$, the functional $\tilde{\F}_{\alpha}$ satisfies the requirements of Theorem~\ref{th:sufficientcomplementary} and is therefore weakly lower semicontinuous on $\Spgm$. In view of $\tilde{\F}_{\alpha} \leq \F_{\alpha}$, we therefore conclude $\tilde{\F}_{\alpha} \leq \Frel$.\medskip

To prove the converse inequality, take $u \in \Spgm$. By Proposition~\ref{prop:connectionriesz}\,$(i)$, there is a $v \in W^{1,p}(\Omega;\R^m)$ such that $\nabla v= \na u$ on $\Omega$. According to a classical relaxation result (see e.g.~\cite[Theorem 7.6]{Rindler}, \cite[Theorem 9.1]{Dacorogna}) we find a sequence $(v_j)_j\subset W^{1,p}(\Omega;\R^m)$ with $v_j=v$ on $\partial \Omega$ in the sense of traces for all $j\in \N$ such that 
$v_j \weakto v$ in $W^{1,p}(\Omega;\R^m)$ 
and
\begin{align}\label{classicalrelax}
\liminf_{j \to \infty} \int_{\Omega} f(\nabla v_j)\dx \leq \int_{\Omega} f^{\rm qc}(\nabla v)\dx.
\end{align}
Further, a well-known decomposition lemma (see e.g.~\cite[Lemma 8.15]{Dacorogna} or \cite[Lemma 11.4.1]{AMB14}) allows us to assume that the sequence $(\nabla v_j)_j$ is $p$-equi-integrable; a cut-off argument in the spirit of~\cite[Lemma 4.13, Step 3]{Rindler} shows that the boundary conditions can be preserved.

For each $j\in \N$, we extend $v_j-v$ trivially by zero outside of $\Omega$ to a function in $W^{1,p}(\R^n;\R^m)$ and 
define 
\begin{align*}
\tilde{u}_j:=\dac (v_j-v) \in \Sp(\R^n;\R^m). 
\end{align*} 
Observe that the sequence $(v_j-v)_j$ converges weakly to zero in $W^{1,p}(\R^n;\R^m)$. Therefore, we deduce from  Proposition~\ref{prop:connectionfraclaplacian}\,$(ii)$ that the sequence $(\tilde u_j)_j$ is 
bounded in $\Spmn$ and that \begin{align}\label{conv2}
\tilde{u}_j \weakto 0\quad \text{ in $\Spmn$, }
\end{align}
considering Remark~\ref{rem3}~a).
Moreover, since $v_j-v \to 0$ in $L^p(\R^n;\R^m)$, the estimate~\eqref{bound} yields
\begin{align}\label{strong1}
\tilde{u}_j \to 0 \quad \text{in $L^p(\R^n;\R^m)$.}
\end{align}

Now, let $O\subset \R^n$ be compactly contained in $\Omega$ and let  $\chi \in C_c^{\infty}(\Omega)$ be a cut-off function with $0\leq \chi \leq 1$ and $\chi \equiv 1$ on $O$. We consider the sequence $(u_j)_j\subset \Spgm$ given by 
\begin{align*}
u_j:=u+\chi\tilde{u}_j,\quad j\in\N.
\end{align*} 
In light of~\eqref{strong1}, Lemma \ref{le:leibnizp} implies
\begin{align}\label{Sj}
R_j:= \nabla^\alpha u_j -\nabla^\alpha u  -\chi \nabla^\alpha \tilde u_j\to 0 \ \ \text{in} \ L^p(\R^n;\Rmn).
\end{align}
Hence, together with~\eqref{conv2}, the sequence $(u_j)_j$ converges weakly to $u$ in $S^{\alpha, p}_g(\Omega;\R^m)$.

Using Lemma \ref{le:freezing} in combination with the $p$-equi-integrability of $(\nabla v_j)_j$ and the observation that $R_j=\nabla^\alpha u_j-\nabla v_j$ on $O$ for all $j\in \N$ results in
\begin{align}\label{est90}
\liminf_{j \to \infty} \int_{O} f(\nabla v_j)\dx \geq  \liminf_{j \to \infty} \int_{O} f(\nabla v_j+ R_j)\dx=\liminf_{j \to \infty} \int_{O} f(\na u_j)\dx.
\end{align}
Moreover, as $\na u_j - \na u\to 0$ in $L^p(\Omega^c;\R^m)$ again by~\eqref{Sj}, it follows from the continuity of $f$ and the dominated convergence theorem that
\begin{align}\label{est99}
\lim_{j \to \infty} \int_{\Omega^c} f(\na u_j)\dx =\int_{\Omega^c} f(\na u)\dx.
\end{align}
Lastly, let $\eps>0$. Then, we exploit the growth condition on $f$, to obtain \begin{align}\label{est87}
\int_{\Omega \setminus O}f(\na u_j)\dx \leq C\int_{\Omega \setminus O} \abs{(1-\chi)\nabla v+\chi\nabla v_j+R_j}^p\dx\leq \epsilon
\end{align}
for all $j\in \N$, if $|\Omega\setminus O|$ is sufficiently small. Here, once again, the $p$-equi-integrability of $(\nabla v_j)_j$ has been used. 

Summing up, we obtain from~\eqref{est90},~\eqref{est99} and~\eqref{est87} in combination with~\eqref{classicalrelax} that
\begin{align*}
\liminf_{j \to \infty}\F_{\alpha}(u_j)&\leq \liminf_{j \to \infty}\int_{O} f(\nabla v_j)\dx+\epsilon+\int_{\Omega^c}f(\na u)\dx \\ & \leq \int_{\Omega}f^{\rm qc}(\nabla v)\dx+\epsilon+\int_{\Omega^c}f(\na u)\dx \\  & =\int_{\Omega}f^{\rm qc}(\na u)\dx+\int_{\Omega^c}f(\na u)\dx + \eps=\tilde{\F}_{\alpha}(u)+\epsilon.
\end{align*}
The statement follows if we take the limit $\abs{\Omega\setminus O} \to 0$, and thus, $\epsilon \to 0$. 
\end{proof}

\begin{remark}
Alternatively, the statement about the necessary condition in Theorem~\ref{th:necessarygeneral} can be derived from 
Theorem ~\ref{theo:relaxation}, in the special case when the $x$- and $z$-dependence of $f$ is dropped and the additional coercivity assumption $f(A) \geq c\abs{A}^p$ for all $A\in \R^{m\times n}$ with a constant $c>0$ holds. 

Observe that if the functional $\F_{\alpha}$ is (sequentially) weakly lower semicontinuous on $\Spgm$, then it needs to coincide with its relaxation, that is, $\F_{\alpha}(u)=\Frel(u)$ for all $u\in \Spgm$.  
We subtract the identical integral terms over the complement of $\Omega$, which are finite due to the $p$-growth of $f$, to conclude that
\begin{align*}
\int_{\Omega}(f-f^{\rm qc})(\na u)\dx=0 \quad \text{for all $u\in \Spgm$.}
\end{align*}
Now, for given $A\in \Rmn$, we find by Remark \ref{re:construction} a specific $u\in \Spgm$ and a $p$-Lebesgue point $x_0\in \Omega$ of $\na u$ such that $\na u(x_0) = A$. 
Then, since $f^{\rm qc} \leq f$, it holds for any $\rho>0$ with $B(x_0,\rho) \subset \Omega$ that
\[
\int_{B(x_0,\rho)} (f-f^{\rm qc})(\na u)\dx=0,
\]
and after multiplication by $\rho^{-n}$, 
\begin{equation}\label{eq:comparison}
0=\rho^{-n}\int_{B(x_0,\rho)} (f-f^{\rm qc})(\na u)\dx=\int_{B(x_0,1)} (f-f^{\rm qc})(\na u(\rho x))\dx.
\end{equation}
Recalling that $x_0$ is a $p$-Lebesgue point of $\na u$ implies $\na u(\rho\,\cdot\,) \to A$ in $L^p(B(x_0,1);\Rmn)$ as $\rho \to 0$, and hence,  the right hand side of \eqref{eq:comparison} converges to $(f-f^{\rm qc})(A)$ as $\rho \to 0$, using the $p$-growth and continuity of $f$ and $f^{\rm qc}$ in combination with Lebesgue's dominated convergence theorem. This shows that $f=f^{\rm qc}$, and therefore, that $f$ is quasiconvex.
\end{remark}

\section*{Acknowledgements}
The authors would like to thank Javier Cueto for helpful comments on a preliminary version of the manuscript.
Part of this research was done while CK was affiliated with Utrecht University; in particular, CK acknowledges partial support by a Westerdijk Fellowship.

\addcontentsline{toc}{section}{\protect\numberline{}References}
\bibliographystyle{abbrv}
\bibliography{ThesisBib}
\end{document}